\newlength{\bibitemsep}\setlength{\bibitemsep}{.2\baselineskip plus .05\baselineskip minus .05\baselineskip}
\newlength{\bibparskip}\setlength{\bibparskip}{1pt}
\let\oldthebibliography\thebibliography
\renewcommand\thebibliography[1]{%
  \oldthebibliography{#1}%
  \setlength{\parskip}{\bibitemsep}%
  \setlength{\itemsep}{\bibparskip}%
}
\newcommand{\cc}{\ensuremath{\mathbb{C}}}
\newcommand{\on}[1]{\operatorname{#1}}
\newcommand{\delete}[1]{{}}
\newcommand{\Mod}{\on{\!-Mod}}
\tikzset{bullet/.style={draw,ellipse, text width = 4cm, text centered}}
\tikzset{rec/.style={draw, text width = 3.5 cm, text centered}}
\tikzset{plain/.style={->,>=stealth}}
\tikzset{
  barbarrow/.style={ 
     >={Straight Barb[left,length=5pt, width=5pt]}
  },
  strike through/.style={
    postaction=decorate,
    decoration={
      markings,
      mark=at position 0.5 with {
        \draw[-] (-3pt,-3pt)  --  (3pt, 3pt);
      }
    }
  }
}
\newcommand{\myhookrightarrow}{\raisebox{-1pt}{ \, \begin{tikzpicture}
  \draw [right hook-To] (0, 0) -- (0.6, 0);     
\end{tikzpicture} \, }}
\title[Yoneda algebras of the triplet vertex operator algebra]{Yoneda algebras of the triplet vertex operator algebra}
\author[A. CARADOT, C. JIANG]{Antoine CARADOT$^1$, Cuipo JIANG$^1$}
\address{$^1$School of Mathematical Sciences, Shanghai Jiao Tong University, Shanghai, 200240, China}
\author[Z. LIN]{Zongzhu LIN$^2$}
\address{$^2$Department of Mathematics, Kansas State University, Manhattan, KS, 66506, USA}
\date{\today}     
\dedicatory{Dedicated to Leonard L. Scott Jr. on the occasion of his 80th birthday}               
\keywords{Vertex operator algebras, Yoneda algebras, quivers, simple modules, projective modules}
\thanks{2020 {\it Mathematics Subject Classification:}
Primary 17B69, 13D03; Secondary 16G20\\
 C. Jiang is supported by NSFC grant No. 12171312}
\theoremstyle{plain}
\newtheorem{theorem}{Theorem}[section]
\newtheorem{lemma}{Lemma}[section]
\newtheorem{proposition}{Proposition}[section]
\newtheorem{definition}{Definition}[section]
\newtheorem{corollary}{Corollary}[section]
\theoremstyle{remark}
\newtheorem{remark}{Remark}[section]
\theoremstyle{remark}
\theoremstyle{remark}
\theoremstyle{remark}
\newtheoremstyle{dotless}{}{}{\itshape}{}{\bfseries}{}{ }{}
\theoremstyle{dotless}
\newtheorem*{theorem*}{Theorem}
\newtheorem*{lemma*}{Lemma}
\newtheorem*{proposition*}{Proposition}
\newtheorem*{definition*}{Definition}
\begin{document}
\maketitle

\begin{abstract} 
Given a vertex operator algebra $V$, one can construct two associative algebras, the Zhu algebra $ A(V)$ and the $C_2$-algebra $ R(V)$. This gives rise to two abelian categories $ A(V)\Mod$ and $R(V)\Mod$, in addition to the category of admissible  modules of $V$. In case $V$ is rational and $C_2$-cofinite, the category of admissible $V$-modules and the category of all $A(V)$-modules are equivalent. However, when $V$ is not rational, the connection between these two categories is unclear. The goal of this paper is to study the triplet vertex operator algebra $\mathcal{W}(p)$, as an example to compare  these  three  categories, in terms of abelian categories.  For each of these three abelian categories, we will determine the associated Ext quiver,  the Morita equivalent basic algebra, i.e., the algebra $ \on{End} (\oplus_{L\in \on{Irr}} P_L)^{op}$,  
and the Yoneda algebra $\on{Ext}^{*}(\oplus_{L\in \on{Irr}}L, \oplus_{L\in \on{Irr}}L)$. As a consequence, the category of admissible log-modules for the triplet VOA $ \mathcal W(p)$ has infinite global dimension, as do the Zhu algebra $A(\mathcal W(p))$, and the associated graded algebra $\on{gr}A(\mathcal W(p))$ which is isomorphic to $R(\mathcal W(p))$. We also describe the Koszul properties of the module categories of $ \mathcal W(p)$, $A(\mathcal W(p))$ and $\on{gr} A(\mathcal W(p))$.
 \end{abstract}

\tableofcontents

\section{Introduction}
Given a vertex operator algebra $(V, Y, \mathbf 1, \omega)$, one can attach two associative algebras. One  is the Zhu algebra $A(V)$ (cf. \cite{Zhu}) and the other is the Poisson algebra $R(V)$ (cf. \cite[\dots]{Zhu, Li, Ara}). In this paper, we compare the representation theories of the vertex operator algebra $V$ with those of the Zhu algebra $A(V)$ and the $C_2$-algebra $R(V)$.  The motivation for such comparison arises from a geometric point of view. The goal is to see what analogous roles the two associative algebras $R(V)$ and $A(V)$ play in terms of representation theory, and how much information about the representation theory of the vertex operator algebra is preserved by the representation theories of these two associative algebras. 

\delete{
One prototypical example is the universal affine vertex operator algebra $ V=V_k(\mathfrak g)$ of a simple Lie algebra $\mathfrak g$ at a non-critical level $k$. In this case, the associative algebras verify $A(V)=U(\mathfrak g)$ and $R(V)=\on{Sym}^*(\mathfrak g)$. The Zhu algebra $ A(V)$ always carries a filtered algebra structure with an increasing filtration:
 \[ \cdots \subseteq F^pA(V)\subseteq  F^{p+1}A(V)\subseteq \cdots \] 
 of vector subspaces. With respect to this filtered algebra structure, $A(V)$ is almost commutative in the sense that:
  \[ [F^pA(V), F^qA(V)]\subseteq F^{p+q-1}A(V).\] 
Thus the associated graded algebra $ \on{gr}A(V)$ is commutative and automatically carries a Poisson algebra structure with the Poisson bracket $\{ . , . \}$, which has degree $-1$.  In general there is surjective homomorphism of Poisson algebras $ R(V)\longrightarrow  \on{gr}A(V)$. However this homomorphism is not an isomorphism.  

From the deformation theory viewpoint, $A(V)$ is a deformation of $\on{gr}A(V)$. In the case of $ V=V_k(\mathfrak g)$, we have the isomorphism $R(V)=\on{gr}A(V)=\on{Sym}^*(\mathfrak g)$, the algebra of regular functions on the Poisson variety $\mathfrak g^*$. This suggests that $A(V)$ plays the role of the algebra of differential operators while $R(V)$ plays the role of function algebra of the cotangent bundle. 

Comparisons of the representation theory of $V$ and that of $A(V)$ was the main motivation in Zhu's thesis to define and study the Zhu algebra $A(V)$. For example, there is a one-to-one correspondence between the irreducible admissible representations of $V$ and the irreducible representations of $A(V)$. More functorial studies of these irreducible representations as well as specific constructions of irreducible modules involving higher Zhu algebras were carried out in \cite{Dong-Jiang} and \cite{DLM2}. The study of the fusion product were also carried out in \cite{Dong-Ren} and \cite{Huang-Yang}. Recently, Huang has packed all those higher Zhu algebras as well as those bimodules in \cite{Dong-Jiang} to form a large associative algebra in order to study the fusion product of $V$. The goal of this paper is, purely from the representation theory viewpoint, to compare certain representation theoretic invariants of these algebras.  

As we mentioned above, in the case of $V_k(\mathfrak g)$, the finite dimensional irreducible representations of the algebra $R(V)$ are in one-to-one correspondence with the points in $ \mathfrak g^*$. However, finite dimensional irreducible $A(V)$-modules are only corresponding to those linear functions on $\mathfrak h$ which restricts to positive coroots with non-negative integers. Furthermore, when $V$ is of CFT type and $C_2$-cofinite, $R(V)$ is a finite dimensional local algebra and thus has only one single irreducible representation while $A(V)$ can have many irreducible representations.
}
There are many categorical invariants of an abelian category $\mathcal{A}$ with enough projective objects such as  $A\Mod$ for $A$ an associative algebra. In addition to studying the irreducible objects in $\mathcal{A}$, one also wants to study the structures of projective indecomposables. There is a bijection between the irreducible $A$-modules and the indecomposable projective $A$-modules given by sending $L$ to its projective cover $P_L$. The Cartan invariants are the decomposition multiplicities  $[P_L : L']$, which form a matrix indexed by the irreducible modules $L$. However, there are much more indecomposable modules than projective indecomposable modules. Classifying all indecomposable modules is one the hardest problems in representation theory. An algebra $A$ is in general classified as finite type, tame type or wild type. 

One of the categorical invariants of $\mathcal{A}$ is the opposite of the endomorphism algebra $\mathcal{E}(\mathcal{A})=\on{End}_{\mathcal{A}}(P_\mathcal{A})$ with $P_\mathcal{A}=\bigoplus_{L \in \on{Irr}(\mathcal{A})}P_L$. The category of $\mathcal{E}(\mathcal{A})^{op}$-modules is then equivalent to $\mathcal{A}$. If $\mathcal{A}=A\Mod$, then $\mathcal{E}(\mathcal{A})^{op}$ is Morita equivalent to $A$. The ordinary quiver of $A$ is determined by the decompositions of $\on{Rad}(P_L)/\on{Rad}^2(P_L)$ into direct sums of indecomposable modules (if possible) (cf. \cite{Assem-Simson-Skowronski}). This quiver is one of the combinatorial invariants of the algebra $A$. However, it is not invariant under Morita equivalence (cf. \cite{Li-Lin} for further discussion). There is another categorical invariant of $\mathcal{A}$, the Ext quiver $Q_\mathcal{A}$. The vertices of $Q_\mathcal{A}$ correspond to irreducible objects and the multiplicities are determined by the dimensions of $\on{Ext}^1_\mathcal{A}(L, L')$ over the division rings $\on{End}_\mathcal{A}(L)$ and $\on{End}_\mathcal{A}(L')$ (this data determines a valued quiver or species in general \cite{Dlab-Ringel}). 

Another such invariant is the homological dual to the endomorphism algebra $\on{End}_{\mathcal{A}}(P_\mathcal{A})^{op}$, i.e., the graded algebra $\on{Ext}_\mathcal{A}^{*}(L_\mathcal{A}, L_\mathcal{A})$ where $L_\mathcal{A}= \bigoplus_{L \in \on{Irr}(\mathcal{A})} L$, with the product being the Yoneda composition of exact sequences.
This algebra can have infinitely many non-zero components and might not be commutative (even when the algebra $ A$ is commutative). The commutativity and finite generation of this algebra define an affine algebraic variety of finite type associated to the algebra, which will be investigated in a forthcoming paper. The Yoneda algebra $\on{Ext}_\mathcal{A}^{*}(L_\mathcal{A}, L_\mathcal{A})$ is an important subject of study in representation theory. The Koszul duality property of quadratic algebras is used to study the duality between the Yoneda algebra $\on{Ext}_\mathcal{A}^{*}(L_\mathcal{A}, L_\mathcal{A})$ and the endomorphism algebra $\on{End}_{\mathcal{A}}(P_\mathcal{A})^{op}$ (see \cite{CPS, Lu-Palmieri-Wu-Zhang, Madsen}).

This paper is an attempt to understand the relations between these representation theoretic invariants of these four representation categories for $V$, $A(V)$, $R(V)$, and $ \on{gr}(A(V))$ respectively. The goal is to understand what representation properties of $V$ are reflected in the representations of the attached associative algebras. In a forthcoming paper, we establish and study functors between these categories for dg vertex operator algebras. It is worth mentioning that, in \cite{CCG}, the Ext-algebra $\on{Ext}_V^*(V, V)$ has been computed for $V$ as a module in both the category of ordinary modules and the category of log-modules for various vertex operator algebras including $\mathcal W(p)$. 
\delete{
Since $R(V)$ is a graded Poisson algebra, it naturally determines an  (conical) affine Poisson variety $X_V$, which is called the associated variety of $V$. One natural question is to study the geometric property of $X_V$. Arakawa and his collaborators have described these varieties for a large classes of VOAs constructed from affine VOAs of simple Lie algebras $\mathfrak g$. These varieties are important in geometric representation theory. 
}

For a rational and $C_2$-cofinite VOA $V$, the Zhu algebra $A(V)$ is a finite dimensional semisimple algebra and it completely controls the representation theory of $V$ (in terms of admissible modules) (see \cite{Zhu, DLM1}).  In this case, the Ext quiver for both $V$ and $ A(V)$ becomes trivial (with vertices only and no arrows), and all the above representation theory invariants become trivially clear. If $V$ is of CFT type and $C_2$-cofinite,  $R(V)$ is a local finite dimensional algebra. The associated variety $X_V=\on{Spec}(R(V))$ (which is conical) is just one single point, the vertex of the cone (see \cite{Ara}). However, the representations of $R(V)$ can get very complicated and the Ext quiver for $R(V)$ can have many arrows. Although the endomorphism algebra $\on{End}_{R(V)}(P_{R(V)})^{op}=R(V)$ is finite dimensional, its homological dual, the Yoneda algebra $\on{Ext}_{R(V)}^{*}(L_{R(V)}, L_{R(V)})$, can be infinite dimensional. This algebra reflects the singularity of the vertex point in the associated variety $ X_V=\on{Spec}(R(V))$. The Yoneda algebra $\on{Ext}_{R(V)}^{*}(L_{R(V)}, L_{R(V)})$  is (graded) commutative if $X_V$ is a complete intersection with some restrictions on the degrees of the relations (see \cite{Caradot-Jiang-Lin3}). For a commutative local algebra $R$, the Yoneda algebra has been studied extensively (see \cite{Lu-Palmieri-Wu-Zhang, Madsen}).  It is still very subtle at this point what properties of the vertex operator algebra $V$ are reflected by this algebra.  Computation of these algebras for affine rational vertex operator algebras are done in \cite{Caradot-Jiang-Lin}.

\delete{We have examples at this point to see how the fusion product in the category of $V$ is related to this algebra.  }

In addition to many known examples of non rational $C_2$-cofinite VOAs, the triplet VOA $\mathcal{W}(p)$ (see \cite[\dots]{Adamovic-Milas1, Adamovic-Milas2}) is a natural model to explore. In this paper we will take this VOA as the main example to do the comparison. 

The paper is organised as follows. In Section~\ref{Section2} we clarify certain module categories for a VOA and establish certain functors between module categories of the VOA and those of the associative algebras. To do so, we need certain filtered and graded structures on a module. Thus, the category of admissible modules can be realized as the full subcategory of weak modules, which are the images of a full subcategory of graded modules.  We thus define the categories of filtered modules and graded modules, as well as the homomorphisms.  We will consider the Zhu algebra as a filtered algebra, and its modules are also filtered modules. Hence for a graded $V$-module $ M$, there is a filtered $A(V)$-module $A(M)$ which can be transformed into a graded $\on{gr}A(V)$-module $\on{gr}A(M)$. The surjective Poisson algebra homomorphism $R(V) \longrightarrow \on{gr}A(V)$ induces a functor $\on{gr}A(V)\otimes _{R(V)}-$ from the category of $R(V)$-modules to the category of $ \on{gr}A(V)$-modules. It follows that a graded $V$-module $M$ also defines a graded $R(V)$-module $R(M)$, and there is a surjective homomorphism $\on{gr}A(V)\otimes _{R(V))}R(M)\longrightarrow \on{gr}A(M)$. We could not prove that this is an isomorphism.  Logarithmic modules are needed in order to study the homological properties. The category of ordinary modules is not closed under extension and the category of logarithmic modules is a Serre subcategory of the category of weak modules. Thus the Yoneda algebra is computed in this category.

In Section~\ref{Section3}, we briefly review the Ext quiver associated to an abelian category with enough projective objects and finitely many irreducibles. We also define such an abelian category to be Koszul if the Yoneda algebra is generated by the degree $1$ part over the degree $0$ part. Section~\ref{QuiverWp} is devoted to the computation of the quiver for the triplet VOA $\mathcal{W}(p)$. The category of logarithmic $\mathcal{W}(p)$-modules as well as the projective covers are described in \cite{Nagatomo-Tsuchiya} and also in \cite[Section 7]{McRae-Yang}. This category is equivalent to the module category of the restricted quantum group $\overline{U}_q(\on{sl}_2)$ at the parameter  $q=e^{\pi i/p}$ as abelian categories using the description of the projective covers of irreducible modules as argued in \cite{Nagatomo-Tsuchiya}. Thus the Yoneda algebras as well as the Ext quivers for these two categories are isomorphic. It should be mentioned that the Yoneda algebra and the Ext quiver for  $\overline{U}_q(\on{sl}_2)$  have been determined in \cite{GSTF}. Thus the Yoneda algebra and the Ext quiver for $\mathcal{W}(p)$ can be transferred from those  in \cite{GSTF}. Since our goal is to compare different categories associated to $\mathcal{W}(p)$ and to establish functors among these, we include the computation in terms of modules for the vertex operator algebra $\mathcal W (p)$.  
 It is worth mentioning that it has been recently proved in \cite{Gannon-Negron} that the two categories are equivalent as ribbon tensor categories (up to a cocycle twist).  Although we do not use the ribbon structure in this paper, the ribbon structure on the categories also provide additional structures on the Yoneda algebra in general. This structure will be addressed in future works. 
 
Section~\ref{Section5} is devoted to computing the Ext quivers of the Zhu algebra and the associated graded algebra. In this case, $R(V)\longrightarrow \on{gr}A(V)$ is an isomorphism. In Section~\ref{Section6} we compute the Yoneda algebras of the associative algebras  $A(\mathcal W(p))$ and $\on{gr} A(\mathcal W(p))$, as well as determine the Koszul properties of the modules categories for $\mathcal W(p)$, $A(\mathcal W(p))$ and $\on{gr} A(\mathcal W(p))$.

\section{Vertex operator algebras and associated algebras}\label{Section2}
\subsection{Vertex operator algebras and their module categories}
The definitions and basic properties of vertex (operator) algebras can be found in  \cite{B, DLM1, DLM2, FHL, FLM, Lepowsky-Li, Zhu}.

\delete{\begin{definition} 
A \textbf{vertex  algebra} is a vector space $V$ over $ \cc$ equipped with a linear map (vertex operator map) 
\begin{align*}
\begin{array}{cccc}
Y(\cdot, x): & V & \longrightarrow &\on{End}(V)[[x, x^{-1}]] \\
           & v & \longmapsto      & Y(v, x)=\displaystyle \sum_{n \in \mathbb{Z}}v_n x^{-n-1}
\end{array}
\end{align*}
and a particular vector $\mathbf{1} \in V$, the vacuum vector, satisfying the following conditions:   
\begin{itemize}\setlength{\itemsep}{4pt}
\item For any $u, v \in V$, $u_nv=0$ for $n$ sufficiently large, i.e., $Y(u, x)v \in V((x))$ (Truncation property),
\item $Y(\mathbf{1}, x)=1$ ($1$ is the identity operator on $V$) (Vacuum property),
\item $Y(v, x)\mathbf{1} \in V[[x]]$ and $\displaystyle \lim_{x \to 0} Y(v, x)\mathbf{1}=v$ (Creation property),
\item The Jacobi identity 
\begin{align*}
\resizebox{\hsize}{!}{$ 
 x_0^{-1}\delta(\frac{x_1-x_2}{x_0})Y(u, x_1)Y(v, x_2)-x_0^{-1}\delta(\frac{x_2-x_1}{-x_0})Y(v, x_2)Y(u, x_1)=x_2^{-1}\delta(\frac{x_1-x_0}{x_2})Y(Y(u, x_0)v, x_2), $}
\end{align*}
where $\delta(x)=\displaystyle \sum_{n \in \mathbb{Z}}x^n$ is a formal series.
\end{itemize}
We will denote a vertex algebra by $ (V, Y, \mathbf{1})$. The vertex algebra is equipped with an endomorphism defined by $\mathcal D(v)=v_{-2}\mathbf{1}$ satisfying 
\[ [ \mathcal D, Y(v, x)]=Y(\mathcal D (v), x)=\frac{d}{dx}Y(v, x).\]

A {\bf vertex operator algebra } is a vertex algebra $ (V, Y, \mathbf{1})$ together with 
an element $ \omega \in V$ called the conformal vector (or Virasoro element) of $V$ such that $ Y(\omega, x)=\sum_{n\in \mathbb Z}L(n)x^{-n-2}$ and: 
\begin{itemize}\setlength{\itemsep}{4pt}
\item $[L(m),L(n)]=(m-n)L(m+n)+\frac{1}{12}(m^3-m)\delta_{m+n,0}c_V$ for $m, n \in \mathbb{Z}$ (the Virasoro relations) where $c_V \in \mathbb{C}$ is the central charge of $V$,
\item The linear operator $ L(0)$  on $ V$ is semisimple and $ V=\bigoplus_{n\in \mathbb Z} V_n$ with $L(0)v=nv=\on{wt}(v)v$ for $ n\in \mathbb{N}$, $v \in V_{n}$. Furthermore $\omega \in V_2$, $\dim V_n<\infty $, and $V_n=0$ for $ n \ll 0$.  
\item $L(-1)=\mathcal D$.
\end{itemize}
\end{definition}}

We will denote a vertex algebra by $ (V, Y(\cdot, x), \mathbf{1})$ where $V$ is the underlying vector space, $Y(\cdot, x)$ is the vertex operator map given by $Y(v, x)=\sum_{n \in \mathbb{Z}}v_n x^{-n-1}$ for $v \in V$, and $\mathbf{1}$ is the vacuum vector. If $V$ has a vertex operator algebra structure, the conformal element will be written as $\omega$.

\delete{
We remark that a vertex operator algebra $V=\bigoplus_{n\in\mathbb Z} V_n$ is automatically  a graded vector space.  A vertex operator algebra homomorphism sends the conformal element  to the conformal element and thus preserves the graded structure.  This structure has the property that, for any $ u \in V_n$,
\[ u_m(V_r)\subseteq V_{r+n-m-1}.\]
}

\delete{
\begin{definition}
Let $(V,Y,\mathbf{1})$ be a vertex  algebra. A vertex algebra \textbf{module} is a vector space $M$ equipped with a linear map
\begin{align*}
\begin{array}{cccc}
Y_M(\cdot, x): & V & \longrightarrow &\on{End}(M)[[x, x^{-1}]] \\
           & v & \longmapsto      & Y_M(v, x)=\displaystyle \sum_{n \in \mathbb{Z}}v_n x^{-n-1}
\end{array}
\end{align*}
such that for any $u,v \in V$, the following properties are verified:
\begin{itemize}\setlength{\itemsep}{4pt}
\item For any $v \in V$, $w \in M$,  $v_n w=0$ for $n$ sufficiently large, i.e., $Y_M(v, x)w \in M((x))$ (Truncation property),
\item $Y_M(\mathbf{1}, x)=\mathrm{Id}_{| M}$ (Vacuum property),
\item The Jacobi identity 
\begin{align*} 
\begin{array}{r}
 x_0^{-1}\delta(\frac{x_1-x_2}{x_0})Y_M(u, x_1)Y_M(v, x_2)-x_0^{-1}\delta(\frac{x_2-x_1}{-x_0})Y_M(v, x_2)Y_M(u, x_1)\\[5pt]
 =x_2^{-1}\delta(\frac{x_1-x_0}{x_2})Y_M(Y(u, x_0)v, x_2). 
 \end{array}
\end{align*}
\end{itemize}
\end{definition}}

For a vertex algebra $(V,Y(\cdot, x),\mathbf{1})$, a $V$-module $M$ will be written as $(M, Y_M(\cdot, x))$ (cf. \cite{Lepowsky-Li}). If $(V, Y(\cdot, x), \mathbf{1}, \omega)$ is a vertex operator algebra, then a vertex algebra module $M$ is called a {\bf weak} module for the vertex operator algebra. As consequences of the definition above we have:
\begin{itemize}\setlength{\itemsep}{4pt}
\item $Y_M(L(-1)v, x)=\frac{d}{dx}Y_M(v, x)$,
\item  $[L_m,L_n]=(m-n)L_{m+n}+\frac{1}{12}(m^3-m)\delta_{m+n,0}c_V$ with $c_V \in \cc$, 
\end{itemize}
where $Y_M(\omega, x)=\sum_{n \in \mathbb{Z}}L_n x^{-n-2}$. 

We remark that although the operator $L_0$ is acting on $M$, there is no requirement on $L_0$ to be diagonalizable or even locally finite. A homomorphism $f : M \longrightarrow M'$ between weak modules is a linear map such that $f(v_nm) = v_nf(m)$ for $v \in V$, $m \in M$. The category of all weak modules is an abelian category which is closed under coproduct, and will be denoted by $V\Mod^w$. 

Similarly, we can define a \textbf{graded} module  as a  weak $V$-module $M$ together with a $\mathbb C$-vector space decomposition $M=\bigoplus_{n \in \mathbb Z}M(n)$ such that, for all $n \in \mathbb{Z}$, $u \in V_n$, 
\[ u_m(M(r))\subseteq M(r+n-m-1) \] 
for all $ m, r \in \mathbb Z$. We will use $M(\bullet)$ to denote the $\mathbb Z$-graded vector space structure on $ M$. The same weak $V$-module $M$ can have many different $\mathbb Z$-graded vector space structures. 

 Let $V\Mod^{gr}$ be the category of all graded $V$-modules. We will use $(M, M(\bullet))$ to denote the objects of $V\Mod^{gr}$ in order to specify the graded structure. A homomorphism $ f: (M, M(\bullet))\longrightarrow  (N, N(\bullet))$ in $V\Mod^{gr}$ is a $V$-module  homomorphism  $ f: M\longrightarrow N$ such that $ f(M(n))\subseteq N(n)$ for all $ n\in \mathbb Z$. 
 It is straightforward to verify that  $V\Mod^{gr}$ is an abelian category. 
 We remark that $V\Mod^{gr}$ is not a full subcategory of $ V\Mod^w$.   However there is a natural additive forgetful functor $V\Mod^{gr} \longrightarrow V\Mod^w$, which sends $(M, M(\bullet))$ to $M$ forgetting the graded structure. The category $V\Mod^{gr}$ has a full subcategory $V\Mod^{gr}_+$ consisting of all modules $M(\bullet)$ such that $M(n)=0$ for $n \ll 0$. 

An \textbf{admissible} $V$-module is an object  $M$ in $V\Mod^w$ such that there is an object $(M,M(\bullet)) $ in $V\Mod^{gr}$  satisfying $M(n)=0$ for all $ n<0$. Let $ V\Mod^{adm}$ be the full subcategory of $ V\Mod^{w}$ consisting of all admissible modules $M$.  Thus every object in $ V\Mod^{adm}$ is of the form $M$ for some $ (M, M(\bullet))$ in $ V\Mod^{gr}_+$.  We want to mention that for a given admissible module $M$, there could be many different graded structures $ M(\bullet)$ on $M$ to make $(M, M(\bullet))$ an object in $V\Mod^{gr}_+$. Therefore the graded structure on $M$ is not preserved under homomorphisms in $ V\Mod^{adm}$. 

An \textbf{ordinary} $V$-module is a $\mathbb{C}$-graded weak $V$-module $M=\bigoplus_{\lambda \in \mathbb{C}} M_\lambda$ such that:
\begin{itemize}[topsep=1pt]\setlength{\itemsep}{0pt}
\item[(i)] $\text{dim }M_\lambda < \infty$ for all $\lambda \in \mathbb{C}$,
\item[(ii)] $M_{\lambda+n}=0$ for a fixed $\lambda$ and $n$ small enough,
\item[(iii)] $M_\lambda$ is the eigenspace of $L_0$ of eigenvalue $\lambda$.
\end{itemize}
We will use $ V\Mod^{ord}$ to denote the full subcategory of $V\Mod^w$ of all ordinary modules. 

In general, given two modules $M $ and  $N$ in $V\Mod^{ord}$ and a short exact sequence
\[ 0\longrightarrow N\longrightarrow P\longrightarrow M\longrightarrow 0\]
in the category $V\Mod^{w}$,   the weak module $P$ needs not be in $V\Mod^{ord}$. The full subcategory $V\Mod^{ord}$ is not necessarily closed under extensions in $V\Mod^{w}$. Requiring the operator $L_0$ to act semisimply on $M$ seems to be too strong a condition.  A weak module $M$ is called \textbf{logarithmic} if it satisfies all conditions of ordinary modules except  that the condition (iii)  is relaxed by assuming that $ M_\lambda $ is a generalized eigenspace for $L_0$, i.e., $(L_0-\lambda)$ acts nilpotently on $M_\lambda$.
We will use $V\Mod^{log}$ to denote the full subcategory of $V\Mod^{w}$ consisting all such weak modules $M$.  Indeed, $ V\Mod^{log}$ is closed under taking extensions, subquotients in $V\Mod^w$. Thus $V\Mod^{log}$ is a Serre subcategory of $ V\Mod^w$ (see \cite[Section 12.10]{Stacks-Project}). The categories $V\Mod^{log}$ and $V\Mod^{ord}$ can have quite different homological properties. See \cite[Section 9]{CCG} for a comparison between the self-extensions of the module $V$ in the two different categories. 

By using the identity
\[ [L_0, v_n]=(\on{wt}(v)-n-1)v_n\] 
for homogeneous $v$ in $V$, we can see that for any logarithmic module $M$, the subspace
\[ \bigoplus_{\lambda}\on{Ker}((L_0-\lambda): M_\lambda\to M_\lambda)\]
of $M$ is a weak $V$-submodule of $M$. Hence we have that every irreducible logarithmic module is ordinary. 

\delete{The operators $ L_{-1}, \; L_0,\;  L_1$ acting on all ordinary modules form a Lie algebra isomorphic to $ \mathfrak s\mathfrak l_2$. The category $\mathcal O^-$ for $ \mathfrak{sl}_2$ corresponding to lowest weight modules is defined as the Lie algebra modules $M$ such that $L(0)$ acts semisimply with finite dimensional weight spaces and $L(1)$ (corresponding to the negative roots) acts locally nilpotently.  It was shown in \cite{Dong-Lin-Mason} that every module in $ \mathcal O^-$ can be decomposed uniquely as direct sum of (possibly infinitely many) indecomposable modules in $\mathcal O^-$. In particular every indecomposable $N\in V\Mod^{ord}$ must have the form $N=\bigoplus_{n\geq 0}N_{\lambda+n}$ for a unique $\lambda\in \mathbb C$ such that $N_\lambda\neq 0$.

}
\begin{proposition}
Every module $M$ in $V\Mod^{log}$ can be decomposed uniquely into a direct sum of (possibly infinitely many) indecomposable modules such that each indecomposable component appears only finitely many times.
\end{proposition}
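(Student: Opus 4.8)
The plan is to reduce first to modules supported on a single coset of $\mathbb C/\mathbb Z$, then to prove that indecomposables in this reduced setting have local endomorphism rings (which yields uniqueness via Azumaya's theorem), and finally to establish existence of the decomposition together with finiteness of multiplicities. For the reduction I would use the identity $[L_0,v_n]=(\on{wt}(v)-n-1)v_n$: since $\on{wt}(v)\in\mathbb Z$ for homogeneous $v\in V$, each $v_n$ sends the generalized eigenspace $M_\lambda$ into $M_{\lambda+\on{wt}(v)-n-1}$, so the $V$-action shifts $L_0$-weights by integers. Hence, writing $M^{\bar\lambda}=\bigoplus_{\lambda\in\bar\lambda}M_\lambda$ for $\bar\lambda\in\mathbb C/\mathbb Z$, each $M^{\bar\lambda}$ is a weak $V$-submodule (again logarithmic) and $M=\bigoplus_{\bar\lambda}M^{\bar\lambda}$. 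Moreover every $V$-submodule $N\subseteq M$ is $L_0$-stable, hence graded by its generalized eigenspaces, so $N=\bigoplus_{\bar\lambda}(N\cap M^{\bar\lambda})$; in particular any indecomposable summand of $M$ lies in a single $M^{\bar\lambda}$. This segregates summands by coset and reduces the statement to an $M$ supported on one coset, i.e. bounded below with finite-dimensional graded pieces $M_{\lambda_0+k}$, $k\ge 0$.

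The engine of the argument is a Fitting-type lemma: every indecomposable $N$ in this reduced category has local endomorphism ring. Given $f\in\on{End}_V(N)$, the relation $fL_0=L_0f$ forces $f(N_\mu)\subseteq N_\mu$, so $f$ restricts to each finite-dimensional piece $N_\mu$. Setting $K=\bigcup_n\on{Ker}(f^n)$ and $I=\bigcap_n\on{Im}(f^n)$, both are $V$-submodules, and on each $N_\mu$ the classical Fitting lemma gives $N_\mu=(K\cap N_\mu)\oplus(I\cap N_\mu)$ since $f|_{N_\mu}$ acts on a finite-dimensional space. Hence $N=K\oplus I$ as $V$-modules, and indecomposability forces $K=0$ or $I=0$. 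If $K=0$ then $f$ is injective, hence bijective, on each $N_\mu$, so $f$ is an automorphism of $N$; if $I=0$ then $f|_{N_\mu}$ is nilpotent for every $\mu$, so $(1-f)$ is bijective on each piece and thus an automorphism of $N$. Therefore for every $f$ either $f$ or $1-f$ is a unit, which is precisely the statement that $\on{End}_V(N)$ is local. Once existence of a decomposition into indecomposables is known, the Krull--Remak--Schmidt--Azumaya theorem then delivers uniqueness up to isomorphism and permutation of the summands.

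The main obstacle is existence, since $M$ and its indecomposable summands are in general infinite-dimensional and no naive induction on dimension applies. Here I would build the decomposition from the bottom using finiteness of the graded pieces: realize the endomorphism ring $E=\on{End}_V(M)$ as the inverse limit $E=\varprojlim_k E_k$ of the finite-dimensional algebras $E_k$ obtained by restricting endomorphisms to $\bigoplus_{j\le k}M_{\lambda_0+j}$, and then construct a complete orthogonal family of primitive idempotents of $E$ by lifting and refining primitive idempotents through this tower, using boundedness below to guarantee that the refinement stabilizes at each fixed weight. This is the non-semisimple $L_0$ analogue of the decomposition established for category $\mathcal O^-$ of $\mathfrak{sl}_2$-modules in \cite{Dong-Lin-Mason}, and I expect the coherent assembly of these idempotents to be the technically delicate point. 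Finiteness of multiplicities is then immediate: if an indecomposable $N$ occurred infinitely often, choosing $\mu$ with $N_\mu\neq 0$ would make $M_\mu$ infinite-dimensional, contradicting $\dim M_\mu<\infty$. Combining existence, Azumaya uniqueness, and finite multiplicity yields the proposition.
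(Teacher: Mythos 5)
Your proposal is correct and follows essentially the route the paper intends: the paper's entire proof is the remark that the argument is ``similar to the proof for ordinary modules'' (i.e., the category $\mathcal O^-$ decomposition of Dong--Lin--Mason) with uniqueness coming from the finite-dimensionality of the generalized weight spaces, and your reduction to a single coset, the Fitting-lemma/Azumaya argument for local endomorphism rings and uniqueness, and the bottom-up construction of the decomposition are exactly a fleshed-out version of that. The one step you leave unfinished --- the coherent lifting of a complete orthogonal family of primitive idempotents through the tower $E=\varprojlim_k E_k$ --- is also the step the paper leaves implicit, and your strategy for it (using boundedness below and finite-dimensionality of each $E_k$ so that the sum $1=\sum_i e_i$ is finite on each truncation) is sound.
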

The proof is similar to the proof for ordinary modules. The uniqueness follows from the fact that  each generalized weight space is finite dimensional. 

Any  $M$ in $V\Mod^{log}$ can be given a canonical graded structure $M(\bullet)$ so that $(M,M(\bullet))$ is in $V\Mod^{gr}_+$. Indeed, we have $M=\bigoplus_{n \in \mathbb{N}}M(n)$ where $M(n)=\bigoplus_{\lambda \in \Lambda}M_{\lambda+n}$ and $\Lambda=\{\lambda \in \cc \ | \ M_\lambda \neq 0\;\text{ and }\; M_{\lambda -m}=0 \text{ if } m \in \mathbb{N}^*\}$. It follows that a log-module can be seen as an admissible module with a well-constructed gradation.  We can summarize the relations among the different full subcategories  as follows:
\begin{align*}
V\Mod^{ord} \subseteq V\Mod^{log}\subseteq V\Mod^{adm} \subseteq V\Mod^w.
 \end{align*}
We remark that $ V\Mod^{gr}$ is not a full subcategory of $V\Mod^w$. 

\delete{Before closing this subsection, we recall the following definition \cite{Zhu, DLM1}:
\begin{definition}
A vertex operator algebra $V$ is \textbf{rational} if every admissible $V$-module is a direct sum of irreducible admissible $V$-modules. In particular, a rational vertex operator algebra is semisimple, i.e., it is a direct sum of irreducible modules for itself.
\end{definition}}

It follows from the definition that, for a rational vertex operator algebra $V$, any logarithmic module is isomorphic to a direct sum of irreducible modules. Thus all logarithmic modules are ordinary. 
\subsection{$C_2$-algebras and Zhu algebras}\label{SectionC2ZhuAlgebra} 
Let $V$ be a vertex  algebra. The \textbf{$C_2$-algebra} $R(V)$ was introduced by  Zhu in \cite{Zhu} and is defined as $R(V)=V/C_2(V)$ where $C_2(V)$ is the vector subspace of $V$ spanned by the set $\{a_{-2}b \ | \ a,b \in V\}$. This quotient has a commutative Poisson algebra structure given by 
\begin{align*}
 \overline{a} \cdot \overline{b}= \overline{a_{-1}b}  \quad  \text{ and } \quad \{\overline{a}, \overline{b}\} =\overline{a_0b} \quad \text{ for } a,b \in V,
\end{align*}
where $\overline{x}$ is the image of $x \in V$ in $R(V)$. If $R(V)$ is finite dimensional then $V$ is said to be \textbf{$C_2$-cofinite}. 

Although $R(V)$ can be defined for any vertex algebra and its Poisson structure is independent of the choice of the conformal vector $ \omega$, the conformal structure determines a commutative graded algebra structure on $ R(V)$ with $R(V)_n$ being the image of $V_n$. 

The next algebra we are considering was also defined by Zhu (cf. \cite{Zhu}).

\begin{definition} \label{ZhuAlgebra}
We define the following bilinear operation on a vertex operator algebra $V$. For $a \in V$ homogeneous and $b \in V$, set
\begin{align*}
a \ast b =\displaystyle \text{Res}_x \left( Y(a, x)\frac{(1+x)^{\on{wt}a}}{x}b \right).
\end{align*}
Let $O(V)$ be the subspace of $V$ spanned by the elements of the form
\begin{align*}
 a \circ b= \text{Res}_x \left( Y(a, x)\frac{(1+x)^{\on{wt}a}}{x^2}b \right).
\end{align*}
The \textbf{Zhu algebra} of $V$ is $A(V)=V/O(V)$.
\end{definition}

It was proven in \cite{Zhu} that $A(V)$ is an associative algebra with the associative multiplication $[a] [b]=[a\ast b]$, where we write $[v]$ for the image of $v \in V$ in $A(V)$. There is a unit given by the image of $\mathbf{1}$. Finally, $A(V)$ is a filtered algebra with an ascending filtration where $F^nA(V)$ is the image of $\bigoplus_{i  \leq n}V_i$ in the quotient.

\delete{
\begin{remark}
We will use the convention that, for any variables $x, y$ and any $n \in \mathbb{Z}$,  $(x+y)^n=\displaystyle \sum_{k=0}^\infty \binom{n}{k}x^{n-k}y^k$ is a power series of the second variable $y$.
\end{remark}

\begin{theorem}[\cite{Zhu}] 
$O(V)$ is a two sided ideal for the multiplication $\ast$, and so $\ast$ is defined on $A(V)$. Moreover:
\begin{enumerate}\setlength{\itemsep}{4pt}
\item $A(V)$ is an associative algebra under the multiplication induced by $\ast$.
\item The image of the vacuum vector $\mathbf{1}$ is the unit of the algebra $A(V)$. 
\item The image of the conformal vector $\omega$ is in the center of $A(V)$.
\item $A(V)$ is a filtered algebra with an ascending filtration where $F^nA(V)$ is the image of $\bigoplus_{i  \leq n}V_i$ in the quotient.
\end{enumerate}
\end{theorem}}

The algebra $A(V)$ is a filtered algebra, so the associated graded algebra is defined as $\on{gr}A(V)= \bigoplus_{i} F^iA(V)/F^{i-1}A(V)$. It is shown in \cite{Zhu} that 
\begin{equation*}
\begin{cases}
F^i A(V)  F^j A(V) \subseteq F^{i+j} A(V), \\[5pt]
[F^i A(V),F^j A(V)] \subseteq F^{i+j-1} A(V),
\end{cases}       
\end{equation*}
so $A(V)$ is almost commutative, and hence $\on{gr}A(V)$ is a commutative Poisson algebra with the product being induced by the one in $A(V)$ (see \cite[1.3]{Chriss-Ginzburg}). 

The algebras $\on{gr}A(V)$ and $R(V)$ are linked by the following result:
\begin{proposition}\emph{\textbf{(\cite{Arakawa-Lam-Yamada}).}}\label{etaVsurjective} 
The map below is a well-defined surjective Poisson algebra homomorphism:
\begin{align*}
\begin{array}{cccc}
\eta_V: & R(V) & \longrightarrow &\on{gr}A(V)\\
           & \overline{a} & \longmapsto    & a + O(V) + \displaystyle \bigoplus_{i<\mathrm{wt}(a)} V_i.
\end{array}
\end{align*}
\end{proposition}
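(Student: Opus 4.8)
The plan is to realize $\eta_V$ as the symbol map of the filtered algebra $A(V)$, restricted to the graded pieces of $V$, and then to verify each structural compatibility degree by degree, exploiting that the operations $\circ$, $\ast$, and the commutator each have a single ``leading term'' dictated by the grading. Throughout I take $a\in V_{\on{wt}(a)}$, $b\in V_{\on{wt}(b)}$ homogeneous and extract the residues in Definition~\ref{ZhuAlgebra} to get the expansions
\begin{align*}
a\circ b=\sum_{i\ge 0}\binom{\on{wt}(a)}{i}a_{i-2}b,\qquad
a\ast b=\sum_{i\ge 0}\binom{\on{wt}(a)}{i}a_{i-1}b.
\end{align*}
Since $a_n$ maps $V_r$ into $V_{r+\on{wt}(a)-n-1}$, in each sum the $i=0$ term has strictly greater weight than every subsequent one: $a_{-2}b\in V_{\on{wt}(a)+\on{wt}(b)+1}$ and $a_{-1}b\in V_{\on{wt}(a)+\on{wt}(b)}$, while all later summands lie in strictly lower $V_i$. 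This single observation drives the whole argument.

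First I would define a graded linear map $\sigma\colon V\to\on{gr}A(V)$ sending homogeneous $a$ to the symbol of its image $[a]$ in $\on{gr}^{\on{wt}(a)}A(V)=F^{\on{wt}(a)}A(V)/F^{\on{wt}(a)-1}A(V)$, i.e.\ to the coset $a+O(V)+\bigoplus_{i<\on{wt}(a)}V_i$, extended linearly. For \textbf{well-definedness}, since $C_2(V)$ is a graded subspace it suffices to check that each homogeneous generator $a_{-2}b$ maps to $0$. The first expansion gives $a\circ b=a_{-2}b+\sum_{i\ge 1}\binom{\on{wt}(a)}{i}a_{i-2}b$ where the tail lies in $\bigoplus_{i<\on{wt}(a)+\on{wt}(b)+1}V_i$; as $a\circ b\in O(V)$ this places $a_{-2}b$ in $O(V)+\bigoplus_{i<\on{wt}(a)+\on{wt}(b)+1}V_i$, so its symbol vanishes. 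Thus $\sigma$ factors through $R(V)$ to give $\eta_V$, graded of degree $0$. \textbf{Surjectivity} is then immediate from the definition of the filtration: a class in $\on{gr}^nA(V)$ is represented by the image of some $v\in\bigoplus_{i\le n}V_i$, whose components of degree $<n$ already lie in $F^{n-1}A(V)$, so the class equals $\eta_V(\overline{v_n})$ for $v_n$ the top homogeneous part of $v$.

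For the \textbf{product}, the product $\eta_V(\overline a)\,\eta_V(\overline b)$ is by definition the symbol of $[a]\ast[b]=[a\ast b]$ in $\on{gr}^{\on{wt}(a)+\on{wt}(b)}A(V)$. By the second expansion the leading term of $a\ast b$ is $a_{-1}b$, so this symbol equals the symbol of $a_{-1}b$, which is exactly $\eta_V(\overline{a_{-1}b})=\eta_V(\overline a\cdot\overline b)$. The unit is handled by observing that $\eta_V(\overline{\mathbf 1})$ is the symbol of the unit $[\mathbf 1]\in F^0A(V)$, which is the identity of $\on{gr}A(V)$. This establishes the commutative algebra homomorphism property.

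Finally, the \textbf{Poisson bracket}, which I expect to be the main obstacle, being the only step that needs a genuine identity in $A(V)$ rather than bookkeeping of leading terms. The bracket on $\on{gr}A(V)$ sends $(\eta_V(\overline a),\eta_V(\overline b))$ to the symbol of the commutator $[a]\ast[b]-[b]\ast[a]$ in $\on{gr}^{\on{wt}(a)+\on{wt}(b)-1}A(V)$, which is well defined precisely because of the almost-commutativity $[F^pA(V),F^qA(V)]\subseteq F^{p+q-1}A(V)$ recorded above. The crucial input is Zhu's commutator congruence
\begin{align*}
a\ast b-b\ast a\equiv \text{Res}_x\left(Y(a,x)(1+x)^{\on{wt}(a)-1}b\right)=\sum_{n\ge 0}\binom{\on{wt}(a)-1}{n}a_nb \pmod{O(V)},
\end{align*}
which I would cite from \cite{Zhu} (or reprove by the standard residue manipulation comparing $a\ast b$ and $b\ast a$ modulo $O(V)$). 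Its leading term is $a_0b\in V_{\on{wt}(a)+\on{wt}(b)-1}$ with all others of strictly smaller weight, so the symbol of $[a]\ast[b]-[b]\ast[a]$ equals the symbol of $a_0b$, namely $\eta_V(\overline{a_0b})=\eta_V(\{\overline a,\overline b\})$; the bracket degrees ($-1$) match on both sides. Assembling well-definedness, the homomorphism property, bracket compatibility, and surjectivity yields the proposition.
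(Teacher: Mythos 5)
Your proof is correct. The paper gives no proof of this proposition at all --- it is simply quoted from \cite{Arakawa-Lam-Yamada} --- so there is nothing internal to compare against; your symbol-map argument is the standard one, and each step checks out: the $i=0$ leading terms of $a\circ b$, $a\ast b$, and of Zhu's commutator congruence $a\ast b-b\ast a\equiv\operatorname{Res}_x\bigl(Y(a,x)(1+x)^{\operatorname{wt}(a)-1}b\bigr)\pmod{O(V)}$ (which is indeed in \cite{Zhu}) land in weights $\operatorname{wt}(a)+\operatorname{wt}(b)+1$, $\operatorname{wt}(a)+\operatorname{wt}(b)$, and $\operatorname{wt}(a)+\operatorname{wt}(b)-1$ respectively, with all tails strictly lower, which is exactly what is needed for well-definedness on $R(V)=V/C_2(V)$, multiplicativity, and compatibility of the two Poisson brackets.
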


\subsection{Functors among module categories}

Given a graded module $M$ for a vertex operator algebra $V$, we can define a graded $R(V)$-module by setting $R(M)=M/C_2(M)$ with $C_2(M)=\on{Span}\{v_{-2}w \ | \ v \in V, w \in M\}$. Likewise, we can define a filtered $A(V)$-module by $A(M)=M/O(M)$ where $O(M)$ is the linear span of elements of the form $\text{Res}_x \left(Y_M(v, x)\frac{(1+x)^{\on{wt}v}}{x^2}w \right)$ where $v \in V, w \in M$.

Let $W$ be a filtered module over the filtered algebra $A(V)$, i.e., there exists an ascending filtration $\cdots\subseteq F^jW\subseteq F^{j+1}W\subseteq \cdots $ of subspaces in $W$ such that  $F^iA(V)F^jW \subseteq F^{i+j}W$. We can construct the associated graded $\on{gr} A(V)$-module $\on{gr}(W)= \bigoplus_{i} F^iW/F^{i-1}W$ where, if $a \in F^mA(V)$ and $w \in F^nW$, the action is defined as
\begin{align*}
(a + F^{m-1}A(V))(w + F^{n-1}W) = aw + F^{m+n-1}W.
\end{align*}
Thus $\on{gr}(W)$ is a graded module over the graded algebra $\on{gr} A(V)$. 

Finally, consider a graded $R(V)$-module $W$. Then, as $\eta_V$ is surjective and preserves the gradation, we see that $\on{gr} A(V) \otimes_{R(V)}W=W/(\on{Ker}\eta_V)W$ has a graded $\on{gr} A(V)$-module structure.

We have the following picture:
\begin{equation}\label{alg-diagram}
\begin{tikzpicture}[baseline=(current  bounding  box.center), scale=1, transform shape]
\tikzset{>=stealth}
\node (1) at (0,0) []{$V\Mod^{gr}$};
\node (2) at (2.5,1.5) []{$A(V)\Mod^{fil}$};
\node (3) at (2.5,-1.5) []{$R(V)\Mod^{gr}$};
\node (4) at (5,0) []{$\on{gr} A(V)\Mod^{gr}$};

\draw[->]  (1) -- node[above left] {$A(-)$} (2);
\draw[->]  (1) -- node[below left] {$R(-)$} (3);
\draw[->]  (2) -- node[above right] {$\on{gr}(-)$} (4);
\draw[->]  (3) -- node[below right] {$\on{gr} A(V) \otimes_{R(V)}-$} (4) ;

\end{tikzpicture}
\end{equation}
This diagram is in general not strictly commutative.

\delete{Let $M$ be a weak $V$-module and define
\[
\Omega(M)= \{w \in M \ | \ u_n w = 0, \ \text{ for all }  u \in V \ \text{ and }   n > \on{wt}(u)-1\}.
\]
\begin{theorem}\emph{\textbf{(\cite{DLM1, DLM2} and \cite{Zhu}).}} \label{TheoremZhuAlgebra}
Let $V$ be a vertex operator algebra. 
\begin{enumerate}\setlength{\itemsep}{4pt}
\item Let $M$ be a weak $V$-module and define the linear map
\begin{align*}
\begin{array}{cccc}
o: & A(V) & \longrightarrow &\on{End}(\Omega(M))\\
           & [a] & \longmapsto      & a(\on{wt}(a)-1)_{| \Omega(M)}
\end{array}
\end{align*}
for $a \in V$ homogeneous, and extend it to all of $V$ by linearity. This map is an algebra homomorphism from $A(V)$ to $\on{End}(\Omega(M))$ making $\Omega(M)$ an $A(V)$-module. Furthermore, if $M= \bigoplus_{n=0}^\infty M(n)$ with $M(0)\neq 0$ is admissible and irreducible, then $\Omega(M)=M(0)$. 
\item $\Omega$ is a covariant functor from $V\Mod^w$ to $A(V)\Mod$.
\item The map $M \longrightarrow M(0)$ induces a bijection between the set of equivalence classes of irreducible admissible $V$-modules with $M(0) \neq 0$ and the set of equivalence classes of irreducible $A(V)$-modules.
\item If $V$ is rational then $A(V)$ is finite dimensional and semisimple.
\item If $V$ is rational then there are only finitely many irreducible admissible $V$-modules up to isomorphism, and each irreducible admissible $V$-module is ordinary.
\end{enumerate}
\end{theorem}}

Our goal is, given a vertex operator algebra $V$, to describe the category of $V$-modules using the other module categories in Diagram \eqref{alg-diagram}. In particular, the algebras involved might be finite dimensional, which would simplify computations. Our focus will be on abelian module categories with enough projectives. Let $\mathcal{A}=A\Mod$ be such a category of modules for an algebra $A$. Then one can define the Ext quiver $kQ_{A}$ of $A$. Furthermore, if $\{L_i\}_{i \in I}$ is the set of irreducible $A$-modules, we can define the Yoneda algebra $\on{Ext}_{A}^{*}(\bigoplus_{i \in I}L_i,\bigoplus_{i \in I}L_i)$. Finally, since the category $\mathcal{A}$ has enough projectives, every simple module $L_i$ has a projective cover $P_i$. This leads to the algebra $\on{End}_{A}(\bigoplus_{i \in I}P_i)$. All these objects contain information on the original category $\mathcal{A}$. Our goal is to construct these objects for the algebras in Diagram \eqref{alg-diagram} when $V$ is the triplet vertex operator algebra $\mathcal{W}(p)$ (see Section~\ref{QuiverWp}), and determine how much information is preserved through the passage from vertex operator algebra to algebra.

\section{Duality between the Ext quiver and $\on{Rad}(P_\mathcal{A})/\on{Rad}^2(P_\mathcal{A})$}\label{Section3}

\delete{
\subsection{Path algebra of a quiver}
The reference for this section is \cite{Assem-Simson-Skowronski}.

In this section we introduce quivers and how they relate to algebras.
\begin{definition}
A \textbf{quiver} is a quadruplet $Q=(Q_0, Q_1, s, t)$ with $Q_0$ a set of vertices, $Q_1$ a set of arrows between the vertices and $s, t: Q_1 \longrightarrow Q_0$ two maps that give to each arrow a source and a target respectively. We shall denote the vertices by letters $i ,j,\ldots $, and the arrow $\alpha$ such that $s(\alpha)=i$ and $t(\alpha)=j$ will be written $\alpha: i \longrightarrow j$.
\end{definition}

The \textbf{path algebra} $k Q$ of a quiver $Q$ is the algebra generated by the paths $e_i$ ($i \in Q_0$) of length $0$ and the paths $\alpha$ ($\alpha \in Q_1$) of length $1$ subjected to the relations:
\[
e_i^2 = e_i, \quad e_ie_j = 0 \ (i \neq j), \quad e_{t(\alpha)}\alpha = \alpha e_{s(\alpha)} = \alpha.
\]
By assigning the $e_i$'s with degree $0$ and the $\alpha$'s with degree $1$, the path algebra is a graded algebra $\bigoplus_{l=0}^{\infty} k Q_l$ with $kQ_l$ being the set of all directed paths of length $l$.

We note that the path algebra $k Q$ does not have an identity unless $Q_0$ is finite.  Let $R_Q$ be the ideal of $kQ$ generated by all $kQ_l$, $l \geq 1$. An ideal $\mathcal{I}$ of $kQ$ is called \textbf{admissible} if there exists an integer $m \geq 2$ such that $R_Q^m \subseteq \mathcal{I} \subseteq R_Q^2$.}

In this section $\mathbf{k}$ is a field. Recall that a $\mathbf k$-linear abelian category $ \mathcal A$ is an abelian category that is enriched in the category of $\mathbf k$-vector spaces. Thus $\on{Ext}^i_{\mathcal A}(a, b)$ are all $ \mathbf k$-vector spaces and compositions are all $ \mathbf k$-bilinear. In this section, $\mathcal{A}$ will be a $\mathbf k$-linear abelian category with finitely many irreducibles and enough projective objects. Let $\on{Irr}(\mathcal A)$ be the set of isomorphism classes of all simple objects in $ \mathcal A$. Let $L_\mathcal{A}=\bigoplus_{L \in \on{Irr}(\mathcal{A})}L$ and $P_\mathcal{A}=\bigoplus_{L \in \on{Irr}(\mathcal{A})}P_L$ where $P_L$ is the projective cover of $L$.

We will later on need the following two lemmas. The proof of the first one relies on the long exact sequence of cohomology, while the second is obtained by an induction on the length of the composition series.

\begin{lemma}\label{LemmaExt}
If $ L$ is an irreducible object and $P_L$ is its projective cover, then for any irreducible object $L'$ we have:
\[ \on{Ext}^i_{\mathcal{A}}(L, L')= \on{Ext}^{i-1}_{\mathcal{A}}(\on{Rad}(P_L), L').\]
\end{lemma}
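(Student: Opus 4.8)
The plan is to exploit the defining short exact sequence of the projective cover and feed it into the long exact sequence for $\on{Ext}$. By definition of the projective cover, the surjection $P_L \twoheadrightarrow L$ has kernel equal to the radical $\on{Rad}(P_L)$, so we obtain the short exact sequence
\[ 0 \longrightarrow \on{Rad}(P_L) \longrightarrow P_L \longrightarrow L \longrightarrow 0. \]
Applying the contravariant functor $\on{Hom}_{\mathcal A}(-, L')$ produces the long exact sequence
\[ \cdots \to \on{Ext}^{i-1}_{\mathcal A}(P_L, L') \to \on{Ext}^{i-1}_{\mathcal A}(\on{Rad}(P_L), L') \to \on{Ext}^{i}_{\mathcal A}(L, L') \to \on{Ext}^{i}_{\mathcal A}(P_L, L') \to \cdots. \]

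First I would handle the range $i \geq 2$. Since $P_L$ is projective, $\on{Ext}^{j}_{\mathcal A}(P_L, L') = 0$ for every $j \geq 1$; in particular the two terms $\on{Ext}^{i-1}_{\mathcal A}(P_L, L')$ and $\on{Ext}^{i}_{\mathcal A}(P_L, L')$ flanking the connecting homomorphism both vanish, so the map $\on{Ext}^{i-1}_{\mathcal A}(\on{Rad}(P_L), L') \to \on{Ext}^{i}_{\mathcal A}(L, L')$ is forced to be an isomorphism, which is exactly the claim in that range.

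The case $i = 1$ is the only delicate point, and is where I expect the main (if modest) obstacle to lie, because projectivity alone no longer makes both flanking terms vanish. Here the relevant segment reads
\[ \on{Hom}_{\mathcal A}(P_L, L') \xrightarrow{\ \rho\ } \on{Hom}_{\mathcal A}(\on{Rad}(P_L), L') \longrightarrow \on{Ext}^{1}_{\mathcal A}(L, L') \longrightarrow \on{Ext}^{1}_{\mathcal A}(P_L, L') = 0, \]
and I must show the restriction map $\rho$ is zero in order to conclude $\on{Hom}_{\mathcal A}(\on{Rad}(P_L), L') \cong \on{Ext}^{1}_{\mathcal A}(L, L')$. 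This is precisely where I would use that $P_L$ is a projective \emph{cover} rather than merely a projective object: any morphism $f : P_L \to L'$ carries the radical $\on{Rad}(P_L)$ into $\on{Rad}(L')$, and since $L'$ is simple we have $\on{Rad}(L') = 0$, so $f$ vanishes on $\on{Rad}(P_L)$ and hence $\rho(f) = 0$. Equivalently, every homomorphism from $P_L$ into a semisimple object factors through $P_L/\on{Rad}(P_L) \cong L$. With $\rho = 0$, exactness of the above segment yields the desired isomorphism in degree $1$ as well, completing the argument uniformly for all $i \geq 1$.
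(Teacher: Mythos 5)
Your proof is correct and follows exactly the route the paper indicates (it only remarks that the lemma ``relies on the long exact sequence of cohomology''): the short exact sequence $0\to\on{Rad}(P_L)\to P_L\to L\to 0$, vanishing of higher $\on{Ext}$ against the projective $P_L$ for $i\ge 2$, and vanishing of the restriction map $\on{Hom}_{\mathcal A}(P_L,L')\to\on{Hom}_{\mathcal A}(\on{Rad}(P_L),L')$ for $i=1$ because every map to the simple $L'$ kills the radical. The only cosmetic quibble is that the projective-cover hypothesis is really what identifies the kernel of $P_L\twoheadrightarrow L$ with $\on{Rad}(P_L)$ at the outset, while the vanishing of $\rho$ uses only the simplicity of $L'$; this does not affect the argument.
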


\begin{lemma}\label{LemmaHom}
Let $L$ be an irreducible object in $\mathcal{A}$ with projective cover $P_L$, and let $M$ be an object of finite length. Then we have:
\[ \on{dim}_{\mathbf k} \on{Hom}_\mathcal{A}(P_L, M)= [M:L] \times  \on{dim}_{\mathbf k} \on{End}_\mathcal{A}(L),\]
where $[M:L]$ is the multiplicity of $L$ in $M$, i.e., the number of composition factors of the Jordan-H\"older composition series of $M$ isomorphic to $L$.
\end{lemma}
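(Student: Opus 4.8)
The plan is to induct on the length of the composition series of $M$, as the remark preceding the statement indicates. The one structural input needed is that $P_L$ is projective, so the functor $\on{Hom}_\mathcal{A}(P_L, -)$ is exact; since both $\dim_{\mathbf k}\on{Hom}_\mathcal{A}(P_L, -)$ and the multiplicity $[-:L]$ are additive along short exact sequences, the inductive step becomes purely formal, and essentially all the content will sit in the base case.

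For the base case I would take $M$ simple, say $M\cong L'$ for some $L'\in\on{Irr}(\mathcal{A})$. Because $P_L$ is a projective cover of $L$, its head $P_L/\on{Rad}(P_L)$ is isomorphic to $L$, and any homomorphism from $P_L$ to a simple object factors through the head; hence $\on{Hom}_\mathcal{A}(P_L, L')\cong \on{Hom}_\mathcal{A}(L, L')$. By Schur's lemma this is $\on{End}_\mathcal{A}(L)$ when $L'\cong L$ and $0$ otherwise, which agrees with $[L':L]\cdot\dim_{\mathbf k}\on{End}_\mathcal{A}(L)$ in both cases (noting $[L':L]=1$ exactly when $L'\cong L$).

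For the inductive step, with $M$ of length $n>1$, I would pick any short exact sequence $0\to M'\to M\to M''\to 0$ in $\mathcal{A}$ with $M'$ and $M''$ of strictly smaller length, for instance by letting $M''$ be a simple quotient of $M$ and $M'$ its kernel. Applying the exact functor $\on{Hom}_\mathcal{A}(P_L,-)$ produces a short exact sequence of $\mathbf k$-vector spaces, so $\dim_{\mathbf k}\on{Hom}_\mathcal{A}(P_L, M)=\dim_{\mathbf k}\on{Hom}_\mathcal{A}(P_L, M')+\dim_{\mathbf k}\on{Hom}_\mathcal{A}(P_L, M'')$, while additivity of Jordan--H\"older multiplicities gives $[M:L]=[M':L]+[M'':L]$. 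The induction hypothesis applied to $M'$ and $M''$, after factoring out the common scalar $\dim_{\mathbf k}\on{End}_\mathcal{A}(L)$, then yields the claimed identity.

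I expect the only delicate point to be the base case, specifically the isomorphism $\on{Hom}_\mathcal{A}(P_L, L')\cong \on{Hom}_\mathcal{A}(L, L')$, which rests on the defining property that a projective cover has simple head isomorphic to $L$ (equivalently, that $\on{Rad}(P_L)$ is the unique maximal subobject). Once this is in hand, the remainder is routine bookkeeping with the exact functor $\on{Hom}_\mathcal{A}(P_L,-)$ and additive invariants.
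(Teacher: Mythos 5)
Your proof is correct and follows the same route the paper indicates for this lemma, namely induction on the length of the composition series, with the base case handled by factoring maps to a simple object through the head $P_L/\on{Rad}(P_L)\cong L$ and the inductive step by exactness of $\on{Hom}_\mathcal{A}(P_L,-)$ together with additivity of $[-:L]$. No gaps.
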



\begin{definition}\label{DefinitionExtQuiver}
Assume that $ \on{End}_{\mathcal A}(a)\cong \mathbf k$ for all $ a \in \on{Irr}(\mathcal A)$. The \textbf{Ext quiver} of $\mathcal{A}$, denoted by $Q_{\mathcal A}$, is defined as follows:
\begin{enumerate}[topsep=1pt]\setlength{\itemsep}{0pt}
\item Let $(Q_{\mathcal A})_0=\on{Irr}(\mathcal A)$ be the set of all vertices;
\item Given $a,b \in (Q_{\mathcal A})_0$, the set of arrows $a \longrightarrow b$ is indexed by a (fixed) $\mathbf k$-basis of   $\on{Ext}^1_{\mathcal A}(a,b)$.
\end{enumerate}
\end{definition}

When $A$ is a finite dimensional $\mathbf k$-algebra with $\mathbf k$ being algebraically closed, the category $\mathcal A=A\Mod$ is a $\mathbf k$-linear category satisfying the conditions of the definition. We will write $Q_A$ for $Q_{A\Mod}$ and $\on{Irr}(A)$ for $\on{Irr}(A\Mod)$. 

\begin{remark}\label{rem:3.4}
\delete{A finite dimensional $\mathbf k$-algebra $A$ is called basic if in its decomposition using primitive orthogonal idempotents, the indecomposable summands are pairwise non-isomorphic. One property of a basic algebra is that all its irreducible representations are one dimensional over $\mathbf k$. Every finite dimensional $\mathbf k$-algebra $ A$ is Morita equivalent to a finite dimensional basic algebra, which is the algebra $ \on{End}_{A}(P)$ where $ P=\oplus _{L\in\on{Irr}(A)}P_L$ and $P_L$ is the projective cover of an irreducible module $L$. Computing this algebra for the categories related to a vertex operator algebra is also a main subject of this paper. }

If $A$ is a basic, finite dimensional $\mathbf{k}$-algebra, then the Ext quiver is isomorphic to the ordinary quiver obtained from $\on{Rad}(A)/\on{Rad}^2(A)$ (cf. \cite[II.3, III.2]{Assem-Simson-Skowronski}). However the Ext quiver is Morita invariant, while the ordinary quiver is not. For example, let $A=M_n(\mathbf{k})$. The basic algebra $A^b$ associated to $A$ is $\mathbf{k}$, and the categories of finite dimensional $A$-modules and $A^b$-modules are equivalent. We can see that the ordinary quiver of $A$ is $n$ isolated points (because $A$ is semisimple), the ordinary quiver of $A^b$ is just a point, while $Q_A$ and $Q_{A^b}$ are both one isolated point. See \cite{Li-Lin} on further discussions of various different quivers associated to finite dimensional algebras (Artinian algebras). 
\end{remark}

The isomorphism mentioned in Remark~\ref{rem:3.4} needs some precisions. Because of Lemma~\ref{LemmaExt}, we have
\[
\on{Ext}^1_\mathcal{A}(L_\mathcal{A}, L_\mathcal{A})=\on{Hom}_\mathcal{A}(\on{Rad}(P_\mathcal{A}), L_\mathcal{A})=\on{Hom}_\mathcal{A}(\on{Rad}(P_\mathcal{A})/\on{Rad}^2(P_\mathcal{A}), L_\mathcal{A}).
\]
It follows that the functor $\on{Hom}_\mathcal{A}(-, L_\mathcal{A})$ establishes a duality between the Ext quiver and the ordinary quiver.

\delete{
The next result is due to P. Gabriel (\cite{Gabriel}).

\begin{theorem}\label{theoremGabriel}
Let $A$ be a finite dimensional, basic algebra over an algebraically closed field $\mathbf{k}$. There exists an admissible ideal $\mathcal{I}$ of $kQ_A$ such that $A \cong kQ_A/\mathcal{I}$.
\end{theorem}}

Assume the abelian category $\mathcal A$ has enough projectives and  any irreducible object $L$ in $\mathcal A$  has a projective cover $P_L$. One can define the Yoneda algebra $\on{Ext}_{\mathcal A}^*(L_\mathcal{A}, L_\mathcal{A})$ (cf. \cite{MacLane, Weibel}) and the endomorphism algebra $\on{End}_{\mathcal A}(P_\mathcal{A})$. These two algebras are related to the path algebra in the following way:

\begin{lemma}\label{kQandExt}
Let $\mathcal{A}$ be an abelian category with enough projectives and finitely many irreducibles. Then there is a homomorphism of graded algebras:
\[
\varphi :kQ_{\mathcal{A}} \longrightarrow  \on{Ext}_\mathcal{A}^*(L_\mathcal{A}, L_\mathcal{A}).
\]
\end{lemma}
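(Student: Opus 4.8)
The plan is to build $\varphi$ from its values on the two kinds of generators of the path algebra—the length-zero paths $e_a$ and the length-one paths (arrows)—and then to invoke the universal property of $kQ_{\mathcal{A}}$ to extend it. Recall that, with $S = \bigoplus_{a \in \on{Irr}(\mathcal{A})} \mathbf{k}\, e_a$ the (finite-dimensional, semisimple) subalgebra spanned by the vertices and $M = (kQ_\mathcal{A})_1$ the $S$-bimodule of arrows, one has $kQ_{\mathcal{A}} \cong T_S(M) = S \oplus M \oplus (M\otimes_S M) \oplus \cdots$, the tensor algebra of $M$ over $S$, graded by path length. To produce a graded algebra homomorphism out of $T_S(M)$ it therefore suffices to give an algebra homomorphism $S \to B_0$ and an $S$-bimodule map $M \to B_1$, where I write $B = \on{Ext}^*_\mathcal{A}(L_\mathcal{A}, L_\mathcal{A})$ for the target, equipped with its Yoneda product, and $B_i = \on{Ext}^i_\mathcal{A}(L_\mathcal{A}, L_\mathcal{A})$; note $B_0 = \on{Hom}_\mathcal{A}(L_\mathcal{A}, L_\mathcal{A})$ and $B_1 = \on{Ext}^1_\mathcal{A}(L_\mathcal{A}, L_\mathcal{A})$.

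On vertices I set $\varphi(e_a) = \on{id}_{L_a} \in \on{Hom}_\mathcal{A}(L_a, L_a) \subseteq B_0$. Since the objects $L_a$ are simple and pairwise non-isomorphic, Schur's lemma (together with the standing hypothesis $\on{End}_\mathcal{A}(L_a) \cong \mathbf{k}$ from Definition \ref{DefinitionExtQuiver}) gives $\on{Hom}_\mathcal{A}(L_a, L_b) = 0$ for $a \neq b$ and $\mathbf{k}$ for $a = b$; hence the $\on{id}_{L_a}$ are orthogonal idempotents of $B_0$ summing to $\on{id}_{L_\mathcal{A}}$, and $e_a \mapsto \on{id}_{L_a}$ is a unital algebra homomorphism $S \to B_0$, in fact an isomorphism onto $B_0$. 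On arrows, recall that each arrow from $a$ to $b$ is, by Definition \ref{DefinitionExtQuiver}, a chosen basis vector of $\on{Ext}^1_\mathcal{A}(a, b)$; I let $\varphi$ send it to the corresponding element of $\on{Ext}^1_\mathcal{A}(L_a, L_b) \subseteq B_1$. To see that this is a map of $S$-bimodules it is enough to check, for an arrow $\alpha \colon a \to b$, that $\on{id}_{L_b}\cdot \varphi(\alpha) = \varphi(\alpha)\cdot \on{id}_{L_a} = \varphi(\alpha)$ and that all other idempotents annihilate $\varphi(\alpha)$; this is immediate, because Yoneda multiplication by $\on{id}_{L_b}$ (resp. $\on{id}_{L_a}$) on the left (resp. right) is exactly the projection of $B_1 = \bigoplus_{c,d}\on{Ext}^1_\mathcal{A}(L_c, L_d)$ onto the summands with target $L_b$ (resp. source $L_a$), and $\varphi(\alpha)$ lies in the $(a,b)$-summand.

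With these two data in hand, the universal property of the tensor algebra $T_S(M)$ extends $\varphi$ uniquely to an algebra homomorphism $kQ_\mathcal{A} \to B$. Equivalently, using the presentation of $kQ_\mathcal{A}$ by the relations $e_a^2 = e_a$, $e_a e_{a'} = 0$ $(a \neq a')$ and $e_{t(\alpha)}\alpha = \alpha e_{s(\alpha)} = \alpha$, the verifications above are precisely the statement that the images $\varphi(e_a), \varphi(\alpha)$ satisfy these defining relations, so $\varphi$ is well defined. It is graded because the generators are sent to degrees $0$ and $1$ and the Yoneda product is additive in cohomological degree, so a length-$n$ path $\alpha_n \cdots \alpha_1$ maps to the Yoneda composite $\varphi(\alpha_n)\cdots\varphi(\alpha_1) \in \on{Ext}^n_\mathcal{A}(L_\mathcal{A}, L_\mathcal{A})$.

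The only genuinely structural input, and the point I would be most careful about, is that the Yoneda product on $\on{Ext}^*_\mathcal{A}(L_\mathcal{A}, L_\mathcal{A})$ is an associative, unital, graded multiplication for which the degree-zero identities behave as orthogonal idempotents—this is standard (cf. \cite{MacLane, Weibel}), but it is exactly what makes the universal property applicable. The remaining subtlety is purely bookkeeping: one must align the composition order in the path algebra (where, in the convention $e_{t(\alpha)}\alpha = \alpha$, the composite of $\alpha\colon a\to b$ followed by $\beta\colon b\to c$ is the product $\beta\alpha$) with the order of Yoneda composition $\on{Ext}^1_\mathcal{A}(L_b,L_c)\otimes \on{Ext}^1_\mathcal{A}(L_a,L_b)\to \on{Ext}^2_\mathcal{A}(L_a,L_c)$, so that no spurious transposition of the multiplication is introduced. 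I would not expect $\varphi$ to be injective or surjective in general; the lemma asserts only its existence, and that is what the universal-property argument delivers.
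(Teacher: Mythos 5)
Your proof is correct and follows essentially the same route as the paper's: send each vertex idempotent $e_a$ to $\on{id}_{L_a}$ (orthogonal idempotents in $\on{Ext}^0$ by Schur's lemma) and each arrow to its corresponding basis element of $\on{Ext}^1$, then extend using the fact that $kQ_{\mathcal A}$ is generated in degrees $0$ and $1$. Your version merely makes explicit, via the tensor-algebra presentation $T_S(M)$ and the bimodule check, the universal-property step that the paper leaves implicit.
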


\delete{
\begin{proof}
It is known $\on{Ext}_A^0(\bigoplus_{L \in \on{Irr}(A)} L, \bigoplus_{L \in \on{Irr}(A)} L)=\on{Hom}_A(\bigoplus_{L \in \on{Irr}(A)} L, \bigoplus_{L \in \on{Irr}(A)} L)=\bigoplus_{i \in Q_0}\mathbf{k} \on{id}_{L_i}$ because of Schur's lemma. Here $\{L_i\}_{i \in Q_0}$ is the set of irreducible $A$-modules. Furthermore, the vector space $(kQ_A)_1$ is spanned by the basis elements of $\on{Ext}_A^1(L_i ,L_j)$ for all $i, j \in Q_0$. As $kQ_A$ is generated by its elements of degree $0$ and $1$, we have naturally defined homomorphism:
\[
\begin{array}[b]{cccc}
\varphi: & kQ_A & \longrightarrow & \on{Ext}_A^*(\bigoplus_{L \in \on{Irr}(A)} L, \bigoplus_{L \in \on{Irr}(A)} L), \\
        & e_i  & \longmapsto & \on{id}_{L_i}, \\
        & \alpha:i \to j & \longmapsto & \text{basis element of } \on{Ext}_A^1(L_i, L_j). 
\end{array}\qedhere 
\] 
\end{proof}}

We notice that in general $\varphi$ is neither injective nor surjective. We finish the section with the following well-known lemma:

\begin{lemma}\label{kQandEnd}
Let $\mathcal{A}$ be an abelian category with enough projectives and finitely many irreducibles. Then there exists an admissible ideal $I \subseteq kQ_\mathcal{A}$ and an isomorphism of algebras:
\[
kQ_\mathcal{A}/I \cong  \on{End}_\mathcal{A}(P_\mathcal{A})^{op}.
\]
\end{lemma}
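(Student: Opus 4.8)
The plan is to construct the isomorphism $kQ_{\mathcal A}/I \cong \on{End}_{\mathcal A}(P_{\mathcal A})^{op}$ concretely, by first building a surjective algebra homomorphism $\psi : kQ_{\mathcal A} \longrightarrow \on{End}_{\mathcal A}(P_{\mathcal A})^{op}$ and then identifying its kernel as an admissible ideal. Write $\{L_i\}_{i\in I}$ for the irreducibles (indexed by the vertices of $Q_{\mathcal A}$) and $P_i = P_{L_i}$ for their projective covers. On degree-zero generators, I would send the trivial path $e_i$ to the idempotent of $\on{End}_{\mathcal A}(P_{\mathcal A})^{op}$ given by projection onto the summand $P_i$; the orthogonal-idempotent relations $e_i e_j = \delta_{ij} e_i$ hold because the $P_i$ are pairwise non-isomorphic indecomposable projectives, so the only homomorphisms among distinct summands lie in the radical. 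The essential input here is that $\on{End}_{\mathcal A}(L_i)\cong \mathbf k$ (the standing hypothesis of the Ext-quiver definition), which forces each $\on{End}_{\mathcal A}(P_i)$ to be local and hence each $e_i$ primitive.

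Next I would define $\psi$ on the arrows. An arrow $\alpha : i \to j$ of $Q_{\mathcal A}$ is, by Definition~\ref{DefinitionExtQuiver}, a chosen basis element of $\on{Ext}^1_{\mathcal A}(L_i, L_j)$. By Lemma~\ref{LemmaExt} we have $\on{Ext}^1_{\mathcal A}(L_i, L_j) \cong \on{Hom}_{\mathcal A}(\on{Rad}(P_i), L_j) \cong \on{Hom}_{\mathcal A}(\on{Rad}(P_i)/\on{Rad}^2(P_i), L_j)$, and the discussion following Remark~\ref{rem:3.4} identifies $\on{Ext}^1_{\mathcal A}(L_{\mathcal A}, L_{\mathcal A})$ with $\on{Hom}_{\mathcal A}(\on{Rad}(P_{\mathcal A})/\on{Rad}^2(P_{\mathcal A}), L_{\mathcal A})$, which in turn (by projectivity and the definition of the radical of the endomorphism algebra) is dual to the degree-one part $\on{Rad}(\mathcal E)/\on{Rad}^2(\mathcal E)$ of $\mathcal E = \on{End}_{\mathcal A}(P_{\mathcal A})^{op}$. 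So each arrow $\alpha : i \to j$ corresponds to a radical homomorphism $P_j \to P_i$ (equivalently an element $e_i \mathcal E e_j$ modulo $\on{Rad}^2\mathcal E$), and I lift the chosen basis to an actual representative in $\on{Rad}(\mathcal E)$, defining $\psi(\alpha)$ to be that representative. Because $kQ_{\mathcal A}$ is freely generated as an algebra by $\{e_i\}$ and $\{\alpha\}$ subject only to the path relations, $\psi$ extends uniquely to a homomorphism of $\mathbf k$-algebras, and it is automatically graded-filtered in the sense that it sends length-$\ell$ paths into $\on{Rad}^\ell(\mathcal E)$.

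Surjectivity follows from a standard argument: $\mathcal E$ is generated over $\mathcal E/\on{Rad}(\mathcal E) \cong \prod_i \mathbf k$ by $\on{Rad}(\mathcal E)$, and $\on{Rad}(\mathcal E)$ is generated as a two-sided ideal by any set of representatives of a basis of $\on{Rad}(\mathcal E)/\on{Rad}^2(\mathcal E)$ (using Nakayama, since $\on{Rad}(\mathcal E)$ is nilpotent for the basic finite-dimensional quotient, or more generally topologically nilpotent); these are exactly the images $\psi(\alpha)$. Setting $I = \on{Ker}\psi$ then gives the desired isomorphism $kQ_{\mathcal A}/I \cong \mathcal E$. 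To see that $I$ is admissible, one checks the two inclusions $R_Q^m \subseteq I \subseteq R_Q^2$: the right inclusion holds because $\psi$ is injective on degrees $0$ and $1$ by construction (the arrows were chosen to be a basis, so no degree-$\le 1$ relations can occur), and the left inclusion holds because $\on{Rad}^m(\mathcal E) = 0$ for some $m$, forcing all sufficiently long paths into the kernel. I expect the main obstacle to be precisely this verification of admissibility together with the identification of arrows with a basis of $\on{Rad}(\mathcal E)/\on{Rad}^2(\mathcal E)$: one must take care that the duality of the preceding paragraph really matches the degree-one Ext-space with the radical-square quotient on the correct side, and that nilpotency of $\on{Rad}(\mathcal E)$ holds in the setting at hand, i.e.\ after passing to the basic finite-dimensional algebra Morita-equivalent to the relevant block. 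Since the statement is flagged as well-known, I would cite \cite{Assem-Simson-Skowronski} for the admissibility bookkeeping rather than reprove it in full.
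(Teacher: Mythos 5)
Your argument is correct, but it takes a more self-contained route than the paper. The paper's own (suppressed) proof reduces everything to a finite-dimensional algebra: it replaces $\mathcal{A}$ by the basic algebra $A^b$ Morita-equivalent to $\on{End}_\mathcal{A}(P_\mathcal{A})^{op}$, uses the isomorphism $A^{op}\cong \on{End}_A(A)$ together with the decomposition $A=\bigoplus_i Ae_i$ into projective covers, and then simply cites Gabriel's presentation theorem $A\cong kQ_A/I$ from \cite{Assem-Simson-Skowronski}, handling the non-basic case via $eAe\cong\on{End}_A(Ae)^{op}$. What you do instead is essentially reprove Gabriel's theorem directly in $\mathcal{A}$: you build the map $\psi$ on trivial paths and arrows, check surjectivity by generating $\on{Rad}(\mathcal{E})$ from lifts of a basis of $\on{Rad}(\mathcal{E})/\on{Rad}^2(\mathcal{E})$, and verify admissibility of $\ker\psi$ by the filtration argument (injectivity of $\psi$ in degrees $0$ and $1$ gives $I\subseteq R_Q^2$, nilpotency of the radical gives $R_Q^m\subseteq I$). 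Your version has the merit of never leaving the category $\mathcal{A}$ and of making explicit exactly where the identification of arrows with a basis of $\on{Rad}(\mathcal{E})/\on{Rad}^2(\mathcal{E})$ enters (via Lemma~\ref{LemmaExt} and Lemma~\ref{LemmaHom}); the paper's version is shorter because it outsources precisely that content to the cited reference. Two small cautions, both of which you already flag: the $\on{op}$-bookkeeping in ``$e_i\mathcal{E}e_j$'' versus $e_j\mathcal{E}e_i$ needs to be pinned down consistently with the paper's convention $e_{t(\alpha)}\alpha=\alpha e_{s(\alpha)}=\alpha$, and the nilpotency of $\on{Rad}(\mathcal{E})$ (equivalently, finite Loewy length of the $P_L$) is an implicit finiteness hypothesis that the lemma as stated also needs for the notion of admissible ideal to apply; neither point is a gap in substance.
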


\delete{
\begin{proof}
There is an algebra isomorphism $A^{op} \stackrel{\cong}{\longrightarrow} \on{End}_A(A)$ that sends $a \in A$ to $(r \longmapsto ra) \in \on{End}_A(A)$. Assuming $A$ is basic, then $A=\bigoplus_{i=1}^kAe_i$ with $(e_1, \dots, e_k)$ a complete set of primitive orthogonal idempotents, and $Ae_i \ncong Ae_j$ if $i \neq j$. The $A$-module $Ae_i$ is the projective cover of $\text{top}(Ae_i)$ which is simple. Hence $A$ is the direct sum of the projective covers of its simple modules. But we also know from Theorem~\ref{theoremGabriel} that $A \cong kQ_A/I$ where $Q_A$ is the Ext quiver of $A$ and $I$ is an admissible ideal. Therefore we have
\begin{align*}
(kQ_A/I)^{op} \cong \on{End}_A(\bigoplus_{L \in \on{Irr}(A)}P_L).
\end{align*}
If $A$ is not basic, then it is Morita equivalent to its basic associated algebra $A^b$ and the isomorphism still stands. Indeed $kQ_A=kQ_{A^b}$, and for any idempotent $e \in A$, there is an isomorphism of algebras $eAe \cong \on{End}_A(Ae)^{op}$. So if $A^b=(e_{i_1}+\dots +e_{i_n})A(e_{i_1}+\dots +e_{i_n})$ where the $e_{i_j}$ are orthogonal idempotents, then $A^b \cong \on{End}_A(\bigoplus_{j=1}^n Ae_{i_j})^{op} \cong \on{End}_A(\bigoplus_{L \in \on{Irr}(A)} P_L)^{op}$ (cf. \cite[I. Lemma 6.5]{Assem-Simson-Skowronski}).
\end{proof}}

\begin{definition}
An abelian $\mathbf{k}$-category $\mathcal{A}$ with finitely many irreducibles is \textbf{Koszul} if the Yoneda algebra $\on{Ext}_\mathcal{A}^*(L_\mathcal{A}, L_\mathcal{A})$ is generated in degree $0$ and $1$. 
\end{definition}

In \cite{Green-Villa}, a graded algebra $A$ is called Koszul  if $\on{Ext}_A^*(A/\on{Rad}(A), A/\on{Rad}(A))$ is generated in degree $0$ and $1$, which is equivalent to $A/\on{Rad}(A)$ having a linear projective resolution as a graded $A$-module. Therefore $\mathcal{A}$ is Koszul if and only if $\mathcal{E}(\mathcal{A})$ is Koszul in the sense of \cite{Green-Villa}.

An algebra $A=kQ/I$ is called quadratic if $I=\langle I_2 \rangle$ where $I_2$ is the subset of paths of length $2$ in $I$. There is a natural pairing $kQ_1\otimes_\mathbf{k} kQ^{op}_1 \longrightarrow \mathbf{k}$ given by
\[
\langle \alpha, \overline{\beta} \rangle=\delta_{\alpha, \beta}
\]
where $\overline{\beta}$ is the opposite of $\beta$ in $kQ^{op}$. As $kQ_2=kQ_1 \otimes_{kQ_0}kQ_1$, we have a pairing  $kQ_2\otimes_\mathbf{k} kQ^{op}_2 \longrightarrow \mathbf{k}$ given by
\[
\langle \alpha \otimes \beta, \overline{\gamma} \otimes \overline{\delta} \rangle=\langle \alpha, \overline{\delta} \rangle \langle \beta, \overline{\gamma} \rangle.
\]
We write $I_2^\perp=\{\alpha \in kQ^{op}_2 \ | \ \langle I_2, \alpha \rangle=0  \}$. The quadratic dual of a quadratic algebra $A$ is then $A^!=kQ^{op}/\langle I_2^\perp \rangle$.

\begin{proposition}[\cite{Villa}]\label{prop:Koszul}
If a graded algebra $A$ is Koszul then it is quadratic and
\[
\on{Ext}_A^*(L_A, L_A)=A^!.
\]
\end{proposition}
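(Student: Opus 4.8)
The plan is to pass to the minimal graded projective resolution of $L_A$ and exploit the extra internal grading that the grading of $A$ induces on the Yoneda algebra. Write $A=\bigoplus_{n\ge 0}A_n$ with $A_0$ semisimple, so that $L_A=A/\on{Rad}(A)=A_0$. Choosing a minimal graded projective resolution $\cdots\to P^2\to P^1\to P^0=A\to L_A\to 0$, minimality forces each differential to have entries in $\on{Rad}(A)$, so that $\on{Ext}^i_A(L_A,L_A)=\on{Hom}_A(P^i,L_A)$ carries a second (internal) grading $j$; I will write $E^{i,j}$ for the corresponding bigraded pieces of $E:=\on{Ext}_A^*(L_A,L_A)$. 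Two facts come for free: $E^{i,j}=0$ whenever $j<i$ (each syzygy raises the internal degree by at least one), and, because $A=kQ_{\mathcal A}/I$ is generated in internal degrees $0$ and $1$, one has $E^{1,j}=0$ for $j\ne 1$, i.e. $E^1=E^{1,1}$.

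First I would prove quadraticity. The Yoneda product is bigraded additively, adding both the cohomological index $i$ and the internal index $j$, so the subalgebra of $E$ generated by $E^0=E^{0,0}$ and $E^1=E^{1,1}$ is concentrated on the diagonal $\bigoplus_i E^{i,i}$. The Koszul hypothesis says precisely that $E^0$ and $E^1$ generate $E$, hence $E=\bigoplus_i E^{i,i}$; in particular $E^{2,j}=0$ for $j>2$. Since $\dim_{\mathbf{k}}E^{2,j}$ counts the minimal degree-$j$ generators of the defining ideal $I$ of $A$, this vanishing says that $I$ is generated by $I_2$, i.e. $A$ is quadratic. This step uses the hypothesis most directly and is the cleanest part.

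Next I would identify $E$ with the quadratic dual. Having $A=kQ_{\mathcal A}/\langle I_2\rangle$ quadratic, the homomorphism $\varphi$ of Lemma~\ref{kQandExt} is surjective (this is exactly the Koszul generation statement), and the defining pairing $\langle\alpha,\overline{\beta}\rangle=\delta_{\alpha,\beta}$ matches each arrow with its opposite, producing a surjective graded algebra map with source built on $kQ^{op}$. To pin down its kernel I would read the start $P^2\to P^1\to P^0$ of the minimal resolution through the induced pairing $\langle\alpha\otimes\beta,\overline{\gamma}\otimes\overline{\delta}\rangle=\langle\alpha,\overline{\delta}\rangle\langle\beta,\overline{\gamma}\rangle$: this identifies the space of degree-$2$ relations of $E$ with $I_2^\perp$, so the map factors as a surjection $A^!=kQ^{op}/\langle I_2^\perp\rangle\twoheadrightarrow E$ that is an isomorphism in degrees $\le 2$.

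The hard part will be promoting this to an isomorphism in all degrees. The diagonal concentration $E=\bigoplus_i E^{i,i}$ established above is equivalent to the minimal resolution being \emph{linear}, $P^i\cong A\otimes_{A_0}(E^i)^{*}$ with generators in internal degree $i$. I would then recognize $P^\bullet$ as the Koszul complex $A\otimes_{A_0}(A^!)^{*}$ built formally from $A$ and $A^!$: its acyclicity gives $E^i=\on{Hom}_A(P^i,L_A)=(A^!)_i$ in every degree, and tracking the differentials shows the Yoneda product coincides with the multiplication of $A^!$, so the surjection $A^!\twoheadrightarrow E$ is an isomorphism. The genuine obstacle is precisely this acyclicity of the Koszul complex — the homological form of Koszulity — which must be deduced from the stated hypothesis that $E$ is generated in cohomological degrees $0$ and $1$. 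Everything in degrees $\le 2$ is formal and pairing-theoretic, whereas the passage to arbitrary cohomological degree is exactly where the Koszul condition must be genuinely invoked, and where one must take care (for instance in any Hilbert-series comparison $h_E(-t)\,h_A(t)=\dim_{\mathbf{k}}A_0$) not to silently assume the conclusion.
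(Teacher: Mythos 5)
You should first note that the paper contains no proof of this proposition to compare against: it is imported verbatim from \cite{Villa}, so your attempt can only be judged against the standard literature argument (Green--Mart\'{\i}nez Villa, Beilinson--Ginzburg--Soergel), which is indeed the route you follow. Your first half is complete and correct: the bigrading $E^{i,j}$ from a minimal graded resolution, the vanishing $E^{i,j}=0$ for $j<i$, the identification $E^1=E^{1,1}$ (using that $A=kQ_{\mathcal A}/I$ is generated in internal degrees $0$ and $1$), the conclusion that generation in cohomological degrees $0,1$ forces $E=\bigoplus_i E^{i,i}$, hence $E^{2,j}=0$ for $j\neq 2$ and $I=\langle I_2\rangle$.

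The second half is where you stop short, and your diagnosis of the remaining obstacle is slightly misplaced: you do not need to establish acyclicity of an a priori defined Koszul complex, because the minimal resolution $P^\bullet$ is already acyclic --- what remains is only to identify its terms. Concretely, write $V=kQ_1$ and $\Omega^{i+1}=\on{Ker}(P^i\to P^{i-1})$ (all tensor products over $A_0=kQ_0$). The diagonal concentration $E=\bigoplus_i E^{i,i}$ you have already proved is equivalent to each $P^i$ being generated in internal degree $i$, i.e.\ to $\Omega^{i+1}$ being generated by its degree-$(i+1)$ component; an elementary induction on $i$ then computes $(\Omega^{i+1})_{i+1}=\bigcap_{j=0}^{i-1} V^{\otimes j}\otimes I_2\otimes V^{\otimes(i-1-j)}$, which under the paper's pairing is exactly $\bigl((A^!)_{i+1}\bigr)^*$. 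Hence $P^{i+1}\cong A\otimes_{A_0}\bigl((A^!)_{i+1}\bigr)^*$ and $E^{i+1}\cong (A^!)_{i+1}$, and tracking the differential identifies the Yoneda product with the multiplication of $A^!$ (modulo the $kQ^{op}$ convention built into the paper's definition of $A^!$, which must be matched to your composition order for Yoneda products). Your instinct to distrust a Hilbert-series shortcut is sound --- the identity $h_A(t)\,h_{A^!}(-t)=\dim_{\mathbf k}A_0$ is itself a consequence of Koszulity and would be circular here --- but the syzygy induction above closes the gap without it, so the missing step is standard rather than a flaw in the strategy.
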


\section{The triplet vertex operator algebra $\mathcal{W}(p)$ and its quiver}\label{QuiverWp}
We recall the triplet vertex operator algebra $\mathcal{W}(p)$ and then study its simple modules. The main references for the following construction are \cite[\dots]{Adamovic, Adamovic-Milas1, Adamovic-Milas2}.

Let $p\geq 2$ be an integer. Let $L$ be the rank one lattice $\mathbb{Z}\alpha$ with $\langle \alpha,\alpha \rangle=2p$. We denote by $(V_L, Y(\cdot, x), \mathbf{1}, \omega)$ the corresponding lattice vertex operator algebra (cf. \cite{Lepowsky-Li}). As a vector space,
\begin{align*}
V_L =U(\hat{\mathfrak{h}}^-) \otimes \mathbb{C}[L]
\end{align*}
where $ \mathbb{C}[L]$ is the group algebra of $L$, $\hat{\mathfrak{h}}$ is the affinization of the one-dimensional algebra $\mathfrak{h}$ spanned by $\alpha$, $\hat{\mathfrak{h}}^-=\mathfrak{h}\otimes t^{-1}\mathbb{C}[t^{-1}]$, the vacuum vector is
\begin{align*}
\mathbf{1}=1 \otimes 1,
\end{align*}
and the conformal vector is
\begin{align*}
\omega=\frac{\alpha(-1)^2}{4p}\mathbf{1}+\frac{p-1}{2p}\alpha(-2)\mathbf{1}.
\end{align*}
 
\begin{remark}
The usual conformal vector is $\omega=\frac{1}{2.2p}\alpha(-1)^2\mathbf{1}$ and we would get a vertex operator algebra of central charge $\on{dim} \mathfrak{h}=1$. By choosing $\omega$ as above, we obtain a vertex operator algebra of central charge $c_{p,1}=1-\frac{6(p-1)^2}{p}$.
\end{remark}

For $i \in \mathbb{Z}$, define $\gamma_i=\frac{i}{2p}\alpha \in L^\circ$, where $L^{\circ}=\frac{\mathbb{Z}}{2p}\alpha$ is the dual lattice of $L$. Then it is known (cf. \cite[\dots]{D, DL, Lepowsky-Li}) that the set $\{V_{L+\gamma_i} \ | \ i=0, \dots ,2p-1\}$ is the set of all irreducible $V_L$-modules up to equivalence. Furthermore, the contragredient $V_L$-modules of those simple modules are given by $V_{L+\gamma_i}' \cong V_{L+\gamma_{2p-2-i}}$ for $i=0, \dots, 2p-2$, and $V_{L+\gamma_{2p-1}}$ is self-dual (cf. \cite{Adamovic-Milas2}).

Because $V_L$ is a vertex operator algebra, one can define:
\begin{equation*}
\begin{cases}
      \displaystyle Y(e^\alpha, x)=\sum_{n \in \mathbb{Z}}e^\alpha_n x^{-n-1} \in \mathrm{End}(V_L)[[x, x^{-1}]],\\
      \displaystyle Y(e^{-\frac{\alpha}{p}}, x)=\sum_{n \in \frac{1}{p}\mathbb{Z}}e^{-\frac{\alpha}{p}}_n x^{-n-1} \in \mathrm{End}(V_{L^{\circ}})[[x^{\frac{1}{p}}, x^{-\frac{1}{p}}]].
    \end{cases}       
\end{equation*}
We then define the operators
\begin{equation*}
\begin{cases}
      \displaystyle Q=e^\alpha_0 : V_L \longrightarrow V_L, \\
      \displaystyle \widetilde{Q}=e^{-\frac{\alpha}{p}}_0 : V_{L^{\circ}} \longrightarrow V_{L^{\circ}}.
    \end{cases}       
\end{equation*}

\begin{remark}
$Q$ is a derivation, i.e., $Q(a_n b)=(Qa)_n b+a_n(Qb)$ for any $a, b \in V_L$, $n \in \mathbb{Z}$.
\end{remark}

The triplet vertex operator algebra is defined as, noting that $ V_L\subseteq V_{L^{\circ}}$, 
\begin{equation*}
\mathcal{W}(p)=V_L\cap\on{Ker}\widetilde{Q}.    
\end{equation*}
By setting $F=e^{-\alpha}$, $H=QF$ and $E=Q^2F$, one can show that $\mathcal{W}(p)$ is a simple vertex operator algebra strongly generated by $E$, $F$, $H$ and $\omega$, i.e., it is spanned by:
\begin{equation*}
(v_1)_{n_1} (v_2)_{n_2}  \dots (v_k)_{n_k} \mathbf{1}, \text{ with } v_i \in \{E,F,H,\omega\} \text{  and  } n_i<0 \text{  for all } 1 \leq i \leq k.
\end{equation*}
It was showed in \cite{Adamovic-Milas2} that $\mathcal{W}(p)$ is $C_2$-cofinite and  an upper bound for the dimension of $R(\mathcal{W}(p))$ was given. Later in  \cite{Adamovic-Milas4}, the dimension of $R(\mathcal{W}(p))$ was proved to be $6p-1$ (cf. Section~\ref{SectionZhuAlgebraA(W(p))}).

It was proved in \cite{Adamovic-Milas2} that the simple $\mathcal{W}(p)$-modules are the following:
\begin{equation*}
\begin{cases}
X_s^+= \overline{V_{L+\gamma_{s-1}}} \text{ for } s=1, \dots ,p-1, \\[5pt]
 X_s^-=\overline{V_{L+\gamma_{2p-s-1}}} \text{ for } s=1, \dots ,p-1,\\[5pt]
 X_p^+=V_{L+\gamma_{p-1}},\\[5pt]
 X_p^-= V_{L+\gamma_{2p-1}}.
\end{cases}       
\end{equation*} 
where $\overline{V_{L+\gamma_k}}$ is the submodule of $V_{L+\gamma_k}$ generated by the singular vectors. In particular, $\mathcal{W}(p)=X_1^+= \overline{V_{L+\gamma_{0}}}$. The projective covers $P_s^\pm$ of $X_s^\pm$ for all $s$ are determined in \cite{Nagatomo-Tsuchiya}.  There were concerns on some of the arguments in the construction of the projective modules $P_s^\pm$ in \cite[Prop. 4.1]{Nagatomo-Tsuchiya}. However an alternative approach in \cite[Thm. 7.9]{McRae-Yang} confirms that the projective modules $P_s^\pm$ do have the required structures described below. These structures are the basis of the computations of the Yoneda algebras. The projective modules for the restricted quantum group $\bar{U}_{e^{\pi i/p}}(sl_2)$ were given in \cite{GSTF}.

For $1 \leq s \leq p-1$, the projective covers  $P_s^\pm$ \delete{ is given by the direct sum $V_{L+\gamma_{s-1}} \bigoplus V_{L+\gamma_{2p-s-1}}$ where the action of $\mathcal{W}(p)$ is modified by a well-chosen operator. In particular, for $P_s^+$ (resp. $P_s^-$), the action of $L(0)$ is modified on $V_{L+\gamma_{2p-s-1}}$ (resp. $V_{L+\gamma_{s-1}}$) and untouched elsewhere. Finally it was also shown in \cite{Nagatomo-Tsuchiya} that for $1 \leq s \leq p-1$,}
are rigid (the socle filtrations and radical filtration coincide) and  the socle series of $P_s^+$ and $P_s^-$ are given by the following diagrams:
\[
 P_s^+: \quad \xymatrix{ & X^{+}_s \ar[rd]\ar[ld]& \\
 X^-_s\ar[rd]&& X^{-}_s\ar[ld]\\
 &X^{+}_s &} \hspace{1cm} P_s^-: \quad \xymatrix{ & X^{-}_s \ar[rd]\ar[ld]& \\
 X^+_s\ar[rd]&& X^{+}_s\ar[ld]\\
 &X^{-}_s &.}\]
 \delete{\[\begin{tikzpicture}[scale=1, transform shape]
\tikzset{>=stealth}
\node (1) at (0,1.5) []{$X_s^+$};
\node (2) at (-1,0) []{$X_s^-$};
\node (3) at (1,0) []{$X_s^-$};
\node (4) at (0,-1.5) []{$X_s^+$};

\draw[->]  (1) -- (2);
\draw[->]  (1) -- (3);
\draw[->]  (2) -- (4);
\draw[->]  (3) -- (4) ;
\end{tikzpicture} \hspace{2cm}  \begin{tikzpicture}[scale=1, transform shape]
\tikzset{>=stealth}
\node (1) at (0,1.5) []{$X_s^-$};
\node (2) at (-1,0) []{$X_s^+$};
\node (3) at (1,0) []{$X_s^+$};
\node (4) at (0,-1.5) []{$X_s^-$};

\draw[->]  (1) -- (2);
\draw[->]  (1) -- (3);
\draw[->]  (2) -- (4);
\draw[->]  (3) -- (4) ;
\end{tikzpicture}
\]}
Furthermore, we have $P_p^+=X_p^+$ and $P_p^-=X_p^-$.
\delete{ We can see in particular that $P_s^\pm$ is rigid (i.e., its socle series equals its radical series) for all $s$. }

Let us compute the Ext quiver of $\mathcal{W}(p)$. We will determine the extensions of the simple $\mathcal{W}(p)$-modules in the category $\mathcal{W}(p)\Mod^{log}$ of logarithmic $\mathcal{W}(p)$-modules. 
Since $\mathcal W(p)\Mod^{log}$ is a Serre subcategory of $ \mathcal W(p)\Mod^w$, then $ \on{Ext}_{\mathcal W(p)}^1(M, N)$ in $ \mathcal W(p)\Mod^{log}$ and $ \mathcal W(p)\Mod^w$ are the same. In particular, it is also the same as in $ \mathcal W(p)\Mod^{adm}$. So we simply use the notation $ \on{Ext}_{\mathcal W(p)}^1(M, N)$ without specifying which category we are working in (not in the category of ordinary modules).

The following result is proved in \cite{Nagatomo-Tsuchiya}. However we will give a more straightforward proof.

\begin{proposition}\label{PropExtW(p)} 
For the triplet vertex operator algebra $\mathcal{W}(p)$ we have
 \begin{align*}
\on{Ext}_{\mathcal{W}(p)}^1(X_{s_1}^{\epsilon_1}, X_{s_2}^{\epsilon_2})= 
\begin{cases}
\cc^2 \text{ if } 1 \leq s_1=s_2 \leq p-1, \epsilon_1 = -\epsilon_2,\\[5pt]
\{0\} \text{ otherwise}. 
\end{cases}
\end{align*}

\end{proposition}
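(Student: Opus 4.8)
The plan is to compute $\on{Ext}^1_{\mathcal{W}(p)}(X_{s_1}^{\epsilon_1}, X_{s_2}^{\epsilon_2})$ by reducing it, via Lemma~\ref{LemmaExt}, to a $\on{Hom}$-space involving the radical of a projective cover. Concretely, for each simple module $X_{s_1}^{\epsilon_1}$ with projective cover $P_{s_1}^{\epsilon_1}$, Lemma~\ref{LemmaExt} gives
\[
\on{Ext}^1_{\mathcal{W}(p)}(X_{s_1}^{\epsilon_1}, X_{s_2}^{\epsilon_2}) = \on{Hom}_{\mathcal{W}(p)}\bigl(\on{Rad}(P_{s_1}^{\epsilon_1}), X_{s_2}^{\epsilon_2}\bigr),
\]
since $\on{Ext}^0$ of two simples is $\cc$ when they are isomorphic and $0$ otherwise (all endomorphism rings being $\cc$ by Schur's lemma). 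Because each $X_{s_2}^{\epsilon_2}$ is simple, this $\on{Hom}$-space is governed entirely by the top (head) of $\on{Rad}(P_{s_1}^{\epsilon_1})$, i.e.\ by $\on{Rad}(P_{s_1}^{\epsilon_1})/\on{Rad}^2(P_{s_1}^{\epsilon_1})$, and its dimension counts how many copies of $X_{s_2}^{\epsilon_2}$ appear as direct summands of that semisimple layer.

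First I would treat the boundary cases $s_1 = p$. Here $P_p^\pm = X_p^\pm$ is simple and projective, so $\on{Rad}(P_p^\pm) = 0$ and every $\on{Ext}^1$ out of $X_p^\pm$ vanishes; this matches the ``otherwise'' clause. For $1 \leq s_1 \leq p-1$ I would read off the radical layers directly from the rigid socle/radical diagrams provided for $P_s^+$ and $P_s^-$. For $P_s^+$, whose Loewy structure is $X_s^+$ on top, then $X_s^- \oplus X_s^-$ in the middle, then $X_s^+$ at the bottom, the radical $\on{Rad}(P_s^+)$ has head $X_s^- \oplus X_s^-$ and hence $\on{Rad}(P_s^+)/\on{Rad}^2(P_s^+) \cong X_s^- \oplus X_s^-$. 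By Lemma~\ref{LemmaHom} (with each $\on{End}_{\mathcal{W}(p)}(L) = \cc$), this gives $\on{Hom}_{\mathcal{W}(p)}(\on{Rad}(P_s^+), X_{s_2}^{\epsilon_2}) = \cc^2$ precisely when $X_{s_2}^{\epsilon_2} = X_s^-$, and $0$ for all other simples. Symmetrically, $\on{Ext}^1$ out of $X_s^-$ is $\cc^2$ exactly into $X_s^+$. Combining these reproduces the stated formula: $\cc^2$ when $s_1 = s_2 \leq p-1$ and $\epsilon_1 = -\epsilon_2$, and $\{0\}$ otherwise.

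The main obstacle is not the homological algebra, which is essentially bookkeeping once the projective covers are known, but rather justifying the input: that the $P_s^\pm$ really have the claimed rigid Loewy diagrams, with the \emph{middle layer being a direct sum of two copies} of the opposite simple (this is exactly what yields $\cc^2$ rather than $\cc$). I would cite the structure of the projective covers from \cite{Nagatomo-Tsuchiya} and \cite[Thm.~7.9]{McRae-Yang}, as flagged in the surrounding text, rather than reprove it. One subtlety worth addressing explicitly is the identification of the head of $\on{Rad}(P)$ with the second radical layer $\on{Rad}(P)/\on{Rad}^2(P)$: since $\on{Hom}_{\mathcal{W}(p)}(\on{Rad}(P), L)$ factors through the top of $\on{Rad}(P)$ for any simple $L$, and since the diagrams show this top to be semisimple of the indicated form, the reduction in Lemma~\ref{LemmaExt} combined with Lemma~\ref{LemmaHom} completes the argument. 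A minor check is that passing to the Serre subcategory $\mathcal{W}(p)\Mod^{log}$ does not alter these $\on{Ext}^1$ groups, which is already recorded in the remark preceding the statement.
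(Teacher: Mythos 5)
Your proposal is correct and follows essentially the same route as the paper: both apply Lemma~\ref{LemmaExt} to reduce $\on{Ext}^1$ to $\on{Hom}_{\mathcal{W}(p)}(\on{Rad}(P_{s_1}^{\epsilon_1})/\on{Rad}^2(P_{s_1}^{\epsilon_1}), X_{s_2}^{\epsilon_2})$, read off the layer $X_{s_1}^{-\epsilon_1}\oplus X_{s_1}^{-\epsilon_1}$ from the Loewy diagrams of $P_s^{\pm}$ (and $\on{Rad}(P_p^{\pm})=0$), and conclude by Schur's lemma. Your explicit appeal to Lemma~\ref{LemmaHom} and the remark on the Serre subcategory are just slightly more detailed versions of what the paper leaves implicit.
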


\begin{proof} By Lemma~\ref{LemmaExt}, we have 
\[\on{Ext}_{\mathcal{W}(p)}^1(X_{s_1}^{\epsilon_1}, X_{s_2}^{\epsilon_2}) = \on{Hom}_{\mathcal W(p)}(\on{Rad}(P_{s_1}^{\epsilon_1}), X_{s_2}^{\epsilon_2}) = \on{Hom}_{\mathcal W(p)}(\on{Rad}(P_{s_1}^{\epsilon_1})/\on{Rad}^2(P_{s_1}^{\epsilon_1}), X_{s_2}^{\epsilon_2}),\] 
because the radical of a module is the intersection of the kernels of all homomorphisms from the module to simple modules. Using the module diagrams given before and the fact that $ \on{Rad}(P_{p}^{\pm})=0$, we see that
 \begin{align*}
\on{Ext}_{\mathcal{W}(p)}^1(X_{s_1}^{\epsilon_1}, X_{s_2}^{\epsilon_2})= 
\begin{cases}
\on{Hom}_{\mathcal W(p)}(X_{s_1}^{-\epsilon_1} \oplus X_{s_1}^{-\epsilon_1}, X_{s_2}^{\epsilon_2}) \text{ if } s_1 \neq p,\\[5pt]
\{0\} \text{ if } s_1=p,
\end{cases}
\end{align*}
and the proposition follows because the $X_s^{\pm}$ are irreducible.

\delete{Let $0 \longrightarrow X_s^+ \longrightarrow E \stackrel{f}{\longrightarrow} X_s^+ \longrightarrow 0$ be a non-trivial self-extension of $X_s^+$, $1 \leq s \leq p-1$. Let $\pi : P_s^+ \longrightarrow X_s^+$ be the projective cover map. Because $f$ is surjective and $P_s^+$ is projective, there exists $\varphi :P_s^+ \longrightarrow E$ such that $\pi = f \circ \varphi$. Furthermore, $\varphi$ is surjective because the extension is not trivial and so $P_s^+/\text{Ker }\varphi \cong E$. 

Let $X$ be the preimage of $X_s^+$ in $P_s^+$ by $\varphi$. Because $\text{Ker }f =X_s^+$, we have $X/\text{Ker }\varphi \cong X_s^+$, therefore $X_s^+ \cong E/\text{Ker }f \cong (P_s^+/\text{Ker }\varphi)/X_s^+ \cong (P_s^+/\text{Ker }\varphi)/(X/\text{Ker }\varphi) \cong P_s^+/X$. But we also know that $X_s^+ \cong P_s^+/S_2(P_s^+)$ with $S_2(P_s^+)$ being the second term in the socle sequence of $P_s^+$. 

Furthermore, the diagram of the socle sequence given above states that $\text{Ker }\pi= S_2(P_s^+)$ and so $\varphi(S_2(P_s^+)) \subseteq \text{Ker }f=X_s^+$. It follows that $S_2(P_s^+) \subseteq X$. Based on the previous paragraph, we get that $X=S_2(P_s^+)$, and the restriction of $\varphi$ to $S_2(P_s^+)$ gives a surjection $S_2(P_s^+) \longrightarrow X_s^+$. This implies that $X_s^+$ is a quotient of $S_2(P_s^+)$. But based on the diagram, the only simple quotient of $S_2(P_s^+)$ is $X_s^-$, which is not isomorphic to $X_s^+$. Hence the extension is trivial and $\on{Ext}_{\mathcal{W}(p)}^1(X_s^+, X_s^+)=\{0\}$. The proof is similar for $X_s^-$.

We know that $X_p^\pm=P_p^\pm$ is a projective module, and so $\on{Ext}_{\mathcal{W}(p)}^1(X_p^\pm, M)=\{0\}$ for any module $M$. Furthermore, by applying Lemma~\ref{LemmaExt}, we obtain $\on{Ext}_{\mathcal{W}(p)}^1(X_s^{\epsilon_1}, X_p^{\pm})=\on{Hom}_{\mathcal{W}(p)}(\on{Rad}(P_s^{\epsilon_1}), X_p^{\pm})$. When $s < p$, using the diagrams of socle sequences, we see that $\on{Ext}_{\mathcal{W}(p)}^1(X_s^{\epsilon_1}, X_p^{\pm})=\{0\}$. If $s=p$, $\on{Rad}(P_p^{\epsilon_1})=\on{Rad}(X_p^{\epsilon_1})=\{0\}$, so the result is clear.}
\end{proof}

\delete{The Ext quiver of the vertex operator algebra $\mathcal{W}(p)$ is constructed 
like in the case of classical algebras, i.e., the vertices of the quiver are the irreducible modules of $\mathcal{W}(p)$, and the number of arrows $i \longrightarrow j$ is given by the dimension of $\on{Ext}_{\mathcal{W}(p)}^1(i, j)$.} 

Based on the previous proposition, we get that the Ext quiver $Q_{\mathcal{W}(p)}$ of $\mathcal{W}(p)$ is:

\begin{equation} \label{Ext-quiver-wp}
\begin{tikzpicture}[baseline=(current  bounding  box.center), scale=1,  transform shape]
\tikzset{>=stealth}
\tikzstyle{point}=[circle,draw,fill]

\node (1) at ( 0,0) []{$\bullet$};
\node (2) at ( 0,-2) []{$\bullet$};
\node (3) at ( 2,0) [] {$\bullet$};
\node (4) at ( 2,-2) [] {$\bullet$};
\node (5) at ( 6,0) [] {$\bullet$};
\node (6) at ( 6,-2) [] {$\bullet$};
\node (7) at ( 9,0) [] {$\bullet$};
\node (8) at ( 9,-2) [] {$\bullet$};

\node at (0,0.5)  {$X_1^+$};
\node at (0,-2.5) {$X_1^-$};
\node at (2,0.5)  {$X_2^+$};
\node at (2,-2.5) {$X_2^-$};
\node at (6,0.5)  {$X_{p-1}^+$};
\node at (6,-2.5) {$X_{p-1}^-$};
\node at (9,0.5)  {$X_p^+$};
\node at (9,-2.5) {$X_p^-$};

\node at (4,-1) {$\dots \dots$};

\draw [decoration={markings,mark=at position 1 with
    {\arrow[scale=1.2,>=stealth]{>}}},postaction={decorate}] (-0.25,-1.8) -- (-0.25,-0.2);
\draw [decoration={markings,mark=at position 1 with
    {\arrow[scale=1.2,>=stealth]{>}}},postaction={decorate}] (-0.1,-1.8) -- (-0.1,-0.2);
\draw [decoration={markings,mark=at position 1 with
    {\arrow[scale=1.2,>=stealth]{>}}},postaction={decorate}]  (0.25,-0.2) -- (0.25,-1.8);
\draw [decoration={markings,mark=at position 1 with
    {\arrow[scale=1.2,>=stealth]{>}}},postaction={decorate}]  (0.1,-0.2) -- (0.1,-1.8);

\draw [decoration={markings,mark=at position 1 with
    {\arrow[scale=1.2,>=stealth]{>}}},postaction={decorate}] (1.75,-1.8) -- (1.75,-0.2);
\draw [decoration={markings,mark=at position 1 with
    {\arrow[scale=1.2,>=stealth]{>}}},postaction={decorate}] (1.9,-1.8) -- (1.9,-0.2);
\draw [decoration={markings,mark=at position 1 with
    {\arrow[scale=1.2,>=stealth]{>}}},postaction={decorate}]  (2.25,-0.2) -- (2.25,-1.8);
\draw [decoration={markings,mark=at position 1 with
    {\arrow[scale=1.2,>=stealth]{>}}},postaction={decorate}]  (2.1,-0.2) -- (2.1,-1.8);

\draw [decoration={markings,mark=at position 1 with
    {\arrow[scale=1.2,>=stealth]{>}}},postaction={decorate}] (5.75,-1.8) -- (5.75,-0.2);
\draw [decoration={markings,mark=at position 1 with
    {\arrow[scale=1.2,>=stealth]{>}}},postaction={decorate}] (5.9,-1.8) -- (5.9,-0.2);
\draw [decoration={markings,mark=at position 1 with
    {\arrow[scale=1.2,>=stealth]{>}}},postaction={decorate}]  (6.25,-0.2) -- (6.25,-1.8);
\draw [decoration={markings,mark=at position 1 with
    {\arrow[scale=1.2,>=stealth]{>}}},postaction={decorate}]  (6.1,-0.2) -- (6.1,-1.8);
\end{tikzpicture}
\end{equation}
We notice that it is a wild quiver.

\section{The algebras $A(\mathcal{W}(p))$ and $\on{gr}A(\mathcal{W}(p))$ and their quivers}\label{Section5}
\subsection{The Zhu algebra $A(\mathcal{W}(p))$}\label{SectionZhuAlgebraA(W(p))}
Let $A(\mathcal{W}(p))$ be the Zhu algebra of $\mathcal{W}(p)$ (cf. Section~\ref{SectionC2ZhuAlgebra}). It follows that $A(\mathcal{W}(p))$ is an associative algebra with product given in Definition~\ref{ZhuAlgebra}. It is proved in \cite{Adamovic-Milas4} that $\dim R(\mathcal{W}(p))= \dim A(\mathcal{W}(p)) = 6p-1$.  
Therefore the construction in \cite[Thm. 5.9]{Adamovic-Milas2} provides that $A(\mathcal{W}(p))$ has the following decomposition:
\begin{equation*}\label{eq:1}
A(\mathcal{W}(p)) \cong \bigoplus_{i=2p}^{3p-1}M_{h_i}(\cc) \oplus \bigoplus_{i=1}^{p-1}I_{h_i}(\cc) \oplus \mathbb{C},
\end{equation*}
where $M_{h_i}(\cc) \cong M_2(\cc)$ and $I_{h_i}(\cc) \cong \cc[x]/(x^2)$ for all $i$ (here we use the notations of \cite{Adamovic-Milas2}). Thus the following proposition is  a consequence of Theorem 1.1. in \cite{Adamovic-Milas4}. 

\delete{:
\begin{equation*}
 [a] [b]= [a \ast b]= \Big[ \mathrm{Res}_x\Big(Y(a, x)\frac{(1+x)^{\on{wt}(a)}}{x}b\Big)\Big]=\Big[\sum_{i=0}^\infty \binom{\on{wt}(a)}{i}a_{i-1}b \Big]
\end{equation*}
where $[\cdot]$ is the equivalence class in the quotient. }

\begin{proposition} [\cite{Adamovic-Milas4}]\label{grA(W(p))=R(W(p))}
For the triplet vertex operator algebra $\mathcal{W}(p)$, there is an isomorphism of Poisson algebras:
\begin{align*}
\eta_{\mathcal{W}(p)}: R(\mathcal{W}(p)) \stackrel{\cong}{\longrightarrow} \on{gr}A(\mathcal{W}(p)).
\end{align*} 
\end{proposition}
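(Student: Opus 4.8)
The plan is to avoid computing the map $\eta_{\mathcal{W}(p)}$ explicitly and instead deduce the isomorphism from surjectivity together with an equality of dimensions. First I would invoke Proposition~\ref{etaVsurjective}, which already guarantees that $\eta_{\mathcal{W}(p)}\colon R(\mathcal{W}(p)) \to \on{gr}A(\mathcal{W}(p))$ is a well-defined \emph{surjective} homomorphism of Poisson algebras (this holds for any vertex operator algebra). Consequently it suffices to check that the source and target are finite-dimensional of the same dimension: a surjective linear map between finite-dimensional vector spaces of equal dimension is bijective, and a bijective Poisson algebra homomorphism is an isomorphism of Poisson algebras.

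Second, I would compute $\dim \on{gr}A(\mathcal{W}(p))$. Since $A(\mathcal{W}(p))$ is a finite-dimensional filtered algebra with ascending filtration $\{F^n A(\mathcal{W}(p))\}$, passing to the associated graded preserves total dimension, so $\dim \on{gr}A(\mathcal{W}(p)) = \sum_i \dim\bigl(F^i A(\mathcal{W}(p))/F^{i-1}A(\mathcal{W}(p))\bigr) = \dim A(\mathcal{W}(p))$. By the result of Adamovi\'c--Milas recalled just above the statement, $\dim A(\mathcal{W}(p)) = 6p-1$, and by the same source $\dim R(\mathcal{W}(p)) = 6p-1$. Combining these gives $\dim R(\mathcal{W}(p)) = 6p-1 = \dim \on{gr}A(\mathcal{W}(p))$, so the surjection $\eta_{\mathcal{W}(p)}$ is forced to be an isomorphism.

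I do not anticipate a genuine obstacle: the substantive input — the equality $\dim R(\mathcal{W}(p)) = \dim A(\mathcal{W}(p)) = 6p-1$ — is precisely what is imported from \cite{Adamovic-Milas4}, and surjectivity is the general fact of Proposition~\ref{etaVsurjective}. The only point deserving a word of care is that, for a general vertex operator algebra, surjectivity of $\eta_V$ is not enough to conclude injectivity (indeed $\eta_V$ need not be injective in general, as emphasized in the introduction); it is exactly the numerical coincidence special to $\mathcal{W}(p)$ that closes the argument. One should also make sure the filtration used is exhaustive and bounded below so that the dimension-preservation step for $\on{gr}$ is valid, which is immediate here since $A(\mathcal{W}(p))$ is finite-dimensional.
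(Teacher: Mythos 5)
Your proposal is correct and matches the paper's (implicit) argument: the paper simply cites $\dim R(\mathcal{W}(p))=\dim A(\mathcal{W}(p))=6p-1$ from \cite{Adamovic-Milas4} together with the general surjectivity of $\eta_V$ from Proposition~\ref{etaVsurjective}, and your write-up supplies exactly the intended deduction (dimension preservation under $\on{gr}$ plus surjectivity in equal finite dimension forces bijectivity).
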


Based on the description in \cite{Adamovic-Milas2}, the irreducible $\mathcal{W}(p)$-modules are $\mathbb{N}$-graded admissible with non-zero lowest weight space, and these lowest weight spaces give irreducible $A(\mathcal{W}(p))$-modules (cf. \cite{DLM1, DLM2, Zhu}).

Let us consider the Ext quiver of $A(\mathcal{W}(p))$ in the category $A(\mathcal{W}(p))\Mod$. The algebra $A(\mathcal{W}(p))$ is a direct sum of algebras, so its Ext quiver $Q_{A(\mathcal{W}(p))}$ is the reunion of the Ext quivers of the summands:
\begin{itemize}
\item for $1 \leq i \leq p-1$, $I_{h_i}(\cc) \cong \cc[x]/(x^2)$ so it is a basic algebra with a unique simple module. Thus its Ext quiver has only one vertex and the number of loops is equal to the dimension of $\text{Rad}(I_{h_i}(\cc) )/\text{Rad}^2(I_{h_i}(\cc))=\mathbb{C}(x)$, which is $1$. The irreducible module corresponds to $X_i^+(0)$. 
\item  for $2p \leq i \leq 3p-1$, $M_{h_i}(\cc) \cong M_2(\cc)$ so it is a semisimple algebra with only one irreducible module. Hence its Ext quiver is an isolated vertex corresponding to $X_{i-2p}^-(0)$ if $i \neq 2p$ and $X_p^-(0)$ otherwise. 
\item  $\cc$ is a semisimple algebra and has only one irreducible module. Thus its Ext quiver is an isolated vertex corresponding to $X_p^+(0)$.
\end{itemize}
We get that the quiver $Q_{A(\mathcal{W}(p))}$ is as follows:
 \begin{center}
  \begin{tikzpicture}[scale=1,  transform shape]
  \tikzset{>=stealth}
\tikzstyle{point}=[circle,draw,fill]

\node (1) at ( 0,0) []{$\bullet$};
\node (2) at ( 0,-2) []{$\bullet$};
\node (3) at ( 2,0) [] {$\bullet$};
\node (4) at ( 2,-2) [] {$\bullet$};
\node (5) at ( 6,0) [] {$\bullet$};
\node (6) at ( 6,-2) [] {$\bullet$};
\node (7) at ( 9,0) [] {$\bullet$};
\node (8) at ( 9,-2) [] {$\bullet$};

\node at (0.8,0)  {$X_1^+(0)$};
\node at (0.8,-2) {$X_1^-(0)$};
\node at (2.8,0)  {$X_2^+(0)$};
\node at (2.8,-2) {$X_2^-(0)$};
\node at (6.9,0)  {$X_{p-1}^+(0)$};
\node at (6.9,-2) {$X_{p-1}^-(0)$};
\node at (9.8,0)  {$X_p^+(0)$};
\node at (9.8,-2) {$X_p^-(0)$};

\node at (4.5,-1) {$\dots\dots$};

\draw[->]  (1) edge[out=125, in=55, looseness=15]  (1);
\draw[->]  (3) edge[out=125, in=55, looseness=15]  (3);
\draw[->]  (5) edge[out=125, in=55, looseness=15]  (5);
\end{tikzpicture}
 \end{center}

We notice here that even though $Q_{\mathcal{W}(p)}$ and $Q_{A(\mathcal{W}(p))}$ have the same number of vertices, their block decomposition is different. The non-trivial blocks of $Q_{\mathcal{W}(p)}$ indexed by $1 \leq s \leq p-1$ split in two to give two non-connected vertices in $Q_{A(\mathcal{W}(p))}$. This is due to the fact that the module block corresponding to $(X_s^+, X_s^-)$ in $Q_{\mathcal{W}(p)}$ is sent by Zhu's construction to the module block $(X_s^+(0), X_s^-(0))$. However this module block corresponds to the summand $M_{h_{s+2p}}(\cc)  \oplus I_{h_s}(\cc)$ in $A(\mathcal{W}(p))$ and $M_{h_{s+2p}}(\cc)$ is semisimple. We also notice that self-extensions appear in $Q_{A(\mathcal{W}(p))}$ whereas there is none in $Q_{\mathcal{W}(p)}$. We can therefore say that the algebra $A(\mathcal{W}(p))$ does not reflect completely the category of $\mathcal{W}(p)$-modules. In order to get more information on this category, we now look into the algebra $\on{gr}A(\mathcal{W}(p))$.

\subsection{The graded algebra $\on{gr}A(\mathcal{W}(p))$}\label{SectiongrA(W(p))}

If $X$ is an element of $\mathcal{W}(p)$, let $\overline{X}$ denote the image of $X$ through the composition $\mathcal{W}(p) \longrightarrow A(\mathcal{W}(p)) \longrightarrow \on{gr}A(\mathcal{W}(p)) \cong R(\mathcal{W}(p)) $. In \cite{Adamovic-Milas2} the relations defining $A(\mathcal{W}(p))$ are given. It is then straightforward to compute that $\on{gr}A(\mathcal{W}(p))$ is the quotient of $\cc[\overline{E},\overline{F},\overline{H},\overline{\omega}]$ by the ideal generated by the following relations:
\begin{equation*}
\begin{cases}
\overline{\omega}^{3p-1}, \overline{E}^2, \overline{F}^2, \\[5pt]
\overline{E}\overline{H}, \overline{F}\overline{H}, \overline{\omega}^p\overline{E}, \overline{\omega}^p\overline{F}, \overline{\omega}^p\overline{H}, \\[5pt]
\overline{H}^2-C_p\overline{\omega}^{2p-1}, \overline{E}\overline{F}+C_p\overline{\omega}^{2p-1},
\end{cases}       
\end{equation*} 
with $C_p=\frac{(4p)^{2p-1}}{(2p-1)!^2}$. Furthermore, $\on{gr}A(\mathcal{W}(p))$ is a graded algebra with $\text{deg }\overline{\omega}=2$ and $\text{deg }\overline{E}= \text{deg }\overline{F}= \text{deg }\overline{H}= 2p-1$.

\subsubsection{Ext quiver for the category of $\on{gr}A(\mathcal{W}(p))$-modules}
We look at $\on{gr}A(\mathcal{W}(p))$ as an algebra without the gradation. As $\on{gr}A(\mathcal{W}(p))$ is a commutative algebra, all simple modules are one dimensional. We also notice from the relations that the generators are nilpotent, and thus there is only one simple module, i.e., the trivial module $\cc$. It follows that the Ext quiver $Q_{\on{gr}A(\mathcal{W}(p))}$ has only one vertex, and the number of loops on said vertex is equal to $\on{dim}_\cc \ \on{Ext}_{\on{gr}A(\mathcal{W}(p))}^1(\cc, \cc)$. If we compute the first steps of the minimal resolution $P^\bullet$ of $\cc$ as a $\on{gr}A(\mathcal{W}(p))$-module, for example using Tate's approach \cite{Tate}, we see that $\on{dim}_\cc \ \on{Ext}_{\on{gr}A(\mathcal{W}(p))}^1(\cc, \cc)=\on{dim}_\cc \ P^1=4$ because $\on{gr}A(\mathcal{W}(p))$ has four generators. It follows that the Ext quiver $Q_{\on{gr}A(\mathcal{W}(p))}$ of $\on{gr}A(\mathcal{W}(p))$ is given by:
\[
\begin{tikzpicture}[scale=1,  transform shape]
  \tikzset{>=stealth}
\tikzstyle{point}=[circle,draw,fill]

\node (1) at ( 0,0) []{$\bullet$};

\draw[->]  (1) edge[out=125, in=55, looseness=15]  (1);
\draw[->]  (1) edge[out=35, in=325, looseness=15]  (1);
\draw[->]  (1) edge[out=305, in=235, looseness=15]  (1);
\draw[->]  (1) edge[out=215, in=145, looseness=15]  (1);
\end{tikzpicture}
\]

As explained in Proposition~\ref{grA(W(p))=R(W(p))}, we have $R(\mathcal{W}(p)) \cong \on{gr}A(\mathcal{W}(p))$ and thus $Q_{R(\mathcal{W}(p))}=Q_{\on{gr}A(\mathcal{W}(p))}$.

\subsubsection{Ext quiver for the category of graded $\on{gr}A(\mathcal{W}(p))$-modules}
We now take into account the gradation of $\on{gr}A(\mathcal{W}(p))$ and look at the simple graded modules, i.e., of the form $M=\bigoplus_{i \in \mathbb{Z}}M_i$ such that $\on{gr}A(\mathcal{W}(p))_i  M_j \subseteq M_{i+j}$. This category has an auto-equivalence functor $ M\mapsto M[1] $ with $ M[1]_i=M_{i+1}$. This functor is called the degree shifting functor. 

\delete{The algebra $\on{gr}A(\mathcal{W}(p))$ is positively graded and finite dimensional over $\cc$. Thus it is a graded Artinian $\cc$-algebra. According to \cite[4.5]{Gordon-Green}, every simple graded module has length one, i.e., only one summand of $M$ is non-zero. Therefore there exists $k \in \mathbb{Z}$ such that $M=M_k$, and
\begin{align*}
\begin{cases}
\overline{X}.M_k \subseteq M_{k+2p-1}=\{0\}, \text{ for } X=E,F,H, \\[5pt]
\overline{\omega}.M_k \subseteq M_{k+2}=\{0\},  \\[5pt]
\overline{\mathbf{1}}. M_k \subseteq M_{k}.
\end{cases}
\end{align*}
So the action of $\overline{E},\overline{F},\overline{H},\overline{\omega}$ is zero, and $\overline{\mathbf{1}}$ acts as the identity. Because $M$ is simple, it follows that $M_k=\cc$. }

Since $ \on{gr}A(\mathcal W(p))$ is a graded local algebra with nilpotent maximal ideal, all simple graded $\on{gr}A(\mathcal{W}(p))$-modules form an infinite family indexed on $\mathbb{Z}$. The simple module corresponding to $k \in \mathbb{Z}$ is written as
\begin{align*}
M_{(k)}=\bigoplus_{i \in \mathbb{Z}}M_i \text{ with } M_k=\cc, \ M_i=0 \text{ for } i \neq k.
\end{align*}
Therefore the Ext quiver $Q^{gr}_{\on{gr}A(\mathcal{W}(p))}$ of $\on{gr}A(\mathcal{W}(p))$ for the category of graded modules has infinitely many vertices.
We now want to determine $\on{Ext}_{\on{gr}A(\mathcal{W}(p))}^1(M_{(k)},M'_{(k')})$ in the category of graded modules. Let 
\begin{align*}
0 \longrightarrow M'_{(k')} \stackrel{f}{\longrightarrow} N \stackrel{g}{\longrightarrow} M_{(k)} \longrightarrow 0
\end{align*}
be such an extension. As homomorphisms of graded modules preserve the gradation, we obtain a family of short exact sequences:
\begin{align*}
0 \longrightarrow M'_{i}  \stackrel{f_i}{\longrightarrow} N_i  \stackrel{g_i}{\longrightarrow} M_{i} \longrightarrow 0
\end{align*}
for all $i \in \mathbb{Z}$. By applying the degree shifting functor, we can assume $k=0$. 
We claim that $\on{Ext}_{\on{gr}A(\mathcal{W}(p))}^1(M_{(k)},M'_{(k')})\neq 0$  only when $ k'=\deg(X)$ for $ X\in \{ E, F, H, \omega\}$. 
In fact, if $ k'=0$ then $\on{Ext}_{\on{gr}A(\mathcal{W}(p))}^1(M_{(k)},M'_{(k')})=0$ since $X\in \{ E, F, H, \omega\}$ acts as zero.    If $k'<0$, any non-zero vector  $ v\in N_0$ generates a submodule that defines a splitting.  
If $ k'>0$, then $(\on{gr}A(\mathcal{W}(p)) N_0)_{k'}\neq 0$, and so $\on{gr}A(\mathcal{W}(p)) N_0$ is a quotient of $\on{gr}A(\mathcal{W}(p)) $ of length 2. 

\delete{
{\bf  the rest of the cases computations can be removed} 
{\bf \em Case I: $k\neq k'$.} We first assume that $k \neq k'$. By looking at the sequences above for $i \neq k, k'$, $i=k$, and $i=k'$, we see that $N=\bigoplus_{i \in \mathbb{Z}}N_i$ with $N_k=N_{k'}=\cc$, $N_i=\{0\}$ if $i \neq k, k'$.

 $\bullet$ If $2< |k-k'| <2p-1$ or $|k-k'| > 2p-1$, then:
 \begin{align*}
\begin{cases}
 \overline{X}.N_k \subseteq N_{k+2p-1}=\{0\}, \text{ for } X=E,F,H, \\[5pt]
 \overline{X}.N_{k'} \subseteq N_{k'+2p-1}=\{0\}, \text{ for } X=E,F,H, \\[5pt]
\overline{\omega}.N_k \subseteq N_{k+2}=\{0\}, \\[5pt]
\overline{\omega}.N_{k'} \subseteq N_{k'+2}=\{0\}.
\end{cases}
\end{align*}
We see that $\overline{E},\overline{F},\overline{H},\overline{\omega}$ act as zero and $\overline{\mathbf{1}}$ as the identity, so $N=M_{(k)} \bigoplus M'_{(k')}$ and the sequence splits.

$\bullet$ If $k-k' = 2p-1$ then:
 \begin{align*}
\begin{cases}
\overline{X}.N_k \subseteq N_{k+2p-1}=\{0\}, \text{ for } X=E,F,H, \\[5pt]
\overline{X}.N_{k'} \subseteq N_{k'+2p-1}=N_k, \text{ for } X=E,F,H, \\[5pt]
\overline{ \omega}.N_k \subseteq N_{k+2}=\{0\}, \\[5pt]
\overline{ \omega}.N_{k'} \subseteq N_{k'+2}=\{0\}. 
\end{cases}
\end{align*}
So the action of $\overline{\omega}$ is zero, but the action of $\overline{E},\overline{F},\overline{H}$ need not be trivial. Let $\{u_k\},\{u_{k'}\}$ be the basis of $N_k$ and $N_{k'}$. In the basis $(u_k, u_{k'})$ the matrix of $\overline{X}$ is $\begin{pmatrix}
0 & x \\
0 & 0 
\end{pmatrix}$ for some $x \in \cc$. We define the homomorphism $s:M_{(k)} \longrightarrow N$ such that $s(y)=y u_k$ for all $y \in M_k=\cc$. Then:
 \begin{align*}
\begin{array}{ll}
s(\overline{X}.y)=s(0)=0, & s(\overline{\omega}.y)=s(0)=0, \\[5pt]
\overline{X}.s(y)=y\overline{X}.u_k=0, & \overline{\omega}.s(y)=y\overline{\omega}.u_k=0.
\end{array}
\end{align*}
Therefore $s$ is a well-defined homogeneous homomorphism of graded modules. We have $g \circ s(y)=g(y u_k)=y g(u_k)=y=\on{id}(y)$ for all $y \in M_{(k)}$. Hence $s$ is a section and the sequence splits.

 $\bullet$ If $k'-k = 2p-1$ then:
 \begin{align*}
\begin{cases}
\overline{X}.N_k \subseteq N_{k+2p-1}=N_{k'}, \text{ for } X=E,F,H, \\[5pt]
\overline{X}.N_{k'} \subseteq N_{k'+2p-1}=\{0\}, \text{ for } X=E,F,H, \\[5pt]
\overline{ \omega}.N_k \subseteq N_{k+2}=\{0\}, \\[5pt]
\overline{ \omega}.N_{k'} \subseteq N_{k'+2}=\{0\}. 
\end{cases}
\end{align*}
So the action of $\overline{\omega}$ is zero, but the action of $\overline{E},\overline{F},\overline{H}$ might not be. With the same notation as above, the matrix of $\overline{X}$ in the basis $(u_k, u_{k'})$ is $\begin{pmatrix}
0 & 0 \\
x & 0 
\end{pmatrix}$ for some $x \in \cc$. Assuming there exists a section $s:M_{(k)} \longrightarrow N$, i.e $g \circ s=\on{id}_{| M_{(k)}}$, then it requires the existence of $t \in \cc$ such that $s(y)=t y u_k$ for all $y \in M_k=\cc$. Indeed, we have $s(M_k) \subseteq N_k=\cc u_k$. As $s$ is non-trivial, we have $t \neq 0$. Then we have:
\begin{align*}
0=s(0)=s(\overline{X}.y)=\overline{X}.s(y)=ty\overline{X}.u_k=t y x u_{k'}.
\end{align*}
It follows that $xy=0$ for all $y \in \cc$. If $x \neq 0$, this is impossible, and so there is no section. 

So if $\overline{E}$, $\overline{F}$, or $\overline{H}$ acts non-trivially, then the sequence does not split. As the three variables play symmetric roles, there are three non-split cases: 1 nilpotent, 2 nilpotents, 3 nilpotents. Hence there are three non-isomorphic extensions $0 \longrightarrow M_{(k+2p-1)} \longrightarrow N \longrightarrow M_{(k)} \longrightarrow 0$.

$\bullet$ If $k-k' = 2$ then:
 \begin{align*}
\begin{cases}
\overline{X}.N_k \subseteq N_{k+2p-1}=\{0\}, \text{ for } X=E,F,H, \\[5pt]
\overline{X}.N_{k'} \subseteq N_{k'+2p-1}=\{0\}, \text{ for } X=E,F,H, \\[5pt]
\overline{ \omega}.N_k \subseteq N_{k+2}=\{0\}, \\[5pt]
\overline{ \omega}.N_{k'} \subseteq N_{k'+2}=N_k. 
\end{cases}
\end{align*}
So the actions of $\overline{E}$, $\overline{F}$ and $\overline{H}$ are zero, but the action of $\overline{\omega}$ need not be trivial. In the basis $(u_k, u_{k'})$ the matrix of $\overline{\omega}$ is $\begin{pmatrix}
0 & z \\
0 & 0 
\end{pmatrix}$ for some $z \in \cc$. We define the homomorphism $s:M_{(k)} \longrightarrow N$ such that $s(y)=y u_k$ for all $y \in M_k=\cc$. Then:
 \begin{align*}
\begin{array}{ll}
s(\overline{X}.y)=s(0)=0, & s(\overline{\omega}.y)=s(0)=0, \\[5pt]
\overline{X}.s(y)=y\overline{X}.u_k=0, & \overline{\omega}.s(y)=y\overline{\omega}.u_k=0.
\end{array}
\end{align*}
Therefore $s$ is a well-defined homogeneous homomorphism of graded modules. We have $g \circ s(y)=g(y u_k)=y g(u_k)=y$ for all $y \in M_{(k)}$. Hence $s$ is a section and the sequence splits.

$\bullet$ If $k'-k = 2$ then:
 \begin{align*}
\begin{cases}
\overline{X}.N_k \subseteq N_{k+2p-1}=\{0\}, \text{ for } X=E,F,H, \\[5pt]
\overline{X}.N_{k'} \subseteq N_{k'+2p-1}=\{0\}, \text{ for } X=E,F,H, \\[5pt]
\overline{ \omega}.N_k \subseteq N_{k+2}=N_{k'}, \\[5pt]
\overline{ \omega}.N_{k'} \subseteq N_{k'+2}=\{0\}. 
\end{cases}
\end{align*}
So the action of $\overline{E}$, $\overline{F}$ and $\overline{H}$ is zero. In the basis $(u_k, u_{k'})$, the matrix of $\overline{\omega}$ is $\begin{pmatrix}
0 & 0 \\
z & 0 
\end{pmatrix}$ for some $z \in \cc$. By proceeding the same way as the case $k'-k = 2p-1$, we see that if $z \neq 0$, then the sequence does not split. There is therefore one extension $0 \longrightarrow M_{(k+2)} \longrightarrow N \longrightarrow M_{(k)} \longrightarrow 0$ up to isomorphism.

$\bullet$ If $|k'-k| = 1$ then all variables act as zero, so the sequence splits.

{\bf \em Case II: $k=k'$.}  If $k = k'$, then $N_k=\cc^2$. We see that all variables act as zero, and thus $N=M_{(k)}\bigoplus M_{(k)}$.
}

Therefore we have 
 \begin{align*}
\on{Ext}_{\on{gr}A(\mathcal{W}(p))}^1(M_{(k)},M'_{(k')})= 
\begin{cases}
\cc^3 \text{ if } k'=k+2p-1,\\[5pt]
\cc \text{ if } k'=k+2,\\[5pt]
\{0\} \text{ otherwise}. 
\end{cases}
\end{align*}

Therefore the Ext quiver $Q^{gr}_{\on{gr}A(\mathcal{W}(p))}$ has infinitely many vertices indexed by $\mathbb{Z}$, and each vertex $k \in \mathbb{Z}$ has 4 neighbours:

\tikzset{Rightarrow/.style={double distance=3pt,>={Implies},->},
triple/.style={-, preaction={draw,Rightarrow}}}
 \begin{center}
  \begin{tikzpicture}[scale=1,  transform shape]
  \tikzset{>=stealth}
\tikzstyle{point}=[circle,draw,fill]

\node (1) at (-2,0) [label={[above]:$k-(2p-1)$}]{$\bullet$};
\node (2) at (0,0) [label={[shift={(0.25,0.03)}]:$k$}]{$\bullet$};
\node (3) at (2,0) [label={[above]:$k+(2p-1)$}]{$\bullet$};
\node (4) at ( 0,2) [label={[right]:$k-2$}]{$\bullet$};
\node (5) at ( 0,-2) [label={[right]:$k+2$}]{$\bullet$};

\draw [decoration={markings,mark=at position 1 with
    {\arrow[scale=1.2,>=stealth]{>}}},postaction={decorate}]  (-1.8,0) -- (-0.2,0);
\draw [decoration={markings,mark=at position 1 with
    {\arrow[scale=1.2,>=stealth]{>}}},postaction={decorate}] (-1.8,0.15) -- (-0.2,0.15);
\draw [decoration={markings,mark=at position 1 with
    {\arrow[scale=1.2,>=stealth]{>}}},postaction={decorate}] (-1.8,-0.15) -- (-0.2,-0.15);
\draw [decoration={markings,mark=at position 1 with
    {\arrow[scale=1.2,>=stealth]{>}}},postaction={decorate}]  (0.2,0) -- (1.8,0);
\draw [decoration={markings,mark=at position 1 with
    {\arrow[scale=1.2,>=stealth]{>}}},postaction={decorate}] (0.2,0.15) -- (1.8,0.15);
\draw [decoration={markings,mark=at position 1 with
    {\arrow[scale=1.2,>=stealth]{>}}},postaction={decorate}] (0.2,-0.15) -- (1.8,-0.15);
    
\draw [decoration={markings,mark=at position 1 with
    {\arrow[scale=1.2,>=stealth]{>}}},postaction={decorate}] (0,1.8) -- (0,0.2);
\draw [decoration={markings,mark=at position 1 with
    {\arrow[scale=1.2,>=stealth]{>}}},postaction={decorate}] (0,-0.2) -- (0,-1.8);

\end{tikzpicture}
 \end{center}
The horizontal arrows correspond to $\overline{E}$, $\overline{F}$, $\overline{H}$, and the vertical arrows correspond to $\overline{\omega}$. For example, when $p=2$, the quiver $Q^{gr}_{\on{gr}A(\mathcal{W}(2))}$ is given below:
\vspace{-2cm}
\tikzset{Rightarrow/.style={double distance=3pt,>={Implies},->},
triple/.style={-, preaction={draw,Rightarrow}}}
 \begin{center}
  \begin{tikzpicture}[scale=1,  transform shape]
  \tikzset{>=stealth}
\tikzstyle{point}=[circle,draw,fill]

\node (1) at (-6,0) []{$\dots$};
\node (2) at (-4,0) [label={[above]:-8}]{$\bullet$};
\node (3) at (-2,0) [label={[above]:-5}]{$\bullet$};
\node (4) at ( 0,0) [label={[shift={(0.5,0.03)}]:-2}]{$\bullet$};
\node (5) at ( 2,0) [label={[shift={(0.5,0.03)}]:1}]{$\bullet$};
\node (6) at ( 4,0) [label={[shift={(0.5,0.03)}]:4}]{$\bullet$};
\node (7) at ( 6,0) []{$\dots$};

\node (8) at (-6,-2) []{$\dots$};
\node (9) at (-4,-2) [label={[above right]:-6}]{$\bullet$};
\node (10) at (-2,-2) [label={[above right]:-3}]{$\bullet$};
\node (11) at (0,-2) [label={[above right]:0}]{$\bullet$};
\node (12) at (2,-2) [label={[above right]:3}]{$\bullet$};
\node (13) at (4,-2) [label={[above right]:6}]{$\bullet$};
\node (14) at (6,-2) []{$\dots$};

\node (15) at (-6,-4) []{$\dots$};
\node (16) at (-4,-4) [label={[above right]:-4}]{$\bullet$};
\node (17) at (-2,-4) [label={[above right]:-1}]{$\bullet$};
\node (18) at (0,-4) [label={[above right]:2}]{$\bullet$};
\node (19) at (2,-4) [label={[above right]:5}]{$\bullet$};
\node (20) at (4,-4) [label={[above right]:8}]{$\bullet$};
\node (21) at (6,-4) []{$\dots$};

\draw[triple]  (1) -- (2);
\draw[triple]  (2) -- (3);
\draw[triple]  (3) -- (4);
\draw[triple]  (4) -- (5);
\draw[triple]  (5) -- (6);
\draw[triple]  (6) -- (7);

\draw[triple]  (8) -- (9);
\draw[triple]  (9) -- (10);
\draw[triple]  (10) -- (11);
\draw[triple]  (11) -- (12);
\draw[triple]  (12) -- (13);
\draw[triple]  (13) -- (14);

\draw[triple]  (15) -- (16);
\draw[triple]  (16) -- (17);
\draw[triple]  (17) -- (18);
\draw[triple]  (18) -- (19);
\draw[triple]  (19) -- (20);
\draw[triple]  (20) -- (21);

\draw[->] (2) -- (9);
\draw[->] (9) -- (16);

\draw[->] (3) -- (10);
\draw[->] (10) -- (17);

\draw[->] (4) -- (11);
\draw[->] (11) -- (18);

\draw[->] (5) -- (12);
\draw[->] (12) -- (19);

\draw[->] (6) -- (13);
\draw[->] (13) -- (20);

\draw[->, dashed]  (16) edge[out=-145, in=65, looseness=1]  (4);
\draw[->, dashed]  (17) edge[out=-145, in=65, looseness=1]  (5);
\draw[->, dashed]  (18) edge[out=-145, in=65, looseness=1]  (6);
\draw[->, dashed]  (19) edge[out=-145, in=65, looseness=1]  (7);
\end{tikzpicture}
 \end{center}
\vspace{-1.5cm}
The dashed arrows correspond to $\overline{\omega}$ and go behind the quiver. As the quiver does not fit on the plane without self-intersection, we can draw it on a cylinder:
\begin{center}

\includegraphics[scale=0.25]{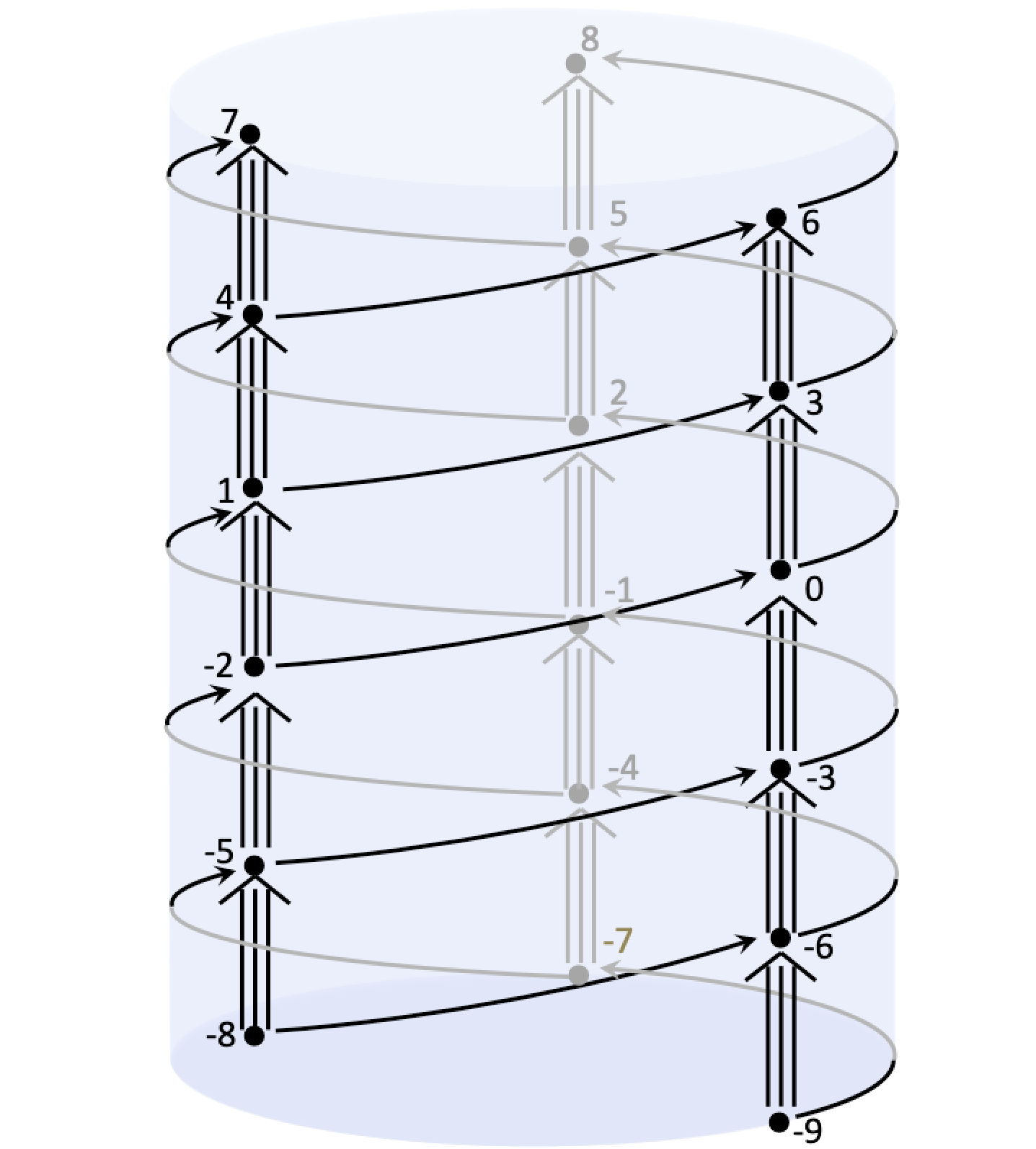}
\end{center}

We notice that if we were to collapse $Q^{gr}_{\on{gr}A(\mathcal{W}(p))}$ on one vertex, then the arrows corresponding to each variable would merge and we would obtain $Q_{\on{gr}A(\mathcal{W}(p))}$. Therefore there is a folding-unfolding relationship between the quivers of the categories of $\on{gr}A(\mathcal{W}(p))$-modules and graded $\on{gr}A(\mathcal{W}(p))$-modules.

\begin{remark}
The graded algebra $\on{gr}A(\mathcal{W}(p))$ can be equipped with a derivation $\overline{D}$ coming from $L(-1)$. The derivation is zero on the algebra but might be non-trivial on a module. It is a linear $\on{gr}A(\mathcal{W}(p))$-equivariant homomorphism. As $L(-1)$ acts on $\mathcal{W}(p)$-modules with degree 1, it is natural to specify that $\overline{D}$ acts on graded $\on{gr}A(\mathcal{W}(p))$-modules with degree 1. The same reasoning as above leads to a new non-splitting case when $k'=k+1$. Thus, in the quiver of $(\on{gr}A(\mathcal{W}(p)),\overline{D})$ in the category of graded modules, every vertex has 6 neighbours:
\tikzset{Rightarrow/.style={double distance=3pt,>={Implies},->},
triple/.style={-, preaction={draw,Rightarrow}}}
 \begin{center}
  \begin{tikzpicture}[scale=1,  transform shape]
  \tikzset{>=stealth}
\tikzstyle{point}=[circle,draw,fill]

\node (1) at (-2,0) [label={[above]:$k-(2p-1)$}]{$\bullet$};
\node (2) at (0,0) [label={[shift={(0.2,0.03)}]:$k$}]{$\bullet$};
\node (3) at (2,0) [label={[above]:$k+(2p-1)$}]{$\bullet$};
\node (4) at ( 0,2) [label={[right]:$k-2$}]{$\bullet$};
\node (5) at ( 0,-2) [label={[below right]:$k+2$}]{$\bullet$};
\node (6) at ( 2,2) [label={[right]:$k+1$}]{$\bullet$};
\node (7) at ( -2,-2) [label={[below right]:$k-1$}]{$\bullet$};

\draw [decoration={markings,mark=at position 1 with
    {\arrow[scale=1.2,>=stealth]{>}}},postaction={decorate}]  (-1.8,0) -- (-0.2,0);
\draw [decoration={markings,mark=at position 1 with
    {\arrow[scale=1.2,>=stealth]{>}}},postaction={decorate}] (-1.8,0.15) -- (-0.2,0.15);
\draw [decoration={markings,mark=at position 1 with
    {\arrow[scale=1.2,>=stealth]{>}}},postaction={decorate}] (-1.8,-0.15) -- (-0.2,-0.15);
\draw [decoration={markings,mark=at position 1 with
    {\arrow[scale=1.2,>=stealth]{>}}},postaction={decorate}]  (0.2,0) -- (1.8,0);
\draw [decoration={markings,mark=at position 1 with
    {\arrow[scale=1.2,>=stealth]{>}}},postaction={decorate}] (0.2,0.15) -- (1.8,0.15);
\draw [decoration={markings,mark=at position 1 with
    {\arrow[scale=1.2,>=stealth]{>}}},postaction={decorate}] (0.2,-0.15) -- (1.8,-0.15);
    
\draw [decoration={markings,mark=at position 1 with
    {\arrow[scale=1.2,>=stealth]{>}}},postaction={decorate}] (0,1.8) -- (0,0.2);
\draw [decoration={markings,mark=at position 1 with
    {\arrow[scale=1.2,>=stealth]{>}}},postaction={decorate}] (0,-0.2) -- (0,-1.8);
    
\draw [decoration={markings,mark=at position 1 with
    {\arrow[scale=1.2,>=stealth]{>}}},postaction={decorate}] (-1.8,-1.8) -- (-0.2,-0.2);
    
\draw [decoration={markings,mark=at position 1 with
    {\arrow[scale=1.2,>=stealth]{>}}},postaction={decorate}] (0.2,0.2) -- (1.8,1.8);
\end{tikzpicture}
 \end{center}
\end{remark}

\begin{remark}
The quivers $Q_{\mathcal{W}(p)}$, $Q_{A(\mathcal{W}(p))}$ and $Q_{\on{gr}A(\mathcal{W}(p))}=Q_{R(\mathcal{W}(p))}$ share similarities. We see that from $Q_{\mathcal{W}(p)}$ to $Q_{A(\mathcal{W}(p))}$, the number of vertices remains the same while the number of arrows decreases. \delete{, which illustrates Theorem~\ref{TheoremZhuAlgebra}}If we go from $Q_{\mathcal{W}(p)}$ to $Q_{R(\mathcal{W}(p))}$, then the number of vertices collapses to one, but the arrows are somewhat preserved. Indeed, if in $Q_{\mathcal{W}(p)}$ we identify the vertices $X_i^+$ and $X_i^-$, $1 \leq i \leq p$, then we obtain $p-1$ copies of $Q_{R(\mathcal{W}(p))}$ and an isolated vertex. We can illustrate the properties of the quivers which are transferred from one algebra to another by the following diagram:
\[
\begin{tikzpicture}[scale=1,  transform shape]
\tikzset{>=stealth}
\tikzstyle{point}=[circle,draw,fill]

\node (1) at (0,0) []{$Q_{\mathcal{W}(p)}$};
\node (2) at (4,2) []{$Q_{A(\mathcal{W}(p))}$};
\node (3) at (4,-2) []{$Q_{R(\mathcal{W}(p))}$};

\draw [decoration={markings,mark=at position 1 with
    {\arrow[scale=1.2,>=stealth]{>}}},postaction={decorate}] (1) -- node [above, midway, sloped] {preserve vertices} node [below, midway, sloped] {arrows drop} (2);
\draw [decoration={markings,mark=at position 1 with
    {\arrow[scale=1.2,>=stealth]{>}}},postaction={decorate}] (1) -- node [below, midway, sloped] {preserve arrows} node [above, midway, sloped] {vertices drop} (3);
\end{tikzpicture}.
\]
Here $Q_{A(\mathcal{W}(p))}$ is the Ext quiver for the category of $A(V)$-modules. We note that $A(V)\Mod^{fil}$ is not an abelian category, and thus its Ext quiver would require a different interpretation.
\end{remark}

\section{Algebras related to $\mathcal{W}(p)$}\label{Section6}
\subsection{The Ext algebra of $\mathcal{W}(p)$}\label{Section6.1}

It is proved in \cite{Nagatomo-Tsuchiya} that the category of logarithmic $\mathcal{W}(p)$-modules is equivalent as an abelian category to the category of finite dimensional modules for the restricted quantum group $\overline{U}_q(\on{sl}_2)$ at $q=e^{\frac{\pi i}{p}}$. Therefore their Yoneda algebras are isomorphic.  The Yoneda algebra and the indecomposable modules of $\overline{U}_q(\on{sl}_2)$ were determined in \cite{GSTF}.

\begin{remark}
The abelian category of finite dimensional  $\overline{U}_q(\on{sl}_2)$-modules was also investigated earlier in \cite{Suter}. The study of some infinite dimensional modules over $\overline{U}_q(\on{sl}_2)$ was carried out in \cite{Xiao} earlier too. As conjectured in \cite{GSTF}, the two categories should be equivalent as ribbon braided monoidal categories (up to a cocyle twist). The Grothendieck ring of $\overline{U}_q(\on{sl}_2)$ has been determined in \cite{FGST}. This conjecture has been proved by a sequence of papers by various authors (see a recent preprint \cite[Section 1]{FL} for a list of references).
\end{remark}

For any $1 \leq s \leq p-1$, the vector space $\on{Ext}_{\mathcal{W}(p)}^{1}(X_s^-, X_s^+)$ (resp. $\on{Ext}_{\mathcal{W}(p)}^{1}(X_s^+, X_s^-)$) is of dimension $2$ with basis $(\alpha^s_1,\alpha^s_2)$ (resp. $(\beta^s_1,\beta^s_2)$) (see Proposition~\ref{PropExtW(p)}). For any $1 \leq s \leq p$, we also write $e_s^{\pm}$ for the identity in $\on{Ext}_{\mathcal{W}(p)}^0(X_s^\pm, X_s^\pm)$.

\begin{theorem}[\cite{GSTF}]\label{W(p)_Yoneda}
Set $X_{\mathcal{W}(p)}=\bigoplus_{L \in \on{Irr}(\mathcal{W}(p))}L$. The Ext algebra
\[
\on{Ext}_{\mathcal{W}(p)}^*(X_{\mathcal{W}(p)}, X_{\mathcal{W}(p)})=\bigoplus_{s=1}^p\on{Ext}_{\mathcal{W}(p)}^*(X_s^+ \oplus X_s^-, X_s^+ \oplus X_s^-)
\]
 of $\mathcal{W}(p)$ is generated in degree $0$ and $1$ by the $e^\pm_s$ and $\alpha^s_i, \beta^s_j$. The non trivial relations are
\[
\alpha^s_1\beta^s_2+\alpha^s_2\beta^s_1=\beta^s_1\alpha^s_2+\beta^s_2  \alpha^s_1=0.
\]
for all $1 \leq s \leq p-1$.
\end{theorem}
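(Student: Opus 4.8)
The plan is to reduce to a single non-semisimple block and then to identify its Yoneda algebra through Koszul duality. By Proposition~\ref{PropExtW(p)} the $\on{Ext}^1$-quiver of $\mathcal{W}(p)$ has connected components $\{X_s^+, X_s^-\}$ for $1\le s\le p-1$ together with the two isolated vertices $X_p^+$ and $X_p^-$; these components are the blocks of the category, so $\on{Ext}^*_{\mathcal{W}(p)}(X_{\mathcal{W}(p)}, X_{\mathcal{W}(p)})$ decomposes as the asserted direct sum and it suffices to treat each summand separately. Since $X_p^{\pm}=P_p^{\pm}$ is projective, $\on{Ext}^{\geq 1}(X_p^{\pm}, X_p^{\pm})=0$ and the block $s=p$ contributes only $\cc e_p^+\oplus \cc e_p^-$ in degree $0$. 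I therefore fix $1\le s\le p-1$ and abbreviate $S^{\pm}=X_s^{\pm}$, $P^{\pm}=P_s^{\pm}$.

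Next I would build the minimal projective resolution of $S^+$ (and symmetrically $S^-$) from the rigid diamond structure of the $P^{\pm}$ recalled above: one has $P^0=P^+$, then $\Omega^1(S^+)=\on{Rad}(P^+)$ has semisimple top $S^-\oplus S^-$, forcing $P^1=P^-\oplus P^-$, and so on. The crucial structural claim, proved by induction on $n$, is that $\Omega^n(S^+)$ has radical length $2$, with semisimple top equal to $(S^+)^{\oplus(n+1)}$ when $n$ is even and $(S^-)^{\oplus(n+1)}$ when $n$ is odd, and radical equal to $n$ copies of the opposite simple. The inductive step is a composition-factor count of the kernel of the projective cover $(P^{+})^{\oplus(n+1)}\to\Omega^n(S^+)$ (resp. $(P^-)^{\oplus(n+1)}$), using that each $P^{\pm}$ has the factors $S^{\pm},\,S^{\mp},\,S^{\mp},\,S^{\pm}$. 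This shows the resolution is linear for the radical grading, hence the block Yoneda algebra is generated in degrees $0$ and $1$, and (via Lemma~\ref{LemmaExt}) it gives $\dim_{\cc}\on{Ext}^n(S^+\oplus S^-, S^+\oplus S^-)=2(n+1)$, with the even part supported on the $(+,+)$ and $(-,-)$ entries and the odd part on the $(+,-)$ and $(-,+)$ entries. In degree $1$ this recovers the spanning classes $\alpha^s_i,\beta^s_j$.

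To extract the relations I would pass to the basic algebra $A_s=\on{End}_{\mathcal{W}(p)}(P^+\oplus P^-)^{op}$, which by Lemma~\ref{kQandEnd} is $kQ_s/I_s$ for the two-vertex quiver $Q_s$ with two arrows each way. Lemma~\ref{LemmaHom} gives $\dim_{\cc}A_s=8$, and the diamond structure shows $\on{Rad}^2(A_s)$ is spanned by a single length-two path at each vertex; thus $I_s$ is generated in degree $2$, its degree-two part being three-dimensional in each of the $(+,+)$ and $(-,-)$ components. The linear resolution of the previous step means $A_s$ is Koszul, so by Proposition~\ref{prop:Koszul} the Yoneda algebra is the quadratic dual $A_s^!=kQ_s^{op}/\langle I_s^{\perp}\rangle$. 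Computing the one-dimensional orthogonal complement $I_s^{\perp}$ in each component then yields precisely the two relations $\alpha^s_1\beta^s_2+\alpha^s_2\beta^s_1=0$ and $\beta^s_1\alpha^s_2+\beta^s_2\alpha^s_1=0$, and the Hilbert series of $A_s^!$ matches the dimensions $2(n+1)$ found above, confirming that there are no further relations.

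The main obstacle is the determination of $I_s$, equivalently of the exact symmetric combination appearing in the relation. Only the explicit homomorphisms realizing the two middle copies of $S^{\mp}$ inside $P^{\pm}$ — and the way a composite $+\to-\to+$ collapses into the socle — distinguish $\alpha_1\beta_2+\alpha_2\beta_1$ from other one-dimensional subspaces and fix the sign. This is exactly the point where one must invoke the detailed module structure of the $P_s^{\pm}$ from \cite{Nagatomo-Tsuchiya, McRae-Yang}, or transport the answer through the abelian equivalence with $\overline{U}_q(\on{sl}_2)$-modules and the computation of \cite{GSTF}. A secondary technical point is the radical-length-two claim in the induction, since it is what guarantees linearity of the resolution and hence the Koszulity on which the quadratic-duality argument rests.
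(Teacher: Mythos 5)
Your proposal is essentially correct, but it takes a genuinely different route from the paper. In the published text, Theorem~\ref{W(p)_Yoneda} is not proved at all: it is quoted from \cite{GSTF} and transported to $\mathcal{W}(p)$ through the abelian equivalence of $\mathcal{W}(p)\Mod^{log}$ with $\overline{U}_q(\on{sl}_2)\Mod$ established in \cite{Nagatomo-Tsuchiya} (this is stated at the start of Section~\ref{Section6.1}). The paper then goes the opposite logical way to yours: it computes $\mathcal{E}(\mathcal{W}(p))$ directly from the socle diagrams (Proposition~\ref{prop:E(W(p))}), and only afterwards deduces Koszulity and the identification $\on{Ext}^*\cong\mathcal{E}(\mathcal{W}(p))^!$ in Theorem~\ref{Koszul_dual_W(p)} \emph{using} the already-quoted Theorem~\ref{W(p)_Yoneda}. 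You instead prove Koszulity first, from the linearity of the minimal resolution, and then recover the Yoneda algebra as the quadratic dual via Proposition~\ref{prop:Koszul}. Provided Koszulity is established independently (your linear-resolution argument does this), there is no circularity, and your argument has the merit of being self-contained inside the VOA module category: everything is derived from the rigid diamond structure of $P_s^{\pm}$ and Lemma~\ref{LemmaHom}. Your dimension counts ($\dim\on{Ext}^n = n+1$ per entry, alternating parity) agree with \cite{GSTF} and with the computation the authors carried out in an earlier version.

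Two points deserve more care than your sketch gives them. First, the inductive claim that $\Omega^n(S^+)$ has Loewy length $2$ with top $(S^{\mp})^{\oplus(n+1)}$ is not established by a composition-factor count alone; the count is consistent with a copy of the ``wrong'' simple splitting off the top. To close this, observe that $\Omega^{n+1}$ contains $\on{Soc}\bigl((P^{\epsilon})^{\oplus(n+1)}\bigr)$ (the socle must die in $\Omega^n$, whose socle is isotypic of the other type), that the quotient is a semisimple submodule of $\on{Rad}P/\on{Rad}^2P$, and that a simple summand in the top not accounted for would split off as a direct summand of $\Omega^{n+1}$ --- impossible since the category is self-injective ($P_s^{\pm}$ are injective), so syzygies of the indecomposable non-projective module $S^+$ are indecomposable of length $>1$. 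Second, as you note yourself, identifying the one-dimensional space $I_s^{\perp}$ with the symmetric combination $\alpha_1\beta_2+\alpha_2\beta_1$ (rather than some other line) genuinely requires the explicit composites $P^+\twoheadrightarrow V^-_{s,i}\hookrightarrow P^-$ from \cite{Nagatomo-Tsuchiya, McRae-Yang}; this is exactly the content of the relations~\eqref{eq:pres_E(W(p))} in the paper, so your argument ultimately leans on the same input.
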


\begin{corollary}
For any $1 \leq s \leq p-1$, we have $\text{proj.dim }X_s^{\pm}=+\infty$, and so the global dimension of $\mathcal{W}(p)$ is infinite.
\end{corollary}

\delete{
In this section, we compute the Ext algebra $\on{Ext}_{\mathcal{W}(p)}^*(X_{\mathcal{W}(p)}, X_{\mathcal{W}(p)})$ of $\mathcal{W}(p)$ where $X= \bigoplus_{L \in \on{Irr}(\mathcal{W}(p))}L$ is the sum of all irreducible $\mathcal{W}(p)$-modules. We see $\on{Ext}_{\mathcal{W}(p)}^n$ as the $n$-th derived functor of $\on{Hom}$ in the category of logarithmic modules $\mathcal{W}(p)\Mod^{log}$. We will then establish connections between $\on{Ext}_{\mathcal{W}(p)}^*(X_{\mathcal{W}(p)}, X_{\mathcal{W}(p)})$, the path algebra $kQ_{\mathcal{W}(p)}$ and $\on{End}_{\mathcal{W}(p)}(\bigoplus_{L \in \on{Irr}(\mathcal{W}(p))}P_L)$ where $P_L$ is the projective cover of the $\mathcal{W}(p)$-module $L$.

Set $1 \leq s \leq p-1$ and define $\partial_1: P_s^-\oplus P_s^- \longrightarrow P_s^+$ as follows:
\begin{center}
 \begin{tikzpicture}[scale=1, transform shape]
\tikzset{>=stealth}
\node (1) at (0,1.5) []{$X_s^-$};
\node (2) at (-1,0) []{$X_s^+$};
\node (3) at (1,0) []{$X_s^+$};
\node (4) at (0,-1.5) []{$X_s^-$};
\draw[->]  (1) -- (2);
\draw[->]  (1) -- (3);
\draw[->]  (2) -- (4);
\draw[->]  (3) -- (4) ;

\node (5) at (3,1.5) []{$X_s^-$};
\node (6) at (2,0) []{$X_s^+$};
\node (7) at (4,0) []{$X_s^+$};
\node (8) at (3,-1.5) []{$X_s^-$};
\draw[->]  (5) -- (6);
\draw[->]  (5) -- (7);
\draw[->]  (6) -- (8);
\draw[->]  (7) -- (8) ;

\draw[-]  (0.5,-0.25) -- (2.5,-0.25) ;
\draw[-]  (-0.5,1.75) -- (3.5,1.75) ;
\draw[-]  (0.5,-0.25) -- (-0.5,1.75) ;
\draw[-]  (2.5,-0.25) -- (3.5,1.75) ;

\node (9) at (7,1.5) []{$X_s^+$};
\node (10) at (6,0) []{$X_s^-$};
\node (11) at (8,0) []{$X_s^-$};
\node (12) at (7,-1.5) []{$X_s^+$};
\draw[->]  (9) -- (10);
\draw[->]  (9) -- (11);
\draw[->]  (10) -- (12);
\draw[->]  (11) -- (12) ;

\draw[-]  (5.5,0.25) -- (8.5,0.25) ;
\draw[-]  (6.5,-1.75) -- (7.5,-1.75) ;
\draw[-]  (5.5,0.25) -- (6.5,-1.75) ;
\draw[-]  (8.5,0.25) -- (7.5,-1.75) ;

\draw[->]  (4,0.5) -- (5.25,0.25) node[midway, above] {$\partial_1$} ;
\end{tikzpicture}
\end{center}
where the circled part in $P_s^-\oplus P_s^-$ is sent to the one in $P_s^+$, sending composition factor to composition factor, and the rest of $P_s^-\oplus P_s^-$ is sent to $0$. This construction gives a well-defined homomorphism. Furthermore, $\partial_1(\text{Rad}(P_s^-\oplus P_s^-)) \subseteq \text{Rad}^2(P_s^+) \subset \text{Rad}(P_s^+)$. By repeating this process we can construct an acyclic complex:
\begin{align*}
P_{s,+}^\bullet: \quad \dots \longrightarrow P_s^-\oplus P_s^- \oplus P_s^-\oplus P_s^- \xrightarrow{\partial_3} P_s^+\oplus P_s^+ \oplus P_s^+ \xrightarrow{\partial_2} P_s^-\oplus P_s^- \xrightarrow{\partial_1} P_s^+ \xrightarrow{\partial_0} X_s^+
\end{align*} 
where $\partial_0$ is the projection of the projective cover of $X_s^+$. Furthermore, for each $n \in \mathbb{N}^*$, we have $\partial_n(P_{s,+}^n) \subseteq \text{Rad}(P_{s,+}^{n-1})$. As $P_s^\pm$ is projective, we deduce that $P_{s,+}^\bullet$ is a minimal projective resolution of $X_s^+$. Likewise, we construct $P_{s,-}^\bullet$ and see that it is a minimal projective resolution of $X_s^-$.

Let $i \in \mathbb{N}^*$. Because the resolutions are minimal and the $X_s^\pm$ are simple, it follows that:
\[ \on{Ext}_{\mathcal{W}(p)}^{2i}(X_{s_1}^{\epsilon_1}, X_{s_2}^{\epsilon_2})  = \on{Hom}_{\mathcal{W}(p)}(P_{s_1,\epsilon_1}^{2i}, X_{s_2}^{\epsilon_2}) = \bigoplus_{k=1}^{2i+1} \on{Hom}_{\mathcal{W}(p)}(P_{s_1}^{\epsilon_1}, X_{s_2}^{\epsilon_2}). \]
But as $P_{s_1}^{\epsilon_1}$ is the projective cover of $X_{s_1}^{\epsilon_1}$, Lemma~\ref{LemmaHom} implies that if $s_1 \neq s_2$, we have $\on{Ext}_{\mathcal{W}(p)}^{2i}(X_{s_1}^{\epsilon_1}, X_{s_2}^{\epsilon_2}) =\{0\}$. Similarly, for $i \in \mathbb{N}$, $\on{Ext}_{\mathcal{W}(p)}^{2i+1}(X_{s_1}^{\epsilon_1}, X_{s_2}^{\epsilon_2}) =\{0\}$ if $s_1 \neq s_2$. Hence the Ext algebra will be composed of blocks labeled $1,2,\dots, p$ corresponding to the labels of the simple modules (we will see shortly that there are in fact $p+1$ blocks because the last one splits in two). We therefore obtain:
\begin{align*}
\begin{array}{r}
\on{Ext}_{\mathcal{W}(p)}^*(X_{\mathcal{W}(p)}, X_{\mathcal{W}(p)})= \displaystyle  \bigoplus_{s=1}^p\bigoplus_{i=0}^\infty \left( \on{Ext}_{\mathcal{W}(p)}^i(X_s^+, X_s^+) \oplus\on{Ext}_{\mathcal{W}(p)}^i(X_s^-, X_s^-)  \right. \\[5pt]
  \left.   \oplus\on{Ext}_{\mathcal{W}(p)}^i(X_s^+, X_s^-) \oplus\on{Ext}_{\mathcal{W}(p)}^i(X_s^-, X_s^+)\right).
 \end{array}
\end{align*} 

Set $1 \leq s \leq p-1$. From the minimality of the resolution $P_{s,+}^\bullet$, we have:
\begin{equation*}
\on{Ext}_{\mathcal{W}(p)}^n(X_s^+, X_s^\pm)=\text{Hom}_{\mathcal{W}(p)}(P_{s,+}^n, X_s^\pm)=\begin{cases}
    \displaystyle  \bigoplus_{i=1}^{n+1}\text{Hom}_{\mathcal{W}(p)}(P_{s}^-, X_s^\pm) \text{ if $n$ is odd},\\[10pt]
    \displaystyle  \bigoplus_{i=1}^{n+1}\text{Hom}_{\mathcal{W}(p)}(P_{s}^+, X_s^\pm) \text{ if $n$ is even}.
    \end{cases}       
\end{equation*}
We then apply Lemma~\ref{LemmaHom} to obtain the following equalities:
\begin{equation*}
\begin{cases}
     \on{Ext}_{\mathcal{W}(p)}^{2i}(X_s^+, X_s^+) \cong \cc^{2i+1},\\[5pt]
     \on{Ext}_{\mathcal{W}(p)}^{2i}(X_s^+, X_s^-) \cong \{0\}, \\[5pt]
     \on{Ext}_{\mathcal{W}(p)}^{2i+1}(X_s^+, X_s^+) \cong \{0\}, \\[5pt]
     \on{Ext}_{\mathcal{W}(p)}^{2i+1}(X_s^+, X_s^-) \cong \cc^{2i+2}.
    \end{cases}       
\end{equation*}
Similarly we compute the values of $ \on{Ext}_{\mathcal{W}(p)}^{n}(X_s^-, X_s^\pm)$. The formulas can be obtained from those above by permuting $X_s^+$ and $X_s^-$. 

We also know that $X_p^\pm$ is projective so $\on{Ext}_{\mathcal{W}(p)}^n(X_p^{\epsilon_1}, X_p^{\epsilon_2})=\{0\}$ for any $n\geq 1$, $\epsilon_1$, $\epsilon_2=\pm$. Furthermore, $\on{Ext}_{\mathcal{W}(p)}^0(X_p^{\epsilon_1}, X_p^{\epsilon_2})=\text{Hom}_{\mathcal{W}(p)}(X_p^{\epsilon_1}, X_p^{\epsilon_2})= \cc$ if $\epsilon_1 = \epsilon_2$ and $\{0\}$ otherwise.
    
We have thus proved the following result:
\begin{theorem}\label{dimExtn}
In the direct sum of the Ext algebra given above, for $1 \leq s \leq p-1$ and $i \geq 0$, we have:
\begin{equation*}
\begin{cases}
     \on{Ext}_{\mathcal{W}(p)}^{2i}(X_s^+, X_s^+) \cong \cc^{2i+1},\\[5pt]
     \on{Ext}_{\mathcal{W}(p)}^{2i+1}(X_s^+, X_s^-) \cong \cc^{2i+2},
    \end{cases}  
    \begin{cases}
     \on{Ext}_{\mathcal{W}(p)}^{2i}(X_s^-, X_s^-) \cong \cc^{2i+1},\\[5pt]
     \on{Ext}_{\mathcal{W}(p)}^{2i+1}(X_s^-, X_s^+) \cong \cc^{2i+2},
    \end{cases} 
     \begin{cases}
     \on{Ext}_{\mathcal{W}(p)}^{0}(X_p^+, X_p^+) \cong \cc,\\[5pt]
     \on{Ext}_{\mathcal{W}(p)}^{0}(X_p^-, X_p^-) \cong \cc.
    \end{cases}       
\end{equation*}
All the other summands are zero. We can thus rewrite:
\begin{align*}
\begin{array}{cl}
\on{Ext}_{\mathcal{W}(p)}^*(X_{\mathcal{W}(p)}, X_{\mathcal{W}(p)})= & \displaystyle \bigoplus_{s=1}^{p-1}\left[ \bigoplus_{k \text{ even}} \left( \on{Ext}_{\mathcal{W}(p)}^{k}(X_s^+, X_s^+) \oplus \on{Ext}_{\mathcal{W}(p)}^{k}(X_s^-, X_s^-)\right) \right.  \\
 & \quad \quad \displaystyle \oplus \left. \bigoplus_{k \text{ odd}} \left( \on{Ext}_{\mathcal{W}(p)}^{k}(X_s^+, X_s^-) \oplus  \on{Ext}_{\mathcal{W}(p)}^{k}(X_s^-, X_s^+) \right) \right]  \oplus \cc \oplus \cc.
\end{array}
\end{align*}
\end{theorem}

We see that $\on{Ext}_{\mathcal{W}(p)}^*(X_{\mathcal{W}(p)}, X_{\mathcal{W}(p)})$ decomposes into $p+1$ blocks, one for each $1 \leq s \leq p-1$ and the $p$-th block splits in two with one for $X_p^+$ and one for $X_p^-$.

\begin{corollary}
For any $1 \leq s \leq p-1$, we have $\text{proj.dim }X_s^{\pm}=+\infty$, and so the global dimension of $\mathcal{W}(p)$ is infinite.
\end{corollary}

We now want to show that the Ext algebra is generated by its terms of degree 0 and 1. The last two blocks only contain degree 0 elements, so there is nothing to show. Fix $1 \leq s \leq p-1$ so we are in a non-trivial block of the Ext algebra. We know that $\on{dim} \on{Ext}_{\mathcal{W}(p)}^{1}(X_s^-, X_s^+)= \on{dim} \on{Ext}_{\mathcal{W}(p)}^{1}(X_s^+, X_s^-)=2$, so we fix a basis $(\alpha_1,\alpha_2)$ of $\on{Ext}_{\mathcal{W}(p)}^{1}(X_s^-, X_s^+)$ and a basis $(\beta_1,\beta_2)$ of $\on{Ext}_{\mathcal{W}(p)}^{1}(X_s^+, X_s^-)$. We have a homomorphism $\pi: \on{Ext}_{\mathcal{W}(p)}^{1}(X_s^-, X_s^+) \otimes \on{Ext}_{\mathcal{W}(p)}^{1}(X_s^+, X_s^-) \longrightarrow \on{Ext}_{\mathcal{W}(p)}^{2}(X_s^+, X_s^+)$ given by the Yoneda product. 

The proof of the next proposition is given in Section~\ref{Proofs_of_Section_6.1} in the appendix.

\begin{proposition}\label{RelationsInExt}
The homomorphism 
\[\pi: \on{Ext}_{\mathcal{W}(p)}^{1}(X_s^-, X_s^+) \otimes \on{Ext}_{\mathcal{W}(p)}^{1}(X_s^+, X_s^-) \longrightarrow \on{Ext}_{\mathcal{W}(p)}^{2}(X_s^+, X_s^+)\]
is surjective and its kernel is 1-dimensional with basis $\alpha_1\otimes\beta_2+\alpha_2 \otimes \beta_1$. 

By symmetry, the homomorphism $\on{Ext}_{\mathcal{W}(p)}^{1}(X_s^+, X_s^-) \otimes \on{Ext}_{\mathcal{W}(p)}^{1}(X_s^-, X_s^+) \longrightarrow \on{Ext}_{\mathcal{W}(p)}^{2}(X_s^-, X_s^-)$ is surjective and its kernel is 1-dimensional with basis $\beta_1 \otimes \alpha_2+\beta_2 \otimes \alpha_1$.
\end{proposition}

We can now show the main result of this section.

\begin{theorem}\label{GeneratorsOfExt}
The Ext algebra $\on{Ext}_{\mathcal{W}(p)}^*(X_{\mathcal{W}(p)}, X_{\mathcal{W}(p)})$ of $\mathcal{W}(p)$ is generated in degree $0$ and $1$.
\end{theorem}

The proof is based on an explicit realisation of a Yoneda product in $\on{Ext}_{\mathcal{W}(p)}^*(X_{\mathcal{W}(p)}, X_{\mathcal{W}(p)})$ and can be found in Section~\ref{Proofs_of_Section_6.1} in the appendix.
}

Because of Theorem~\ref{W(p)_Yoneda}, we know that $\on{Ext}_{\mathcal{W}(p)}^0(X_{\mathcal{W}(p)}, X_{\mathcal{W}(p)})\oplus\on{Ext}_{\mathcal{W}(p)}^1(X_{\mathcal{W}(p)}, X_{\mathcal{W}(p)})$ is a finite dimensional generating subspace of the Yoneda algebra. Using the dimensions of the summands of the Yoneda algebra found in \cite{GSTF}, one can determine that $\on{dim}_\cc (\bigoplus_{i \leq n} \on{Ext}_{\mathcal{W}(p)}^{i}(X_{\mathcal{W}(p)}, X_{\mathcal{W}(p)}))=(n+1)n(p-1)+2n(p-1)+2p$. This leads to the following corollary. The readers are referred to \cite{Krause-Lenagan} for definitions and properties of Gelfand-Kirillov dimensions of associative algebras over a field. In case an associative algebra is finitely generated and commutative, the Gelfand-Kirillov dimension agrees with the Krull dimension, which is the dimension of the corresponding affine algebraic variety. 

\begin{corollary}
The Gelfand-Kirillov dimension of the algebra $\on{Ext}_{\mathcal{W}(p)}^{*}(X_{\mathcal{W}(p)}, X_{\mathcal{W}(p)})$ is $2$.
\end{corollary}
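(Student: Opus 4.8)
The plan is to read off the Gelfand--Kirillov dimension directly from the polynomial growth of the Hilbert function, which is essentially handed to us by the preceding dimension count. Write $A=\on{Ext}_{\mathcal{W}(p)}^{*}(X_{\mathcal{W}(p)}, X_{\mathcal{W}(p)})$, graded by cohomological degree, with graded pieces $A_i=\on{Ext}_{\mathcal{W}(p)}^{i}(X_{\mathcal{W}(p)}, X_{\mathcal{W}(p)})$. First I would record that $A$ is finitely generated: by Theorem~\ref{W(p)_Yoneda} it is generated over $A_0$ by $A_1$, and both pieces are finite dimensional (indeed $\dim_\cc A_0=2p$ by Schur's lemma, since there are $2p$ simple modules, and $\dim_\cc A_1=4(p-1)$ by Proposition~\ref{PropExtW(p)}). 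Thus $V:=A_0\oplus A_1$ is a finite dimensional generating subspace containing the identity $\sum_{s}(e_s^++e_s^-)$. Recall from \cite{Krause-Lenagan} that for such an algebra the Gelfand--Kirillov dimension is computed, independently of the choice of finite dimensional generating subspace, as
\[
\on{GKdim}(A)=\limsup_{n\to\infty}\frac{\log_\cc \dim_\cc V^{n}}{\log n},
\]
where $V^{n}$ is the span of all products of at most $n$ elements of $V$.

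The key intermediate step is to identify $V^{n}$ with the grading filtration. Because $A$ is a graded algebra generated in degree $1$ over degree $0$, any homogeneous element of degree $i\le n$ is a product of $i$ elements of $A_1$ and some elements of $A_0$; padding shorter products with the identity (which lies in $A_0\subseteq V$) shows every such element lies in $V^n$, and conversely no product of $n$ elements of $V$ can have degree exceeding $n$. Hence
\[
V^{n}=\bigoplus_{i=0}^{n}A_i,
\qquad
\dim_\cc V^{n}=\dim_\cc\Big(\bigoplus_{i\le n}\on{Ext}_{\mathcal{W}(p)}^{i}(X_{\mathcal{W}(p)}, X_{\mathcal{W}(p)})\Big)=(n+1)n(p-1)+2n(p-1)+2p,
\]
the polynomial obtained above from the summand dimensions of \cite{GSTF}.

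To conclude, I would note that $(n+1)n(p-1)+2n(p-1)+2p=(p-1)n^{2}+3(p-1)n+2p$ is a polynomial in $n$ of degree exactly $2$, since $p\ge 2$ forces the leading coefficient $p-1$ to be nonzero; consequently $c_1 n^{2}\le \dim_\cc V^{n}\le c_2 n^{2}$ for suitable $0<c_1\le c_2$ and all large $n$, so that $\on{GKdim}(A)=2$. The only genuine point requiring care is the identification $V^{n}=\bigoplus_{i\le n}A_i$, which rests entirely on the degree $0$/degree $1$ generation from Theorem~\ref{W(p)_Yoneda}; without generation in low degrees the filtration by powers of $V$ need not coincide with the grading filtration, and the polynomial formula would not directly control the growth. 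Once that identification is in place, the degree-$2$ polynomial growth makes the value of the limit immediate.
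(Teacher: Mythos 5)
Your argument is correct and is essentially the paper's own: both take the degree $0$ and $1$ part as a finite-dimensional generating subspace (Theorem~\ref{W(p)_Yoneda}), identify the powers of that subspace with the grading filtration, and read off $\on{GKdim}=2$ from the quadratic polynomial $(n+1)n(p-1)+2n(p-1)+2p$ with leading coefficient $p-1\neq 0$; you merely make explicit the steps the paper leaves implicit. (One typo: the base of the logarithm in the GK-dimension formula should just be $\log$, not $\log_\cc$.)
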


We end this section by presenting the algebra $\on{Ext}_{\mathcal{W}(p)}^*(X_{\mathcal{W}(p)}, X_{\mathcal{W}(p)})$ as a quotient of the path algebra $kQ_{\mathcal{W}(p)}$, and we give the relations between the generators. We add to the notations of Section~\ref{QuiverWp} that for any $1 \leq s \leq p-1$, the two arrows from $X_s^-$ to $X_s^+$ in $Q_{\mathcal{W}(p)}$ are called $\alpha_{s,1}$ and $\alpha_{s,2}$. Furthermore, the two arrows from $X_s^+$ to $X_s^-$ are called $\beta_{s,1}$ and $\beta_{s,2}$. Let $\mathcal{I}_{\mathcal{W}(p)}$ be the ideal of $kQ_{\mathcal{W}(p)}$ generated by the relations $\alpha_{s,1}\beta_{s,2}+\alpha_{s,2} \beta_{s,1}$, $\beta_{s,1} \alpha_{s,2}+\beta_{s,2} \alpha_{s,1}$ for $1 \leq s \leq p-1$. Based on Theorem~\ref{W(p)_Yoneda}, we have a natural isomorphism of graded algebras 
\begin{align}\label{eq:Yoneda}
kQ_{\mathcal{W}(p)}/\mathcal{I}_{\mathcal{W}(p)} \stackrel{\cong}{\longrightarrow} \on{Ext}_{\mathcal{W}(p)}^*(X_{\mathcal{W}(p)}, X_{\mathcal{W}(p)})
\end{align}
that sends $(\alpha_{s,1},\alpha_{s,2})$ (resp. $(\beta_{s,1},\beta_{s,2})$) to $(\alpha^s_1, \alpha^s_2)$ (resp. $(\beta^s_1, \beta^s_2)$), and the product in $kQ_{\mathcal{W}(p)}$ is sent to the Yoneda product in $\on{Ext}_{\mathcal{W}(p)}^*(X_{\mathcal{W}(p)}, X_{\mathcal{W}(p)})$. Naturally, the paths of length $0$ are sent to the identity on the respective simple modules. 

\begin{corollary}
In the case of $\mathcal{W}(p)$, the homomorphism $\varphi$ of Lemma~\ref{kQandExt} is surjective.
\end{corollary}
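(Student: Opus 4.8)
The plan is to deduce surjectivity directly from the generation statement of Theorem~\ref{W(p)_Yoneda} together with the explicit description of $\varphi$. Recall from Lemma~\ref{kQandExt} that $\varphi$ is the graded algebra homomorphism determined on generators by $\varphi(e_i)=\on{id}_{L_i}$ for each vertex $i\in(Q_{\mathcal{W}(p)})_0$ and by sending each arrow to the corresponding basis element of $\on{Ext}^1$; concretely, by Definition~\ref{DefinitionExtQuiver} the arrows of $Q_{\mathcal{W}(p)}$ are indexed by fixed bases of the spaces $\on{Ext}^1_{\mathcal{W}(p)}(X_{s_1}^{\epsilon_1},X_{s_2}^{\epsilon_2})$, which by Proposition~\ref{PropExtW(p)} are precisely the $\alpha^s_i$ and $\beta^s_j$.

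First I would identify the images of the two lowest graded pieces. The image $\varphi((kQ_{\mathcal{W}(p)})_0)$ is the span of the $\on{id}_{L_i}$, which by Schur's lemma is all of $\on{Ext}^0_{\mathcal{W}(p)}(X_{\mathcal{W}(p)},X_{\mathcal{W}(p)})$. The image $\varphi((kQ_{\mathcal{W}(p)})_1)$ is the span of the chosen bases of the first Ext groups, hence equals $\on{Ext}^1_{\mathcal{W}(p)}(X_{\mathcal{W}(p)},X_{\mathcal{W}(p)})$ in full. Thus $\on{im}\varphi$, being a graded subalgebra of the Yoneda algebra, contains both the degree $0$ and the degree $1$ components.

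Then I would invoke Theorem~\ref{W(p)_Yoneda}, which asserts that $\on{Ext}^*_{\mathcal{W}(p)}(X_{\mathcal{W}(p)},X_{\mathcal{W}(p)})$ is generated as an algebra in degrees $0$ and $1$ by exactly the elements $e_s^\pm$ and $\alpha^s_i,\beta^s_j$. Since $\on{im}\varphi$ is a subalgebra containing all of these generators, it must coincide with the whole Yoneda algebra, proving surjectivity. Equivalently, one may observe that the isomorphism \eqref{eq:Yoneda} is precisely the map induced by $\varphi$ on the quotient $kQ_{\mathcal{W}(p)}/\mathcal{I}_{\mathcal{W}(p)}$, so that $\varphi$ factors as the composite of the canonical surjection $kQ_{\mathcal{W}(p)}\twoheadrightarrow kQ_{\mathcal{W}(p)}/\mathcal{I}_{\mathcal{W}(p)}$ with an isomorphism; surjectivity is then immediate.

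There is essentially no obstacle here, as the content of the statement is carried entirely by Theorem~\ref{W(p)_Yoneda}. The only point requiring care is the bookkeeping that $\varphi$ really does hit a full basis of $\on{Ext}^1$ in degree $1$ --- this is guaranteed by the bijection between the arrows of $Q_{\mathcal{W}(p)}$ and a basis of the first Ext groups set up in Definition~\ref{DefinitionExtQuiver}, together with the definition of $\varphi$ on arrows. Once this is in place the argument is purely formal. It is worth noting, for contrast, that $\varphi$ is \emph{not} injective, since its kernel is the nonzero admissible ideal $\mathcal{I}_{\mathcal{W}(p)}$ generated by the relations of Theorem~\ref{W(p)_Yoneda}.
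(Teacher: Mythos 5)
Your argument is correct and matches the paper's (implicit) reasoning exactly: the corollary is stated immediately after the isomorphism \eqref{eq:Yoneda}, through which $\varphi$ factors as the canonical projection $kQ_{\mathcal{W}(p)}\twoheadrightarrow kQ_{\mathcal{W}(p)}/\mathcal{I}_{\mathcal{W}(p)}$ followed by an isomorphism, so surjectivity is immediate from the degree $0$ and $1$ generation in Theorem~\ref{W(p)_Yoneda}. One tiny quibble with your closing aside: $\mathcal{I}_{\mathcal{W}(p)}$ is quadratic but not \emph{admissible} in the technical sense (the quotient is infinite dimensional, so no power of the arrow ideal lies in it); this does not affect the proof.
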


\delete{
It turns out that this homomorphism is an isomorphism (see Section~\ref{Proofs_of_Section_6.1} in the appendix for the proof):
\begin{proposition}\label{kQandExtforW(p)}
The homomorphism $kQ_{\mathcal{W}(p)}/\mathcal{I}_{\mathcal{W}(p)} \longrightarrow \on{Ext}_{\mathcal{W}(p)}^*(X_{\mathcal{W}(p)}, X_{\mathcal{W}(p)})$ is an algebra isomorphism. Therefore the relations of $\mathcal{I}_{\mathcal{W}(p)}$ completely define $\on{Ext}_{\mathcal{W}(p)}^*(X_{\mathcal{W}(p)}, X_{\mathcal{W}(p)})$. In particular, $\on{Ext}_{\mathcal{W}(p)}^*(X_{\mathcal{W}(p)}, X_{\mathcal{W}(p)})$ is generated in degree $0$ and $1$ with relations in degree $2$.
\end{proposition}
}

\subsection{The endomorphism algebra of the projective covers}
We know from Lemma~\ref{kQandEnd} that for a finite dimensional algebra $A$ over an algebraically closed field, we have $kQ_A/I \cong  \on{End}_A(\bigoplus_{L \in \on{Irr}(A)} P_L)^{op}$. We would like to apply this reasoning to $\mathcal{W}(p)$. Indeed, we have the Ext quiver $Q_{\mathcal{W}(p)}$ and the projective covers $P_s^\pm$ for $1 \leq s \leq p$. In this section, we will find an admissible ideal $I_{\mathcal{W}(p)}$ of $kQ_{\mathcal{W}(p)}$ such that $kQ_{\mathcal{W}(p)}/I_{\mathcal{W}(p)} \cong \on{End}_{\mathcal{W}(p)}(\bigoplus_{L \in \on{Irr}(\mathcal{W}(p))}P_L)^{op}$. 

Set $\mathcal{E}(\mathcal{W}(p))=\on{End}_{\mathcal{W}(p)}(\bigoplus_{L \in \on{Irr}(\mathcal{W}(p))}P_L)$. Based on the socle sequences given in Section~\ref{QuiverWp} and Lemma~\ref{LemmaHom}, we see that if $1 \leq s \leq p-1$, then $\on{dim}_\mathbb{C} \ \text{Hom}_{\mathcal{W}(p)}(P_s^{\epsilon},P_s^{\epsilon'})=2$ for $\epsilon, \epsilon'=\pm$. Furthermore, $\on{dim}_\mathbb{C} \ \text{Hom}_{\mathcal{W}(p)}(P_p^\epsilon,P_p^\epsilon)=1$ for $\epsilon=\pm$, and all the other summands in $\mathcal{E}(\mathcal{W}(p))$ are zero. Hence: 
\begin{align*}
\begin{array}{l}
\mathcal{E}(\mathcal{W}(p)) \cong  \displaystyle  \bigoplus_{s=1}^{p-1} \left( \on{Hom}_{\mathcal{W}(p)}(P_s^+,P_s^+) \oplus \on{Hom}_{\mathcal{W}(p)}(P_s^-,P_s^-) \right. \\
\quad \quad \quad \left. \oplus \on{Hom}_{\mathcal{W}(p)}(P_s^+,P_s^-) \oplus \on{Hom}_{\mathcal{W}(p)}(P_s^-,P_s^+) \right)  \oplus \cc \oplus \cc. 
 \end{array}
\end{align*}
In particular, $\on{dim}_\mathbb{C} \ \mathcal{E}(\mathcal{W}(p)) =8p-6$.

For $1 \leq s \leq p-1$, we label the arrows of the diagrams of $P_s^\pm$ as follows (see \cite{GSTF}):
\[
 P_s^+: \quad \xymatrix{ & X^{+}_s \ar[rd]^{V_{s, 2}^-} \ar[ld]_{V_{s, 1}^-} & \\
 X^-_s\ar[rd]_{V_{s, 2}^+}&& X^{-}_s\ar[ld]^{V_{s, 1}^+}\\
 &X^{+}_s &} \hspace{1cm} P_s^-: \quad \xymatrix{ & X^{-}_s \ar[rd]^{V_{s, 2}^+}\ar[ld]_{V_{s, 1}^+}& \\
 X^+_s\ar[rd]_{V_{s, 2}^-}&& X^{+}_s\ar[ld]^{V_{s, 1}^-}\\
 &X^{-}_s &.}\]
An arrows $X^\mp_s \stackrel{V_{s, i}^\pm}{\longrightarrow} X^\pm_s$ means that there exists a short exact sequence $0 \longrightarrow X^\pm_s \longrightarrow V_{s, i}^\pm \longrightarrow X^\mp_s \longrightarrow 0$.

Define $a_{s, 1} \in \text{Hom}_{\mathcal{W}(p)}(P_s^+,P_s^-)$ as the composition $P_s^+ \relbar\joinrel\twoheadrightarrow V_{s, 1}^- \myhookrightarrow P_s^-$. Likewise, $a_{s, 2}$ is the composition $P_s^+ \relbar\joinrel\twoheadrightarrow V_{s, 2}^- \myhookrightarrow P_s^-$. As $\text{Hom}_{\mathcal{W}(p)}(P_s^+,P_s^-)$ is of dimension $2$, we see that $(a_{s, 1}, a_{s, 2})$ is a basis of $\text{Hom}_{\mathcal{W}(p)}(P_s^+,P_s^-)$.

Similarly, we define $b_{s, 1} \in \text{Hom}_{\mathcal{W}(p)}(P_s^-,P_s^+)$ as the composition $P_s^- \relbar\joinrel\twoheadrightarrow V_{s, 1}^+ \myhookrightarrow P_s^+$ and $b_{s, 2}:P_s^- \relbar\joinrel\twoheadrightarrow V_{s, 2}^+  \myhookrightarrow P_s^+$. We see that $(b_{s, 1}, b_{s, 2})$ is a basis of $\text{Hom}_{\mathcal{W}(p)}(P_s^-,P_s^+)$.

By looking at the composition homomorphisms between the basis elements, we see that:
\begin{equation*}
\begin{cases}
b_{s, 1} \circ a_{s, 1}=b_{s, 2} \circ a_{s, 2}=b_{s, 2} \circ a_{s, 1}-b_{s, 1} \circ a_{s, 2}=0, \\[5pt]
a_{s, 1} \circ b_{s, 1}=a_{s, 2} \circ b_{s, 2}=a_{s, 1} \circ b_{s, 2}-a_{s, 2} \circ b_{s, 1} =0,  \\[5pt]
a_{s, i} \circ b_{s, j} \circ a_{s, k}=0 \ \forall \ i, j, k, \\[5pt]
b_{s, i} \circ a_{s, j} \circ b_{s, k} =0 \ \forall \ i, j, k.
\end{cases}
\end{equation*}

Let $Q_{\mathcal{W}(p)}^{op}$ be the opposite quiver of $Q_{\mathcal{W}(p)}$. Define the algebra homomorphism $\pi : kQ_{\mathcal{W}(p)}^{op} \longrightarrow \mathcal{E}(\mathcal{W}(p))$ such that
\begin{equation*}
\begin{cases}
\pi(\overline{e_s^\pm})=\on{id}_{P_s^\pm} \text{ for } 1 \leq s \leq p, \\[5pt]
\pi(\overline{\alpha_{s, i}})=a_{s,i} \text{ for } 1 \leq s \leq p-1, \ i=1,2,   \\[5pt]
\pi(\overline{\beta_{s, i}})=b_{s,i} \text{ for } 1 \leq s \leq p-1, \ i=1,2.
\end{cases}       
\end{equation*}
We have $\pi(\overline{\alpha_{s, i}\beta_{s, j}})=\pi(\overline{\beta_{s, j}}\overline{\alpha_{s, i}})=\pi(\overline{\beta_{s, j}})\circ \pi(\overline{\alpha_{s, i}})=b_{s, j} \circ a_{s, i}$. We see that $b_{s, 2} \circ a_{s, 1} \in \text{Hom}_{\mathcal{W}(p)}(P_s^+,P_s^+)$ is neither $0$ nor a multiple of the identity, so $(\on{id}_{P_s^+},b_{s, 2} \circ a_{s, 1})$ is a basis of $\text{Hom}_{\mathcal{W}(p)}(P_s^+,P_s^+)$. Likewise, we have $(\on{id}_{P_s^-},a_{s, 2}\circ b_{s, 1})$ is a basis of $\text{Hom}_{\mathcal{W}(p)}(P_s^-,P_s^-)$. It follows that $\pi$ is surjective and its kernel contains the relations above.

Let $I_{\mathcal{W}(p)}^{op}$ be the ideal of $kQ_{\mathcal{W}(p)}^{op}$ generated by:
\begin{equation}\label{eq:pres_E(W(p))}
\begin{cases}
\overline{\alpha_{s, 1}\beta_{s, 2}}-\overline{\alpha_{s, 2}\beta_{s, 1}} \text{ for } 1 \leq s \leq p-1, \\[5pt]
\overline{\alpha_{s, i}\beta_{s, i}} \text{ for } 1 \leq s \leq p-1, \ i=1, 2, \\[5pt]
\overline{\beta_{s, 1}\alpha_{s, 2}}-\overline{\beta_{s, 2}\alpha_{s, 1}} \text{ for } 1 \leq s \leq p-1, \\[5pt]
\overline{\beta_{s, i}\alpha_{s, i}} \text{ for } 1 \leq s \leq p-1, \ i=1, 2. 
\end{cases} 
\end{equation}
We can check that $I_{\mathcal{W}(p)}^{op}$ contains $\overline{\alpha_{s, i}\beta_{s, j}\alpha_{s, k}}$ and $\overline{\beta_{s, i}\alpha_{s, j}\beta_{s, k}}$ for all $i, j, k$. We have $I_{\mathcal{W}(p)}^{op} \subseteq \text{Ker }\pi$ so $\pi$ factors through the surjection $kQ_{\mathcal{W}(p)}^{op}/I_{\mathcal{W}(p)}^{op} \longrightarrow \mathcal{E}(\mathcal{W}(p))$. By counting the number of independent elements in $kQ_{\mathcal{W}(p)}^{op}/I_{\mathcal{W}(p)}^{op} $ degree by degree, we can determine that $\on{dim}_\mathbb{C} \ kQ_{\mathcal{W}(p)}^{op}/I_{\mathcal{W}(p)}^{op} =2p+4(p-1)+2(p-1)=8p-6=\on{dim}_\mathbb{C} \ \mathcal{E}(\mathcal{W}(p))$. It follows that $\pi$ is injective and so it is an isomorphism of algebras.

\delete{

We will write $(a_{s,1},a_{s,2})$ for a basis of $\text{Hom}_{\mathcal{W}(p)}(P_s^+,P_s^-)$ and $(b_{s,1},b_{s,2})$ for a basis of $\text{Hom}_{\mathcal{W}(p)}(P_s^-,P_s^+)$.

Let $\overline{Q_{\mathcal{W}(p)}}$ be the opposite quiver $Q_{\mathcal{W}(p)}$. Define the algebra homomorphism $\pi : kQ_{\mathcal{W}(p)}^{op} \longrightarrow \mathcal{E}$ such that
\begin{equation*}
\begin{cases}
\pi(\overline{e_s^\pm})=\on{id}_{P_s^\pm} \text{ for } 1 \leq s \leq p, \\[5pt]
\pi(\overline{\alpha_{s, i}})=a_{s,i} \text{ for } i=1,2, \ 1 \leq s \leq p-1,  \\[5pt]
\pi(\overline{\beta_{s, i}})=b_{s,i} \text{ for } i=1,2, \ 1 \leq s \leq p-1.
\end{cases}       
\end{equation*}
We have $\pi(\overline{\alpha_{s, i}\beta_{s, j}})=\pi(\overline{\beta_{s, j}}\overline{\alpha_{s, i}})=\pi(\overline{\beta_{s, j}})\circ \pi(\overline{\alpha_{s, i}})=b_{s, j} \circ a_{s, i}$. We can illustrate $b_{s, j} \circ a_{s, i}$ as follows:
\begin{center}
 \begin{tikzpicture}[scale=1, transform shape]
\tikzset{>=stealth}
\node (1) at (0,1.5) []{$X_s^+$};
\node (2) at (-1,0) []{$X_s^-$};
\node (3) at (1,0) []{$X_s^-$};
\node (4) at (0,-1.5) []{$X_s^+$};
\draw[->]  (1) -- (2);
\draw[->]  (1) -- (3);
\draw[->]  (2) -- (4);
\draw[->]  (3) -- (4) ;

\node (5) at (4,1.5) []{$X_s^-$};
\node (6) at (3,0) []{$X_s^+$};
\node (7) at (5,0) []{$X_s^+$};
\node (8) at (4,-1.5) []{$X_s^-$};
\draw[->]  (5) -- (6);
\draw[->]  (5) -- (7);
\draw[->]  (6) -- (8);
\draw[->]  (7) -- (8) ;

\node (9) at (8,1.5) []{$X_s^+$};
\node (10) at (7,0) []{$X_s^-$};
\node (11) at (9,0) []{$X_s^-$};
\node (12) at (8,-1.5) []{$X_s^+$};
\draw[->]  (9) -- (10);
\draw[->]  (9) -- (11);
\draw[->]  (10) -- (12);
\draw[->]  (11) -- (12) ;

\draw[->]  (1) -- (6);
\draw[->]  (1) -- (7);
\draw[->]  (2) -- (8);
\draw[->]  (3) -- (8);

\draw[->]  (5) -- (10);
\draw[->]  (5) -- (11);
\draw[->]  (6) -- (12);
\draw[->]  (7) -- (12);

\node (13) at (2,-2) []{$a_{s, i}$};
\node (14) at (6,-2) []{$b_{s, j}$};

\end{tikzpicture}
\end{center}
We can construct  $a_{s, i}$ and $b_{s, j}$ coordinate by coordinate such that, for all $1 \leq s \leq p-1$:
\begin{equation*}
\begin{cases}
b_{s, 1} \circ a_{s, 1}=b_{s, 2} \circ a_{s, 2}=b_{s, 1} \circ a_{s, 2}=\on{id}_{X_s^+}=-b_{s, 2} \circ a_{s, 1}, \\[5pt]
a_{s, 1}\circ b_{s, 1} =a_{s, 2}\circ b_{s, 2} =a_{s, 1}\circ b_{s, 2} =\on{id}_{X_s^-}=-a_{s, 2}\circ b_{s, 1}.
\end{cases}
\end{equation*}
With this, we have $b_{s, j} \circ a_{s, i} \in \text{Hom}_{\mathcal{W}(p)}(P_s^+,P_s^+)$ and $(\on{id}_{P_s^+},b_{s, j} \circ a_{s, i})$ is a free family in a space of dimension 2. Hence $(\on{id}_{P_s^+},b_{s, j} \circ a_{s, i})$ is a basis of $\text{Hom}_{\mathcal{W}(p)}(P_s^+,P_s^+)$. Likewise we have $(\on{id}_{P_s^-},a_{s, i}\circ b_{s, j})$ is a basis of $\text{Hom}_{\mathcal{W}(p)}(P_s^-,P_s^-)$. It follows that $\pi$ is surjective and its kernel contains the relations above.

From the previous diagram, we see that $ a_{s, k} \circ b_{s, j} \circ a_{s, i}=0$ and $b_{s, k} \circ a_{s, j}\circ b_{s, i}=0$ for any $i, j, k$. This implies that $\overline{\alpha_{s, i}\beta_{s, j}\alpha_{s, k}}$ and $\overline{\beta_{s, i}\alpha_{s, j}\beta_{s, k}}$ are also in the kernel of $\pi$ for any $i, j, k$. 

Let $I_{\mathcal{W}(p)}^{op}$ be the ideal of $kQ_{\mathcal{W}(p)}^{op}$ generated by:
\begin{equation*}
\begin{cases}
\overline{\alpha_{s, 1}\beta_{s, 2}}+\overline{\alpha_{s, 2}\beta_{s, 1}} \text{ for all } 1 \leq s \leq p-1, \\[5pt]
\overline{\alpha_{s, 1}\beta_{s, 1}}-\overline{\alpha_{s, 2}\beta_{s, 2}} \text{ for all } 1 \leq s \leq p-1, \\[5pt]
\overline{\alpha_{s, 2}\beta_{s, 2}}-\overline{\alpha_{s, 2}\beta_{s, 1}} \text{ for all } 1 \leq s \leq p-1, \\[5pt]
\overline{\beta_{s, 1}\alpha_{s, 2}}+\overline{\beta_{s, 2}\alpha_{s, 1}} \text{ for all } 1 \leq s \leq p-1, \\[5pt]
\overline{\beta_{s, 1}\alpha_{s, 1}}-\overline{\beta_{s, 2}\alpha_{s, 2}} \text{ for all } 1 \leq s \leq p-1, \\[5pt]
\overline{\beta_{s, 2}\alpha_{s, 2}}-\overline{\beta_{s, 2}\alpha_{s, 1}} \text{ for all } 1 \leq s \leq p-1, \\[5pt]
\overline{\alpha_{s, i}\beta_{s, j}\alpha_{s, k}}\text{ for some } i, j, k \in \{1,2\} \text{ (one triple is enough)}, \\[5pt]
\overline{\beta_{s, i}\alpha_{s, j}\beta_{s, k}}\text{ for some } i, j, k, \in \{1,2\} \text{ (one triple is enough)}. 
\end{cases} 
\end{equation*}
We have $I_{\mathcal{W}(p)}^{op} \subseteq \text{Ker }\pi$ so $\pi$ factors through the surjection $kQ_{\mathcal{W}(p)}^{op}/I_{\mathcal{W}(p)}^{op} \longrightarrow \mathcal{E}$. By counting the number of independent elements in $kQ_{\mathcal{W}(p)}^{op}/I_{\mathcal{W}(p)}^{op} $ degree by degree, we can determine that $\on{dim}_\mathbb{C} \ kQ_{\mathcal{W}(p)}^{op}/I_{\mathcal{W}(p)}^{op} =2p+4(p-1)+2(p-1)=8p-6=\on{dim}_\mathbb{C} \ \mathcal{E}$. It follows that $\pi$ is injective and so it is an isomorphism of algebras.
}

We have proved the following result.

\begin{proposition}\label{prop:E(W(p))}
There is an isomorphism of algebras
\begin{align*}
\mathcal{E}(\mathcal{W}(p)) \cong kQ_{\mathcal{W}(p)}^{op}/I_{\mathcal{W}(p)}^{op} ,
\end{align*}
where $I_{\mathcal{W}(p)}^{op}$ is given by the relations~\eqref{eq:pres_E(W(p))}. In particular, $\mathcal{E}(\mathcal{W}(p))$ is a quadratic algebra and $\mathcal{E}(\mathcal{W}(p))^!=kQ_{\mathcal{W}(p)}/(I_{\mathcal{W}(p)}^{op})^\perp$.
\end{proposition}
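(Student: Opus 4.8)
The plan is to assemble the isomorphism from the construction carried out just above the statement, and then to read off the two ``in particular'' assertions from the definitions. First I would record that the algebra homomorphism $\pi\colon kQ_{\mathcal{W}(p)}^{op}\to\mathcal{E}(\mathcal{W}(p))$ is surjective and that $I_{\mathcal{W}(p)}^{op}\subseteq\on{Ker}\pi$, so that $\pi$ descends to a surjection $\bar\pi\colon kQ_{\mathcal{W}(p)}^{op}/I_{\mathcal{W}(p)}^{op}\to\mathcal{E}(\mathcal{W}(p))$. Since both source and target have dimension $8p-6$, this surjection $\bar\pi$ is forced to be an isomorphism, which is equivalent to the equality $\on{Ker}\pi=I_{\mathcal{W}(p)}^{op}$.

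The step carrying the real content is the dimension count $\on{dim}_\mathbb{C} kQ_{\mathcal{W}(p)}^{op}/I_{\mathcal{W}(p)}^{op}=8p-6$, and this is where I expect the main obstacle to lie. The plan is to show that, modulo the quadratic relations \eqref{eq:pres_E(W(p))}, every path of length $\geq 3$ vanishes; concretely one must deduce $\overline{\alpha_{s,i}\beta_{s,j}\alpha_{s,k}}=0$ and $\overline{\beta_{s,i}\alpha_{s,j}\beta_{s,k}}=0$ for all $i,j,k$ from the length-$2$ generators alone. One then reads off the surviving basis degree by degree: the $2p$ vertices, the $4(p-1)$ arrows $\alpha_{s,i},\beta_{s,i}$, and, for each $1\leq s\leq p-1$, one surviving length-$2$ path in each of $\on{Hom}_{\mathcal{W}(p)}(P_s^+,P_s^+)$ and $\on{Hom}_{\mathcal{W}(p)}(P_s^-,P_s^-)$, giving $2(p-1)$ more, for a total of $2p+4(p-1)+2(p-1)=8p-6$. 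The delicate point is to be certain the ideal already closes up in degree $2$, so that no higher-degree relations are needed and the quotient acquires no extra dimension in degree $3$; this is exactly what guarantees $\on{Ker}\pi$ is generated in degree $2$ rather than being strictly larger than $I_{\mathcal{W}(p)}^{op}$.

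Granting the isomorphism, quadraticity is then immediate: every generator of $I_{\mathcal{W}(p)}^{op}$ listed in \eqref{eq:pres_E(W(p))} is a $\mathbf{k}$-linear combination of paths of length $2$, so $I_{\mathcal{W}(p)}^{op}=\langle(I_{\mathcal{W}(p)}^{op})_2\rangle$, which is precisely the definition of a quadratic algebra. Finally, $\mathcal{E}(\mathcal{W}(p))^!$ is computed directly from the definition of the quadratic dual recalled before Proposition~\ref{prop:Koszul}: writing $\mathcal{E}(\mathcal{W}(p))=kQ_{\mathcal{W}(p)}^{op}/I_{\mathcal{W}(p)}^{op}$ with underlying quiver $Q_{\mathcal{W}(p)}^{op}$, the opposite quiver is $Q_{\mathcal{W}(p)}$ and the dual relations are the orthogonal complement $(I_{\mathcal{W}(p)}^{op})_2^\perp$, which generate the defining ideal. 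Hence $\mathcal{E}(\mathcal{W}(p))^!=kQ_{\mathcal{W}(p)}/\langle(I_{\mathcal{W}(p)}^{op})^\perp\rangle$, as claimed.
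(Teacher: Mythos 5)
Your proposal follows essentially the same route as the paper: construct the surjection $\pi\colon kQ_{\mathcal{W}(p)}^{op}\to\mathcal{E}(\mathcal{W}(p))$, check $I_{\mathcal{W}(p)}^{op}\subseteq\on{Ker}\pi$, verify that the quadratic relations already force all paths of length $\geq 3$ to vanish (e.g.\ $\overline{\alpha_{s,1}\beta_{s,2}\alpha_{s,1}}\equiv\overline{\alpha_{s,2}\beta_{s,1}\alpha_{s,1}}\equiv 0$), and conclude by the dimension count $2p+4(p-1)+2(p-1)=8p-6$, with quadraticity and the formula for $\mathcal{E}(\mathcal{W}(p))^!$ read off from the definitions. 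This matches the paper's argument, and the point you flag as delicate is exactly the step the paper disposes of with its ``we can check'' remark.
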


Through the presentation of  $\mathcal{E}(\mathcal{W}(p))$ in Proposition~\ref{prop:E(W(p))}, we can compute $(I_{\mathcal{W}(p)}^{op})^\perp$ and show that $(I_{\mathcal{W}(p)}^{op})^\perp=\mathcal{I}_{\mathcal{W}(p)}$. Using the isomorphism~\eqref{eq:Yoneda}, we obtain:

\begin{theorem}\label{Koszul_dual_W(p)}
The algebra $\mathcal{E}(\mathcal{W}(p))$ is Koszul and there exists an isomorphism of graded algebras
\[
\on{Ext}^*_{\mathcal{W}(p)}(X_{\mathcal{W}(p)}, X_{\mathcal{W}(p)}) \cong \mathcal{E}(\mathcal{W}(p))^!.
\]

The category of logarithmic $\mathcal{W}(p)$-modules (equivalently the category of $\overline{U}_q(\on{sl}_2)$-modules) is Koszul.
\end{theorem}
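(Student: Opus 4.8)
The plan is to read off both conclusions from the two presentations already in hand: the isomorphism \eqref{eq:Yoneda} identifying $\on{Ext}^*_{\mathcal{W}(p)}(X_{\mathcal{W}(p)}, X_{\mathcal{W}(p)})$ with $kQ_{\mathcal{W}(p)}/\mathcal{I}_{\mathcal{W}(p)}$, and the presentation of $\mathcal{E}(\mathcal{W}(p))$ from Proposition~\ref{prop:E(W(p))}. First I would dispose of Koszulity. By Theorem~\ref{W(p)_Yoneda} the Yoneda algebra is generated in degrees $0$ and $1$; by the definition of a Koszul category this says exactly that $\mathcal{W}(p)\Mod^{log}$ is Koszul, and by the remark identifying this with the Green--Villa notion for $\mathcal{E}(\mathcal{W}(p))$, this simultaneously gives that $\mathcal{E}(\mathcal{W}(p))$ is Koszul and that the category of logarithmic $\mathcal{W}(p)$-modules is Koszul. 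This settles the first assertion and the final sentence at once.

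For the graded isomorphism $\on{Ext}^*_{\mathcal{W}(p)}(X_{\mathcal{W}(p)}, X_{\mathcal{W}(p)}) \cong \mathcal{E}(\mathcal{W}(p))^!$, I would reduce everything to the single identity $(I_{\mathcal{W}(p)}^{op})^\perp = \mathcal{I}_{\mathcal{W}(p)}$ of ideals inside $(kQ_{\mathcal{W}(p)})_2$. Indeed, $\mathcal{E}(\mathcal{W}(p))$ is quadratic with $\mathcal{E}(\mathcal{W}(p))^! = kQ_{\mathcal{W}(p)}/(I_{\mathcal{W}(p)}^{op})^\perp$ by Proposition~\ref{prop:E(W(p))}, while \eqref{eq:Yoneda} gives $\on{Ext}^*_{\mathcal{W}(p)}(X_{\mathcal{W}(p)}, X_{\mathcal{W}(p)}) \cong kQ_{\mathcal{W}(p)}/\mathcal{I}_{\mathcal{W}(p)}$; once the two defining ideals agree, the quotient algebras are identified as graded algebras, with $(\alpha_{s,i},\beta_{s,j})$ matched to $(\alpha^s_i,\beta^s_j)$.

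The computation of $(I_{\mathcal{W}(p)}^{op})^\perp$ decouples block by block, and within each block $1\le s\le p-1$ splits into the two length-two path spaces $X_s^+\to X_s^+$ (spanned by the $\alpha_{s,i}\beta_{s,j}$) and $X_s^-\to X_s^-$ (spanned by the $\beta_{s,i}\alpha_{s,j}$), each of dimension $4$. Using the degree-two pairing of Section~\ref{Section3} I would verify that $\langle \alpha_{s,i}\beta_{s,j}, \overline{\alpha_{s,k}\beta_{s,l}}\rangle=\delta_{ik}\delta_{jl}$, that the two pieces pair trivially against one another, and hence that the pairing is nondegenerate and diagonal on each $4$-dimensional piece. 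Imposing orthogonality against the three relations $\overline{\alpha_{s,1}\beta_{s,2}}-\overline{\alpha_{s,2}\beta_{s,1}}$, $\overline{\alpha_{s,1}\beta_{s,1}}$, $\overline{\alpha_{s,2}\beta_{s,2}}$ of \eqref{eq:pres_E(W(p))} forces the coefficients of $\alpha_{s,1}\beta_{s,1}$ and $\alpha_{s,2}\beta_{s,2}$ to vanish and those of $\alpha_{s,1}\beta_{s,2}$ and $\alpha_{s,2}\beta_{s,1}$ to coincide, leaving the one-dimensional space spanned by $\alpha_{s,1}\beta_{s,2}+\alpha_{s,2}\beta_{s,1}$ — precisely the generator of $\mathcal{I}_{\mathcal{W}(p)}$ in that block. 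The symmetric computation on the $X_s^-\to X_s^-$ piece yields $\beta_{s,1}\alpha_{s,2}+\beta_{s,2}\alpha_{s,1}$, so $(I_{\mathcal{W}(p)}^{op})^\perp=\mathcal{I}_{\mathcal{W}(p)}$ and the isomorphism follows.

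The main obstacle will be pure bookkeeping rather than any genuine difficulty: one must keep the order-reversal in $\langle \alpha\otimes\beta,\overline{\gamma}\otimes\overline{\delta}\rangle=\langle\alpha,\overline{\delta}\rangle\langle\beta,\overline{\gamma}\rangle$ straight, and remember that since $\mathcal{E}(\mathcal{W}(p))=kQ^{op}_{\mathcal{W}(p)}/I^{op}_{\mathcal{W}(p)}$ the relations live in $kQ^{op}_{\mathcal{W}(p)}$ while their orthogonal complement lives in $kQ_{\mathcal{W}(p)}$. Because every relevant space is finite-dimensional — each block contributing an $8$-dimensional degree-two space split into two $4$-dimensional pieces — no convergence issue arises and the content is entirely finite linear algebra. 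I would also remark that $\on{Ext}^*\cong\mathcal{E}(\mathcal{W}(p))^!$ could alternatively be extracted abstractly from Proposition~\ref{prop:Koszul} applied to the Koszul algebra governing $\mathcal{W}(p)\Mod^{log}$, but that route forces one to track the equivalence $\mathcal{W}(p)\Mod^{log}\simeq \mathcal{E}(\mathcal{W}(p))^{op}\Mod$ and the resulting opposite-algebra twist, whereas the direct comparison of ideals sidesteps this and produces the explicit graded isomorphism.
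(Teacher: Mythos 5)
Your proposal is correct and follows essentially the same route as the paper: the paper also deduces the isomorphism by computing $(I_{\mathcal{W}(p)}^{op})^\perp=\mathcal{I}_{\mathcal{W}(p)}$ from the presentation in Proposition~\ref{prop:E(W(p))} and then invoking the identification~\eqref{eq:Yoneda}, with Koszulity read off from the degree $0$ and $1$ generation in Theorem~\ref{W(p)_Yoneda}. You merely spell out the blockwise linear algebra that the paper leaves implicit, and your bookkeeping of the pairing and the order reversal is accurate.
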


\begin{remark}
We see that there is an algebra isomorphism $\mathcal{E}(\mathcal{W}(p)) \cong \mathcal{E}(\mathcal{W}(p))^{op}$ given by $\alpha_{i, s} \longmapsto \overline{\beta_{i, s}}$ and $\beta_{i, s} \longmapsto \overline{\alpha_{i, s}}$. 
\end{remark}

\delete{
We know that the ideal $\mathcal{I}_{\mathcal{W}(p)}$ defined in Section~\ref{Section6.1} is generated by the relations $\alpha_{s,1}\beta_{s,2}+\alpha_{s,2} \beta_{s,1}$, $\beta_{s,1}\alpha_{s,2}+\beta_{s,2}\alpha_{s,1}$ for $1 \leq s \leq p-1$. Based on the relations defining $I_{\mathcal{W}(p)}$, we see that $\mathcal{I}_{\mathcal{W}(p)} \subseteq I_{\mathcal{W}(p)}$. We have thus proved:

\begin{theorem}
There exist ideals $\mathcal{I}_{\mathcal{W}(p)}, I_{\mathcal{W}(p)} \subseteq kQ_{\mathcal{W}(p)}$ such that $\mathcal{I}_{\mathcal{W}(p)} \subseteq I_{\mathcal{W}(p)}$ and they induce the following surjective algebra homomorphisms:
\begin{equation*}
kQ_{\mathcal{W}(p)}  \  \raisebox{2pt}{\begin{tikzpicture}[scale=1, transform shape] \tikzset{>=stealth} \draw[->]  (0,0) -- (0.5,0); \draw[->]  (0,0) -- (0.4,0); \end{tikzpicture}} \ kQ_{\mathcal{W}(p)}/\mathcal{I}_{\mathcal{W}(p)} \cong \on{Ext}_{\mathcal{W}(p)}^*(X_{\mathcal{W}(p)}, X_{\mathcal{W}(p)}) \  \raisebox{2pt}{\begin{tikzpicture}[scale=1, transform shape] \tikzset{>=stealth} \draw[->]  (0,0) -- (0.5,0); \draw[->]  (0,0) -- (0.4,0); \end{tikzpicture}} \ kQ_{\mathcal{W}(p)}/I_{\mathcal{W}(p)} \cong \on{End}_{\mathcal{W}(p)}(\bigoplus_{L \in \on{Irr}(\mathcal{W}(p))}P_L)^{op}. 
\end{equation*}
\end{theorem}

\begin{remark}
It is proved in \cite{Nagatomo-Tsuchiya} that the category of logarithmic $\mathcal{W}(p)$-modules is equivalent as an abelian category to the category of finite dimensional modules for the restricted quantum group $\overline{U}_q(\on{sl}_2)$ at $q=e^{\frac{\pi i}{p}}$. Therefore their Yoneda algebras are isomorphic.  The Yoneda algebras of $\overline{U}_q(\on{sl}_2)$ are determined in \cite{GSTF} which also determines the indecomposable modules of the restricted quantum group. The abelian category of finite dimensional  $\overline{U}_q(\on{sl}_2)$-modules was also investigated earlier in \cite{Suter}. The study of some infinite dimensional modules over $\overline{U}_q(\on{sl}_2)$ was carried out in \cite{Xiao} earlier too. As conjectured in \cite{GSTF}, the two categories should be equivalent as ribbon braided monoidal categories. The Grothendieck ring of $\overline{U}_q(\on{sl}_2)$ has been determined in \cite{FGST}. This conjecture has been proved by a sequence of papers by various authors (see a recent preprint \cite[Section 1]{FL} for a list of references).   As our goal is to compare the representation categories of the triplet vertex operator algebra and those of its $C_2$-algebra and Zhu algebra, it is natural to obtain the description of the Ext quiver and Yoneda algebra of $\mathcal{W}(p)$ while remaining in the vertex algebra context. Thus the proofs given above do not rely on the category equivalence mentioned at the beginning of the remark.
\end{remark}
}

\subsection{The Ext algebra and endomorphism algebra of $A(\mathcal{W}(p))$} 
In this section, we compute the Ext algebra of $A(\mathcal{W}(p))$ as well as the endomorphism algebra of the sum of the projective covers within the category of $A(\mathcal{W}(p))$-modules without considering the filtered structure of $A(\mathcal{W}(p))$. Our aim is then to do a comparison with what we obtained in the previous sections. We know that $A(\mathcal{W}(p))$ is a direct sum of ideals given in Section~\ref{SectionZhuAlgebraA(W(p))}. Each summand has a unique irreducible module: 
\begin{itemize}
\item for $1 \leq i \leq p-1$, $X_i^+(0)$ is the irreducible module of $I_{h_i}(\cc) \cong \cc[x]/(x^2)$.
\item  for $2p+1 \leq i \leq 3p-1$, $X_{i-2p}^-(0)$ is the irreducible module of $M_{h_i}(\cc) \cong M_2(\cc)$.  The irreducible module of $M_{h_{2p}}(\cc)$ is $X_p^-(0)$.
\item  $\cc$ has $X_p^+(0)$ as its only irreducible module.
\end{itemize}
Therefore the sum $X_A$ of the simple $A(\mathcal{W}(p))$-modules is given by:
\begin{equation*}
X_A = \bigoplus_{i=2p+1}^{3p-1}X_{i-2p}^-(0) \oplus \bigoplus_{i=1}^{p-1} X_i^+(0) \oplus X_p^+(0) \oplus X_p^-(0).
\end{equation*}
We want to determine the Ext algebra $\on{Ext}_{A(\mathcal{W}(p))}^*(X_A, X_A)$ of $A(\mathcal{W}(p))$. Because $A(\mathcal{W}(p))$ is a direct sum of algebras, there are no non-trivial extensions between blocks corresponding to distinct summands. Therefore the Ext algebra decomposes as:
\begin{equation*}
\resizebox{\hsize}{!}{$
\begin{split}
\on{Ext}_{A(\mathcal{W}(p))}^*(X_A, X_A)= & \bigoplus_{i=2p+1}^{3p-1}\on{Ext}_{A(\mathcal{W}(p))}^*(X_{i-2p}^-(0), X_{i-2p}^-(0))\oplus \bigoplus_{i=1}^{p-1}\on{Ext}_{A(\mathcal{W}(p))}^*(X_i^+(0), X_i^+(0)) \\
& \oplus \on{Ext}_{A(\mathcal{W}(p))}^*(X_p^+(0), X_p^+(0)) \oplus \on{Ext}_{A(\mathcal{W}(p))}^*(X_p^-(0), X_p^-(0)).
\end{split}$}
\end{equation*}

For $2p+1 \leq i \leq 3p-1$, $M_{h_i}(\cc) \cong M_2(\cc)$ is semisimple so $X_{i-2p}^-(0) \cong \cc^2$ is a projective $M_{h_i}(\cc)$-module. Thus:
\[
\begin{array}{cl}
\on{Ext}_{A(\mathcal{W}(p))}^*(X_{i-2p}^-(0), X_{i-2p}^-(0))& =\on{Ext}_{M_{h_i}(\cc)}^*(X_{i-2p}^-(0), X_{i-2p}^-(0)), \\
& =\text{Hom}_{M_{h_i}(\cc)}(X_{i-2p}^-(0), X_{i-2p}^-(0)), \\
&  \cong \cc.
\end{array}
\]
Similarly we get $\on{Ext}_{A(\mathcal{W}(p))}^*(X_{p}^-(0), X_{p}^-(0)) \cong \cc$ and $\on{Ext}_{A(\mathcal{W}(p))}^*(X_p^+(0), X_p^+(0)) \cong \cc$.

Finally, using the following exact sequence:
\begin{align*}
\dots \longrightarrow \cc[x]/(x^2) \xrightarrow{\times x} \cc[x]/(x^2) \xrightarrow{\times x} \cc[x]/(x^2) \xrightarrow{\times x} \cc[x]/(x^2) \xrightarrow{\times x} (x) \longrightarrow 0,
\end{align*} 
we prove that for any for $1 \leq i \leq p-1$, the Yoneda algebra $\on{Ext}_{A(\mathcal{W}(p))}^*(X_i^+(0), X_i^+(0))=\on{Ext}_{\cc[x]/(x^2)}^*((x),(x))$ is isomorphic to $\cc[\alpha]$ with $\alpha$ a basis of $\on{Ext}_{\cc[x]/(x^2)}^1((x),(x))$.

\delete{Consider
It is a minimal projective resolution of the $\cc[x]/(x^2)$-module $(x)$, and thus for $n \geq 1$:
\[
\on{Ext}_{\cc[x]/(x^2)}^n((x),(x))  \cong \text{Hom}_{\cc[x]/(x^2)}(\cc[x]/(x^2),(x)) \cong \cc.
\]
We also have $\on{Ext}_{\cc[x]/(x^2)}^0((x),(x)) \cong \cc$.

Let $\alpha$ be a basis of $\on{Ext}_{\cc[x]/(x^2)}^1((x),(x))$. Then $\alpha$ is represented by a non-trivial $\cc[x]/(x^2)$-module homomorphism $\alpha : \cc[x]/(x^2) \longrightarrow (x)$. We have the following diagram
\begin{center}
 \begin{tikzpicture}[scale=1, transform shape]
\tikzset{>=stealth}
\node (1) at (2,0) []{$\dots$};
\node (2) at (4,0) []{$\cc[x]/(x^2)$};
\node (3) at (7,0) []{$\cc[x]/(x^2)$};
\draw[->]  (1) -- (2);
\draw[->]  (2) -- (3)  node[midway, above] {$\times x$};

\node (4) at (2,-2) []{$\dots$};
\node (5) at (4,-2) []{$\cc[x]/(x^2)$};
\node (6) at (7,-2) []{$\cc[x]/(x^2)$};
\node (7) at (10,-2) []{$(x)$};
\node (8) at (12,-2) []{$0$};
\draw[->]  (4) -- (5);
\draw[->]  (5) -- (6) node[midway, below] {$\times x$};
\draw[->]  (6) -- (7) node[midway, below] {$\times x$};
\draw[->]  (7) -- (8) ;

\draw[->,dashed]  (2) -- (5) node[midway, right] {$\times c$};
\draw[->,dashed]  (3) -- (6) node[midway, right] {$\times c$};
\draw[->]  (3) -- (7) node[midway, above right] {$\alpha$};

\node (9) at (4,-4) []{$(x)$};
\draw[->]  (5) -- (9) node[midway, right] {$\alpha$};
\end{tikzpicture}
\end{center}
All $\cc[x]/(x^2)$-module homomorphisms $\cc[x]/(x^2) \longrightarrow (x)$ are proportional and $\alpha$ is non-trivial so there exists $c \in \cc^*$ such that $\alpha=\times (cx)$. Hence the diagram commutes and $\alpha^2$ is represented by $c\alpha:  \cc[x]/(x^2) \longrightarrow (x)$. It is a non-trivial homomorphism and $\text{dim Ext}_{\cc[x]/(x^2)}^2((x),(x))=1$ so it is a basis of this space. By repeating this reasoning we see that $\alpha^n$ is a basis of $\on{Ext}_{\cc[x]/(x^2)}^n((x),(x))$ for all $n \geq 0$. We therefore have $\on{Ext}_{\cc[x]/(x^2)}^*((x),(x))=\cc[\alpha]$.}

We saw that the blocks of $\on{Ext}_{A(\mathcal{W}(p))}^*(X_A, X_A)$ are generated by the elements of degree $0$ and $1$ and there is no relations within blocks. Based on the quiver $kQ_{A(\mathcal{W}(p))}$, we see that the homomorphism of Lemma~\ref{kQandExt} is an isomorphism. Let $\gamma_i$ be the label of the loop on the vertex $X_i^+(0)$ in $Q_{A(\mathcal{W}(p))}$ for $1 \leq i \leq p-1$. We have found the following result:
\begin{proposition}
The Ext algebra of $A(\mathcal{W}(p))$ decomposes as:
\begin{equation*}
\on{Ext}_{A(\mathcal{W}(p))}^*(X_A, X_A) \cong  \bigoplus_{i=2p+1}^{3p-1}\cc \oplus \bigoplus_{i=1}^{p-1}\cc[\gamma_i] \oplus \cc \oplus \cc = kQ_{A(\mathcal{W}(p))}.
\end{equation*}
\end{proposition}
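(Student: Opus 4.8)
The plan is to reduce the computation to a block-by-block analysis using the decomposition
\[
A(\mathcal{W}(p)) \cong \bigoplus_{i=2p}^{3p-1}M_{h_i}(\cc) \oplus \bigoplus_{i=1}^{p-1}I_{h_i}(\cc) \oplus \cc
\]
recorded in Section~\ref{SectionZhuAlgebraA(W(p))}. Since a module over a direct product of algebras is canonically a direct sum of modules supported on the individual factors, there are no nonzero extensions between simple modules lying in distinct blocks. Consequently the Yoneda algebra $\on{Ext}_{A(\mathcal{W}(p))}^*(X_A, X_A)$ splits as a direct sum of the Yoneda algebras of the factors, and it suffices to compute $\on{Ext}^*$ of the unique simple module inside each factor and then reassemble.

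The semisimple factors are immediate. Each $M_{h_i}(\cc)\cong M_2(\cc)$ is semisimple, so its simple module $X_{i-2p}^-(0)$ (respectively $X_p^-(0)$ for $i=2p$) is projective; hence all higher Ext vanish and $\on{Ext}^*_{M_{h_i}(\cc)}(X_{i-2p}^-(0),X_{i-2p}^-(0))=\on{Hom}(X_{i-2p}^-(0),X_{i-2p}^-(0))\cong\cc$ by Schur's lemma, and likewise the factor $\cc$ contributes $\cc$ for $X_p^+(0)$. The substantive work is for the $p-1$ factors $I_{h_i}(\cc)\cong\cc[x]/(x^2)$, whose simple module I identify with the ideal $(x)$ (on which $x$ acts by zero), so that $\on{Ext}^*_{A(\mathcal{W}(p))}(X_i^+(0),X_i^+(0))=\on{Ext}^*_{\cc[x]/(x^2)}((x),(x))$. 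First I would write down the $2$-periodic minimal free resolution
\[
\cdots \xrightarrow{\times x}\cc[x]/(x^2)\xrightarrow{\times x}\cc[x]/(x^2)\xrightarrow{\times x}(x)\longrightarrow 0,
\]
which is exact because $\ker(\times x)=\on{im}(\times x)=(x)$. Applying $\on{Hom}_{\cc[x]/(x^2)}(-,(x))$ turns every connecting map $\times x$ into the zero map, since precomposition sends $f\mapsto\big(1\mapsto x f(1)\big)$ and $x$ kills $(x)$; the resulting cochain complex is $\cc\xrightarrow{0}\cc\xrightarrow{0}\cdots$, so $\on{Ext}^n\cong\cc$ for every $n\geq 0$.

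The step I expect to be the main obstacle is determining the \emph{multiplicative} structure of this block, that is, showing the Yoneda algebra is the polynomial ring $\cc[\gamma_i]$ and not merely a graded vector space with one-dimensional pieces. For this I would fix a chain map lifting a generator $\gamma_i$ of $\on{Ext}^1$ along the periodic resolution and verify by an explicit diagram chase that the $n$-fold Yoneda composite $\gamma_i^n$ again represents a nonzero class in $\on{Ext}^n$. The periodicity of the resolution forces the lift of $\gamma_i$ to be (up to scalar) the identity shift of the complex, so $\gamma_i^n\neq 0$ and generates the one-dimensional space $\on{Ext}^n$; this identifies the block with $\cc[\gamma_i]$.

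Finally I would assemble the summands and match the result with the path algebra. By Lemma~\ref{kQandExt} there is a graded algebra homomorphism $\varphi\colon kQ_{A(\mathcal{W}(p))}\to\on{Ext}^*_{A(\mathcal{W}(p))}(X_A, X_A)$. The quiver $Q_{A(\mathcal{W}(p))}$ determined in Section~\ref{SectionZhuAlgebraA(W(p))} consists of isolated vertices (from the semisimple blocks) together with a single loop $\gamma_i$ at each vertex $X_i^+(0)$ for $1\leq i\leq p-1$, so its path algebra is exactly $\bigoplus_{i=2p+1}^{3p-1}\cc\oplus\bigoplus_{i=1}^{p-1}\cc[\gamma_i]\oplus\cc\oplus\cc$. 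Since each block of the Yoneda algebra is generated in degrees $0$ and $1$ and carries no relations, $\varphi$ restricts to a degreewise isomorphism on every block and is therefore an isomorphism, yielding the claimed identification $\on{Ext}_{A(\mathcal{W}(p))}^*(X_A, X_A)\cong kQ_{A(\mathcal{W}(p))}$.
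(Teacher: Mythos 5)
Your proposal is correct and follows essentially the same route as the paper: split the Yoneda algebra along the block decomposition of $A(\mathcal{W}(p))$, dispose of the semisimple factors by projectivity, compute $\on{Ext}^*_{\cc[x]/(x^2)}((x),(x))\cong\cc[\gamma_i]$ from the $2$-periodic resolution (including the lift argument showing $\gamma_i^n\neq 0$), and identify the result with $kQ_{A(\mathcal{W}(p))}$ via the homomorphism of Lemma~\ref{kQandExt}. No gaps.
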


\begin{corollary}
For any $1 \leq i \leq p-1$, we have $\on{Ext}_{A(\mathcal{W}(p))}^*(X_i^+(0), X_i^+(0))=\cc[\gamma_i]$. Therefore $\text{proj.dim }X_i^{+}(0)=+\infty$, and the global dimension of $A(\mathcal{W}(p))$ is infinite.
\end{corollary}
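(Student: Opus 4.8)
The plan is to read off both assertions directly from the block computation carried out just above, so that only two short homological arguments remain.

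First I would establish the identity $\on{Ext}_{A(\mathcal{W}(p))}^*(X_i^+(0), X_i^+(0))=\cc[\gamma_i]$. By the preceding proposition the summand of the Ext algebra attached to the vertex $X_i^+(0)$ (for $1\le i\le p-1$) is exactly $\on{Ext}_{\cc[x]/(x^2)}^*((x),(x))\cong\cc[\alpha]$, a polynomial ring on a degree-one generator $\alpha$ spanning $\on{Ext}^1_{\cc[x]/(x^2)}((x),(x))$. Since $\gamma_i$ is, by definition, the loop at $X_i^+(0)$ in $Q_{A(\mathcal{W}(p))}$ and this loop indexes a basis of $\on{Ext}^1_{A(\mathcal{W}(p))}(X_i^+(0),X_i^+(0))$, I would identify $\gamma_i$ with $\alpha$ and conclude $\on{Ext}_{A(\mathcal{W}(p))}^*(X_i^+(0), X_i^+(0))=\cc[\gamma_i]$.

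Next I would deduce the projective dimension. The crucial feature of $\cc[\gamma_i]$ is that, being a polynomial ring in one variable, it has a nonzero component in every nonnegative degree; hence $\on{Ext}^n_{A(\mathcal{W}(p))}(X_i^+(0),X_i^+(0))\cong\cc\ne 0$ for all $n\ge 0$. I would then apply the standard criterion that $\text{proj.dim }M$ is the largest $n$ with $\on{Ext}^n(M,-)\ne 0$: if $X_i^+(0)$ had finite projective dimension $d$, then $\on{Ext}^n_{A(\mathcal{W}(p))}(X_i^+(0),-)$ would vanish for $n>d$, contradicting the nonvanishing of $\on{Ext}^n(X_i^+(0),X_i^+(0))$ for arbitrarily large $n$. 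Therefore $\text{proj.dim }X_i^+(0)=+\infty$. Finally, since the global dimension of a finite-dimensional algebra is the supremum of the projective dimensions of its (simple) modules, and $X_i^+(0)$ is a simple $A(\mathcal{W}(p))$-module of infinite projective dimension, the global dimension of $A(\mathcal{W}(p))$ is infinite.

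There is no genuine obstacle here: the minimal resolution $\cdots\to\cc[x]/(x^2)\xrightarrow{\times x}\cc[x]/(x^2)\to(x)\to 0$ and the block decomposition already supply all the input. The only points requiring care are the bookkeeping identification of the quiver loop $\gamma_i$ with the algebra generator $\alpha$, and invoking the Ext-vanishing characterization of projective dimension in the correct direction.
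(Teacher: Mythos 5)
Your proposal is correct and follows essentially the same route as the paper: the identification of the block $\on{Ext}_{A(\mathcal{W}(p))}^*(X_i^+(0),X_i^+(0))$ with $\on{Ext}_{\cc[x]/(x^2)}^*((x),(x))\cong\cc[\alpha]$ via the periodic minimal resolution is exactly what the paper does just before stating the corollary, and the deductions about projective and global dimension are the standard ones the paper intends. The only additional care you take — explicitly matching the quiver loop $\gamma_i$ with the generator $\alpha$ and invoking the Ext-vanishing characterization of projective dimension — is implicit in the paper but entirely consistent with it.
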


Using the fact that the algebra $\on{Ext}_{A(\mathcal{W}(p))}^*(X_A, X_A) $ decomposes as a direct sum of polynomial rings and fields, we obtain the following corollary.

\begin{corollary}
The Gelfand-Kirillov dimension of the Yoneda algebra $\on{Ext}_{A(\mathcal{W}(p))}^{*}(X_A, X_A)$ is $1$.
\end{corollary}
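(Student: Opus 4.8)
The plan is to read the answer directly off the decomposition of the Yoneda algebra furnished by the preceding proposition, so that the computation reduces to the behaviour of Gelfand-Kirillov dimension under finite direct sums. Write $B=\on{Ext}_{A(\mathcal{W}(p))}^*(X_A, X_A)$. By the preceding proposition we have
\[
B \cong \bigoplus_{i=2p+1}^{3p-1}\cc \oplus \bigoplus_{i=1}^{p-1}\cc[\gamma_i] \oplus \cc \oplus \cc,
\]
a finite direct sum each of whose summands is either the field $\cc$ or the one-variable polynomial ring $\cc[\gamma_i]$; in particular $B$ is a finitely generated commutative $\cc$-algebra.

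First I would recall the two properties of Gelfand-Kirillov dimension needed for the argument (see \cite{Krause-Lenagan}): for a finite direct product of algebras one has $\on{GKdim}(A_1\oplus\cdots\oplus A_n)=\max_{1\le j\le n}\on{GKdim}(A_j)$, and for a finitely generated commutative algebra the Gelfand-Kirillov dimension coincides with the Krull dimension (the latter fact is already recorded in the text preceding the corollary). The first property reduces the problem to the individual summands, and the second lets me evaluate each of them geometrically.

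Then I would compute the dimension of each summand: $\on{GKdim}(\cc)=0$ since $\cc$ is finite dimensional, while $\on{GKdim}(\cc[\gamma_i])=1$, this being the Krull dimension of the affine line. Since $p\ge 2$, the index set $1\le i\le p-1$ is nonempty, so at least one polynomial summand (namely $\cc[\gamma_1]$) occurs. Applying the direct-sum formula gives
\[
\on{GKdim}(B)=\max\big(0,1,0,0\big)=1,
\]
which is the assertion of the corollary.

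I do not expect any serious obstacle: the entire content is packaged in the decomposition of the previous proposition, and the corollary is a bookkeeping consequence of it. The only point deserving a moment's care is to invoke the correct form of the direct-sum formula, namely the one for a finite direct product, under which dimensions combine by the maximum, rather than the tensor-product formula, under which they would add; but this distinction is immediate from the definition of $\on{GKdim}$ in \cite{Krause-Lenagan}.
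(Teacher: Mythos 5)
Your argument is correct and coincides with the paper's: the corollary is deduced directly from the decomposition of $\on{Ext}_{A(\mathcal{W}(p))}^*(X_A, X_A)$ as a finite direct sum of copies of $\cc$ and of one-variable polynomial rings $\cc[\gamma_i]$, whence the Gelfand-Kirillov dimension is the maximum over the summands, namely $1$. Your write-up simply makes explicit the two standard facts (behaviour under finite direct sums, and agreement with Krull dimension for finitely generated commutative algebras) that the paper leaves implicit.
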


The projective covers of the simple $A(\mathcal{W}(p))$-modules are described as follows:
\begin{itemize}
\item for $1 \leq i \leq p-1$, $\cc[x]/(x^2)$ is a projective $\cc[x]/(x^2)$-module and $\text{Rad}(\cc[x]/(x^2))=(x)$ is superfluous, so $\cc[x]/(x^2) \longrightarrow X_i^+(0)$ is the projective cover of $X_i^+(0)$.
\item  for $2p+1 \leq i \leq 3p-1$, $X_{i-2p}^-(0)$ is an irreducible module of the semisimple algebra $M_{h_i}(\cc)$ so it is projective and therefore its own projective cover. Likewise $X_p^-(0)$ is a projective $M_{h_{2p}}(\cc)$-module.
\item  $X_p^+(0)$ is a projective $\cc$-module.
\end{itemize}
It follows that $\bigoplus_{L \in \on{Irr}(A(\mathcal{W}(p)))}P_L \cong  \bigoplus_{i=2p}^{3p-1}\cc^2 \oplus \bigoplus_{i=1}^{p-1}\cc[x]/(x^2) \oplus \cc$. Therefore the endomorphism algebra $\mathcal{E}(A(\mathcal{W}(p)))=\on{End}_{A(\mathcal{W}(p))}(\bigoplus_{L \in \on{Irr}(A(\mathcal{W}(p)))}P_L)$ is given by:

\begin{equation*}
\begin{split}
\mathcal{E}(A(\mathcal{W}(p))) & \cong  \bigoplus_{i=2p}^{3p-1}\on{End}_{A(\mathcal{W}(p))}(\cc^2) \oplus \bigoplus_{i=1}^{p-1}\on{End}_{A(\mathcal{W}(p))}(\cc[x]/(x^2)) \oplus \on{End}_{A(\mathcal{W}(p))}(\cc), \\
&  \cong  \bigoplus_{i=2p}^{3p-1}\cc \oplus \bigoplus_{i=1}^{p-1}\cc[\gamma_i]/(\gamma_i^2) \oplus \cc, \\
&  \cong  kQ_{A(\mathcal{W}(p))}/ \langle \gamma_1^2,\dots,\gamma_{p-1}^2 \rangle.
\end{split}
\end{equation*}

With a reasoning similar to that of Theorem~\ref{Koszul_dual_W(p)} :

\begin{theorem}\label{Koszul_dual_A(W(p))}
The algebra $\mathcal{E}(A(\mathcal{W}(p)))$ is Koszul and there exists an isomorphism of graded algebras
\[
\on{Ext}^*_{A(\mathcal{W}(p))}(X_A, X_A) \cong \mathcal{E}(A(\mathcal{W}(p)))^!.
\]

The category of $A(\mathcal{W}(p))$-modules is Koszul.
\end{theorem}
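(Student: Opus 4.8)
The plan is to imitate the proof of Theorem~\ref{Koszul_dual_W(p)}, extracting all three assertions from the two presentations already obtained above, namely $\on{Ext}^*_{A(\mathcal{W}(p))}(X_A, X_A) \cong kQ_{A(\mathcal{W}(p))}$ and $\mathcal{E}(A(\mathcal{W}(p))) \cong kQ_{A(\mathcal{W}(p))}/\langle \gamma_1^2, \dots, \gamma_{p-1}^2\rangle$. Because every block of $A(\mathcal{W}(p))$ is either semisimple or a copy of the dual numbers $\cc[x]/(x^2)$, the computations are substantially lighter than in the $\mathcal{W}(p)$ case.

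First I would settle the Koszulity statements. The computation above exhibits $\on{Ext}^*_{A(\mathcal{W}(p))}(X_A, X_A)$ as the full path algebra $kQ_{A(\mathcal{W}(p))}$, which is tautologically generated in degrees $0$ and $1$ by its vertices and loops. Hence $A(\mathcal{W}(p))\Mod$ is Koszul directly from the definition, and, by the discussion following that definition, equivalently $\mathcal{E}(A(\mathcal{W}(p)))$ is Koszul in the sense of \cite{Green-Villa}. This disposes of both Koszulity claims and, via Proposition~\ref{prop:Koszul}, also guarantees that $\mathcal{E}(A(\mathcal{W}(p)))$ is quadratic.

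It remains to identify the quadratic dual with the Yoneda algebra. I would work directly on the quiver $Q_{A(\mathcal{W}(p))}$, which consists of $p-1$ vertices each carrying a single loop $\gamma_i$ together with $p+1$ isolated vertices. Since distinct loops are attached to distinct vertices, the only composable paths of length two are the $\gamma_i^2$, so the degree-$2$ relation space is $I_2 = \on{span}\{\gamma_i^2\} = (kQ_{A(\mathcal{W}(p))})_2$, the entire degree-$2$ component. Under the nondegenerate pairing of Section~\ref{Section3} this forces $I_2^\perp = 0$, whence $\mathcal{E}(A(\mathcal{W}(p)))^! = kQ_{A(\mathcal{W}(p))}^{op}/\langle 0 \rangle = kQ_{A(\mathcal{W}(p))}^{op}$. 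Block by block this is nothing but the classical Koszul duality $(\cc[x]/(x^2))^! \cong \cc[\gamma]$ for the loop blocks and the self-duality $\cc^! \cong \cc$ for the isolated vertices. Comparing with the Ext computation then gives $\on{Ext}^*_{A(\mathcal{W}(p))}(X_A, X_A) \cong kQ_{A(\mathcal{W}(p))} \cong \mathcal{E}(A(\mathcal{W}(p)))^!$, as desired.

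The only subtlety worth flagging is the bookkeeping of opposite-algebra conventions: the quadratic dual is built on $kQ_{A(\mathcal{W}(p))}^{op}$ while the Yoneda algebra is identified with $kQ_{A(\mathcal{W}(p))}$. This is harmless here precisely because $Q_{A(\mathcal{W}(p))}$ has only loops, so $Q_{A(\mathcal{W}(p))}^{op} \cong Q_{A(\mathcal{W}(p))}$ and every block is commutative. Consequently there is no genuine obstacle in this argument; the substantive work was already carried out in establishing the Ext algebra and the presentation of $\mathcal{E}(A(\mathcal{W}(p)))$ above.
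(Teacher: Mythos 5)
Your proposal is correct and follows essentially the same route as the paper, which simply invokes ``a reasoning similar to that of Theorem~\ref{Koszul_dual_W(p)}'': one reads off Koszulity from the already-computed Yoneda algebra $kQ_{A(\mathcal{W}(p))}$ (generated in degrees $0$ and $1$), notes that $\mathcal{E}(A(\mathcal{W}(p)))=kQ_{A(\mathcal{W}(p))}/\langle\gamma_1^2,\dots,\gamma_{p-1}^2\rangle$ is quadratic with $I_2$ equal to the whole degree-$2$ component, hence $I_2^\perp=0$ and $\mathcal{E}(A(\mathcal{W}(p)))^!\cong kQ_{A(\mathcal{W}(p))}^{op}\cong kQ_{A(\mathcal{W}(p))}$. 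Your observation that the loops-only quiver makes the op-convention harmless is exactly the point that lets the comparison close.
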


\delete{
\begin{theorem}
For the Zhu algebra $A(\mathcal{W}(p))$, we have the following homomorphisms:
\begin{equation*}
kQ_{A(\mathcal{W}(p))} \overset{\cong}{\longrightarrow}  \on{Ext}_{A(\mathcal{W}(p))}^*(X_A, X_A)  \  \raisebox{2pt}{\begin{tikzpicture}[scale=1, transform shape] \tikzset{>=stealth} \draw[->]  (0.4,0) -- (1,0); \draw[->]  (0.4,0) -- (0.9,0); \end{tikzpicture}} \ kQ_{A(\mathcal{W}(p))}/ \langle \gamma_1^2,\dots,\gamma_{p-1}^2 \rangle \cong \on{End}_{A(\mathcal{W}(p))}(\mathcal{P}_A). 
\end{equation*}
\end{theorem}

We do not need to add the subscript "op" to the endomorphism algebra because it is commutative. In particular, we see that $\on{End}_{A(\mathcal{W}(p))}(\mathcal{P}_A)$ is generated in degree $0$ and $1$ with relations in degree $2$.
}

\subsection{The Ext algebra and endomorphism algebra of $\on{gr}A(\mathcal{W}(p))$} 
In this section, we set ourselves in the category of $\on{gr}A(\mathcal{W}(p))$-modules without considering the graded structure of $\on{gr}A(\mathcal{W}(p))$. As $\on{gr}A(\mathcal{W}(p))$ is commutative and local, the only simple module is $X=\cc$ and its projective cover is $P_X=\on{gr}A(\mathcal{W}(p))$. It follows that $\bigoplus_{L \in \on{Irr}(\on{gr}A(\mathcal{W}(p)))}P_L =P_X = \on{gr}A(\mathcal{W}(p)) $. Therefore the algebra $\mathcal{E}(\on{gr} A(\mathcal{W}(p)))=\on{End}_{\on{gr}A(\mathcal{W}(p))}(\bigoplus_{L \in \on{Irr}(\on{gr}A(\mathcal{W}(p)))}P_L)$ is given by:
\begin{equation*}
\mathcal{E}(\on{gr} A(\mathcal{W}(p)))  =  \on{End}_{\on{gr}A(\mathcal{W}(p))}(\on{gr}A(\mathcal{W}(p))) \cong \on{gr}A(\mathcal{W}(p)),
\end{equation*}
because the graded Zhu algebra is commutative.

From what we saw, the sum $X_{\on{gr}A}$ of the simple $\on{gr}A(\mathcal{W}(p))$-modules is given by $X_{\on{gr}A} = \cc$. The computation of the Ext algebra $\on{Ext}_{\on{gr}A(\mathcal{W}(p))}^*(X_{\on{gr}A}, X_{\on{gr}A})$ is not practical because the algebra $\on{gr}A(\mathcal{W}(p))$ is not a complete intersection. Therefore when constructing the minimal resolution, there is no way to determine after how many steps the complex becomes exact. The only way would be to compute until we reach an exact complex, which is not reasonable. Instead of computing directly the Ext algebra of $\on{gr}A(\mathcal{W}(p))$, we will construct a complete intersection approximation of $\on{gr}A(\mathcal{W}(p))$, which will then enable us to determine a quotient of the Ext algebra we are looking for.

We can see from Section~\ref{SectiongrA(W(p))} that we can write $R= \on{gr}A(\mathcal{W}(p)) = \cc[E,F,H,\omega]/(r_1, \dots, r_{11})$ where
\begin{align*}
\renewcommand\arraystretch{1.2}
\left\{ \begin{array}{llll}
r_1=E^2, & r_2=F^2, & r_3=H^3, & r_4=\omega^{3p-1},\\
r_5= \omega^pE, & r_6=\omega^pF, & r_7=\omega^pH, & r_8=H^2-C_p\omega^{2p-1},\\
r_9=EF+C_p\omega^{2p-1}, & r_{10}=EH, & r_{11}=FH. &
\end{array} \right.
\end{align*}
with $C_p=\frac{(4p)^{2p-1}}{(2p-1)!^2}$. For all $1 \leq j \leq 11$, we write $r_j=\sum_{i=1}^4 c_{j, i} x_i$ with $x_1=E, x_2=F, x_3=H, x_4=\omega$, and $c_{j, i} \in \cc[E,F,H,\omega]$. Furthermore, for an algebra $A$ with trivial module $\cc$, we will write $H^*(A)$ for the object $\on{Ext}^{*}_{A}(\cc,\cc)$. Define $R'=\cc[E,F,H,\omega]/(r_1, \dots, r_{8})$. The algebra $R$ is isomorphic to $R'/(r_9,r_{10},r_{11})$.  

The sequence $(r_1,r_2,r_3,r_4)$ is regular in $\cc[E,F,H,\omega]$. We add four variables $x_5, x_6, x_7, x_8$ to $\cc[E,F,H,\omega]$ and define the relations:
\begin{align*}
\renewcommand\arraystretch{1.2}
\left\{\begin{array}{ll}
r_1=E^2, & \widetilde{r}_5= \omega^pE-x_5^2,\\
r_2=F^2, &  \widetilde{r}_6=\omega^pF -x_6^2, \\
r_3=H^3, &  \widetilde{r}_7=\omega^pH -x_7^2, \\
r_4=\omega^{3p-1}, & \widetilde{r}_8=H^2-C_p\omega^{2p-1}-x_8^2, 
\end{array} \right.
\end{align*}
so that $(r_1, \dots, \widetilde{r}_8)$ is a regular sequence in $\cc[E,F,H,\omega, x_5, x_6, x_7, x_8]$. It follows that:
\begin{align*}
\widetilde{R}=\cc[E,F,H,\omega, x_5, x_6, x_7, x_8]/(r_1, \dots, \widetilde{r}_8)
\end{align*}
is a complete intersection ring.

The composition $\widetilde{R} \longrightarrow R' \longrightarrow R$ is denoted by $\pi$. It induces a homomorphism on the cohomology $\pi^{\#} :H^*(R) \longrightarrow H^*(\widetilde{R})$. We will compute $\on{Im} \pi^{\#}$, and thus obtain a quotient of $H^*(R)$.

Because $\widetilde{R}$ is a complete intersection, a result of Tate (\cite[Theorem 4]{Tate}) states that a free resolution of $\cc$ as an $\widetilde{R}$-module is given as an $\widetilde{R}$-algebra by:
\begin{align*}
Q= \widetilde{R} \langle t_1, \dots, t_{8}, s_1, \dots, s_8 \rangle,
\end{align*}
where the $t_i$'s have degree $1$, the $s_j$'s have degree $2$ and
\begin{align*}
\left\{ \renewcommand\arraystretch{1.2}\begin{array}{l}
\partial_1 t_1=E, \\
\partial_1 t_2=F,\\
\partial_1 t_3=H,\\
\partial_1 t_4=\omega,\\
\partial_1 t_5=x_5,\\
\partial_1 t_6=x_6,\\
\partial_1 t_7=x_7,\\
\partial_1 t_8=x_8,
\end{array} \right. \hspace{2cm} \left\{ \renewcommand\arraystretch{1.2}\begin{array}{l}
\partial_2 s_1=E t_1,\\
\partial_2 s_2=F t_2,\\
\partial_2 s_3= H^2 t_3,\\
\partial_2 s_4= \omega^{3p-2} t_4,\\
\partial_2 s_5=\omega^p t_1-x_5 t_5,\\
\partial_2 s_6=\omega^p t_2-x_6 t_6,\\
\partial_2 s_7=\omega^p t_3-x_7 t_7,\\
\partial_2 s_8= H t_3-C_p\omega^{2p-2} t_4-x_8 t_8.
\end{array} \right.
\end{align*}

We call $Q_i$ the subspace of $Q$ of degree $i$ and we see that $\partial_i :Q_i \longrightarrow (x_1, \dots , x_{8})Q_{i-1}$. Hence the induced map $\partial_i^*:  \text{Hom}_{\widetilde{R}}(Q_{i-1}, \cc)  \longrightarrow  \text{Hom}_{\widetilde{R}}(Q_i, \cc)$ is zero, and $\text{Hom}_{\widetilde{R}}(Q_i, \cc) = H^{i}(\widetilde{R})$ for all $i \geq 0$.

For all  $1 \leq i \leq 8$, we write 
\[\begin{array}[t]{cccc}
\alpha_i: & Q_1 & \longrightarrow & \cc \\
           & t_j & \longmapsto & \delta_{i, j} 
\end{array} \text{ and } \begin{array}[t]{cccc}
\beta_i: & Q_2 & \longrightarrow & \cc \\
           & s_j & \longmapsto & \delta_{i, j}, \\
           & t_j t_k & \longmapsto & 0.
\end{array}\]
By direct computation of the Yoneda products (cf. \cite[p. 64]{Carlson-Townsley-Valeri-Zhang}), we find that $\alpha_4^2=0$, $-\alpha_8^2=\alpha_3^2$, and for $1 \leq i < j \leq 8$, $\alpha_i \alpha_j=-\alpha_j \alpha_i$. We can describe a basis of $H^2(\widetilde{R})$ as the dual basis of $Q_2$. The correspondence is then:
\begin{align*}
\renewcommand\arraystretch{1.2}
\begin{array}{| l | c | c | c | c | c | c | c | c | c | c |}
\hline 
\text{Basis of } Q_2 & t_i t_j & s_1& s_2 & s_3 & s_4 & s_5 & s_6 & s_7 & s_8 \\
\hline 
\text{Basis of } H^{2}(\widetilde{R}) & \alpha_i \alpha_j & \alpha_1^2 & \alpha_2^2 & \beta_3 & \beta_4 & -\alpha_5^2 & -\alpha_6^2 & -\alpha_7^2 & -\alpha_8^2 \\
\hline
\end{array}
\end{align*}
We then determine that $\beta_3$ and $\beta_4$ commute with the $\alpha_i$'s, and that $\beta_3 \beta_4=\beta_4\beta_3$. We write below the correspondence between the basis of $Q_3$ and $H^3(\widetilde{R})$:
\begin{align*}
\resizebox{\hsize}{!}{$
\renewcommand\arraystretch{1.2}
\begin{array}{| l | c | c | c | c | c | c | c | c | c | c |}
\hline 
\text{Basis of } Q_3 & t_i t_j t_k & t_i s_1& t_i s_2 & t_i s_3 & t_i s_4 & t_i s_5 & t_i s_6 & t_i s_7 & t_i s_8 \\
\hline 
\text{Basis of } H^{3}(\widetilde{R}) & \alpha_i \alpha_j \alpha_k & \alpha_i\alpha_1^2 & \alpha_i\alpha_2^2 & \alpha_i\beta_3 & \alpha_i\beta_4 & \alpha_i(-\alpha_5^2) & \alpha_i(-\alpha_6^2) & \alpha_i(-\alpha_7^2) & \alpha_i(-\alpha_8^2) \\
\hline
\end{array}$}
\end{align*}

Next we prove the following proposition:

\begin{proposition}\label{ExtRtilde}
For any $m \in \mathbb{N}^*$, let $t_1^{a_1}\dots t_8^{a_8}s_1^{b_1}\dots s_8^{b_8} \in Q_m$. Its dual element in $H^{m}(\widetilde{R})$ is a non-trivial multiple of $\alpha_1^{a_1}\dots \alpha_8^{a_8}(\alpha_1^2)^{b_1}(\alpha_2^2)^{b_2} \beta_3^{b_3} \beta_4^{b_4}(-\alpha_5^2)^{b_5}\dots(-\alpha_8^2)^{b_8}$. This implies that:
\begin{align*}
H^{*}(\widetilde{R})=\cc[\beta_3,\beta_4] \otimes \left(\cc\langle \alpha_1, \dots, \alpha_8 \rangle /\left( \begin{array}{c} \alpha_i \alpha_j+\alpha_j \alpha_i \ \forall i \neq j, \\ \alpha_4^2, \\ \alpha_8^2+\alpha_3^2 \end{array}\right) \right).
\end{align*}
\end{proposition}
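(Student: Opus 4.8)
The plan is to leverage two facts already in place: that $H^\ast(\widetilde R)=\on{Ext}^\ast_{\widetilde R}(\cc,\cc)$ has, as a graded vector space, the dual basis to the monomial basis $\{t_1^{a_1}\cdots t_8^{a_8}s_1^{b_1}\cdots s_8^{b_8}\}$ of the Tate resolution $Q$ (because every differential $\partial_i$ lands in $(x_1,\dots,x_8)Q_{i-1}$, so all maps induced on $\on{Hom}_{\widetilde R}(-,\cc)$ vanish), and that the low-degree Yoneda products have already been pinned down in the $H^2$ and $H^3$ tables. The statement to be proved is then exactly that the multiplicative structure propagates this identification to all degrees. First I would record the governing principle for complete intersections: since $\widetilde R=S/(r_1,\dots,\widetilde r_8)$ with $S$ a polynomial ring modulo a regular sequence, $\on{Ext}^\ast_{\widetilde R}(\cc,\cc)$ is the universal enveloping algebra of the homotopy Lie algebra $\pi=\pi^1\oplus\pi^2$, concentrated in degrees $1$ and $2$, with $\pi^1$ spanned by $\alpha_1,\dots,\alpha_8$ and $\pi^2$ by the $s_j$-duals, $\pi^2$ central, $\pi^{\geq3}=0$, and the only nonzero bracket $\pi^1\times\pi^1\to\pi^2$ dual to the quadratic parts of the relations.

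The second step is to read off this bracket from the quadratic parts of the eight relations. All eight quadratic forms are sums of pure squares with no $x_ix_j$ cross term: $r_1,r_2$ contribute $E^2,F^2$; $\widetilde r_5,\widetilde r_6,\widetilde r_7$ contribute $-x_5^2,-x_6^2,-x_7^2$; $\widetilde r_8$ contributes $H^2-x_8^2$; while $r_3=H^3$ and $r_4=\omega^{3p-1}$ have no quadratic part (indeed $3p-1\geq5$, and for every $p\geq2$ no relation contributes a bare $\omega^2$). Consequently one gets $\alpha_i\alpha_j+\alpha_j\alpha_i=0$ for all $i\neq j$ (no cross terms), $\alpha_4^2=0$ (no square of $\omega$ occurs), and $\alpha_3^2+\alpha_8^2=0$ (from the single form $H^2-x_8^2$, both sides equal to the class dual to $s_8$), while $\alpha_1^2,\alpha_2^2,-\alpha_5^2,-\alpha_6^2,-\alpha_7^2,-\alpha_8^2$ are precisely the classes dual to $s_1,s_2,s_5,s_6,s_7,s_8$. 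This identifies six of the eight $s_j$-duals with $\pm\alpha_i^2$ and leaves only $\beta_3,\beta_4$ (dual to $s_3,s_4$, arising from the higher-order relations $H^3,\omega^{3p-1}$) as genuinely new, central, polynomial generators. These are exactly the signs and squares in the $H^2$ and $H^3$ tables, so I would verify them directly from the Yoneda-product formula of \cite[p.~64]{Carlson-Townsley-Valeri-Zhang} rather than only quoting the structure theorem, keeping the argument self-contained.

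With generators and relations in hand, the third step is the monomial statement. I would induct on homological degree $m$ that the Yoneda monomial $\alpha_1^{a_1}\cdots\alpha_8^{a_8}(\alpha_1^2)^{b_1}(\alpha_2^2)^{b_2}\beta_3^{b_3}\beta_4^{b_4}(-\alpha_5^2)^{b_5}\cdots(-\alpha_8^2)^{b_8}$ is a nonzero scalar multiple of the functional dual to $t_1^{a_1}\cdots t_8^{a_8}s_1^{b_1}\cdots s_8^{b_8}$. The base cases $m\leq3$ are the tables; the inductive step multiplies one degree-$1$ class $\alpha_i$ (respectively a central degree-$2$ class $\beta_3,\beta_4$) into a monomial of degree $m-1$ (respectively $m-2$), using the chain-level Yoneda formula and the fact that $\partial$ maps into $(x_1,\dots,x_8)Q$, which forces the product to be supported on the single expected dual monomial with a controllable nonzero coefficient. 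Since these dual monomials already form a $\cc$-basis of $H^\ast(\widetilde R)$ by Step 1, the corresponding Yoneda monomials are linearly independent and spanning, so $H^\ast(\widetilde R)$ is generated by $\alpha_1,\dots,\alpha_8,\beta_3,\beta_4$ subject to exactly the listed relations; the decomposition $\cc[\beta_3,\beta_4]\otimes(\cc\langle\alpha_1,\dots,\alpha_8\rangle/(\cdots))$ then follows from the centrality and algebraic independence of $\beta_3,\beta_4$, and a Hilbert-series comparison in each degree confirms there are no further relations.

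The main obstacle I anticipate is the bookkeeping in Step 3: controlling nonvanishing and the precise nonzero scalars of the higher Yoneda products uniformly in $m$ and $p$, since the relations mix a quadratic pure-square pattern with the genuinely higher-order relations $H^3$ and $\omega^{3p-1}$, whose operators $\beta_3,\beta_4$ must be shown to act freely (no hidden relation such as $\beta_3\beta_4=0$ or $\alpha_i\beta_j=0$). The homotopy Lie algebra description is exactly what excludes these a priori: $\pi^{\geq3}=0$ with $\pi^2$ central forces $U(\pi)$ to be the stated $\cc[\beta_3,\beta_4]\otimes(\text{enveloping algebra of }\pi^1)$. Thus if the direct induction becomes unwieldy I would fall back on that structure theorem to close the argument.
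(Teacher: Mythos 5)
Your proposal is correct and follows essentially the same route as the paper: use minimality of the Tate resolution (every $\partial_i$ lands in $(x_1,\dots,x_8)Q_{i-1}$) to identify $H^m(\widetilde R)$ with the dual of $Q_m$, pin down the degree-$2$ and degree-$3$ Yoneda products by explicit chain-level computation following \cite{Carlson-Townsley-Valeri-Zhang}, and then induct on $m$ by lifting cochains through $Q$ to show each dual monomial is a nonzero multiple of the corresponding product of the $\alpha_i$'s and $\beta_j$'s. The one genuine difference is your appeal to the homotopy-Lie-algebra structure theorem for complete intersections ($\pi^{\geq 3}=0$, $\on{Ext}^*_{\widetilde R}(\cc,\cc)=U(\pi)$ with $\pi^2$ central) as a shortcut or fallback, which the paper avoids: it would indeed yield the stated presentation directly from the quadratic parts of the eight relations (and your reading of those quadratic parts, including $\alpha_4^2=0$ and $\alpha_3^2+\alpha_8^2=0$, is right), but the explicit monomial-to-dual-basis correspondence asserted in the first half of the proposition — which the later computation of $\on{Im}(\pi^{\#})$ actually uses — still requires the chain-level induction, so the structure theorem can replace the derivation of the relations but not all of your Step 3.
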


\begin{proof}
See Section~\ref{Appendix} in the appendix.
\end{proof}

We now look into the minimal free resolution of $\cc$ as an $R$-module. As we mentioned before, $R$ is not a complete intersection and thus we cannot determine the whole minimal resolution. We can however compute the first two steps, which will be sufficient for our purpose. We can verify that the ideal defining $R$ as a quotient of $\cc[E,F,H,\omega]$ is minimally generated by the homogeneous elements $r_1,r_2,r_5, \dots, r_{11}$, meaning that there is no polynomial $p \in \cc[x_1, \dots, x_4][y_1, \dots, y_8]$ without constant term such that $r_i=p(r_1,r_2, r_5, \dots, \widehat{r_i}, \dots, r_{11})$.

The $R$-algebra associated to the $R$-module $\cc$ is obtained by killing cycles (\cite[Section 2]{Tate}). The first complex $X_0$ is concentrated in degree $0$ and is $R$ itself. The maximal ideal of $R$ has 4 minimal generators, so we adjoin $4$ variables $T_1, \dots, T_4$ of degree $1$ such that $\partial_1(T_i)=x_i$, and thus the space of degree $1$ in the second complex $X_1$ is $P_1=\bigoplus_{i=1}^4RT_i$.

\begin{lemma}
The $\sum_{i=1}^4 c_{j, i} T_i$ for $1 \leq j \leq 11$, $j \neq 3, 4$, generate $H_1(X_1)=\on{Ker}\partial_1/\on{Im}\partial_2$.
\end{lemma}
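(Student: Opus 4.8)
The plan is to recognise the complex $X_1=R\langle T_1,\dots,T_4\rangle$ (with $\partial_1 T_i=x_i$, $x_1=E,x_2=F,x_3=H,x_4=\omega$) as the Koszul complex $K_\bullet(E,F,H,\omega;R)$ on the four minimal generators of the maximal ideal, so that $H_1(X_1)=\on{Ker}\partial_1/\on{Im}\partial_2$ is the first Koszul homology of the sequence $(E,F,H,\omega)$ over $R$, with $\partial_2(T_lT_i)=x_lT_i-x_iT_l$. First I would observe that each $z_j:=\sum_{i=1}^4 c_{j,i}T_i$ is genuinely a $1$-cycle, since $\partial_1(z_j)=\sum_i c_{j,i}x_i=r_j=0$ in $R$; thus the task is to show that the classes $[z_j]$ with $j\neq 3,4$ generate the $R$-module $H_1(X_1)$.

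The main tool is the identification $H_1(X_1)\cong I/\mathfrak m I$, where $S=\cc[E,F,H,\omega]$, $\mathfrak m=(E,F,H,\omega)$, and $I=(r_1,\dots,r_{11})$ is the defining ideal of $R=S/I$. Because $(E,F,H,\omega)$ is a regular sequence in the polynomial ring $S$, the Koszul complex $K_\bullet(E,F,H,\omega;S)$ is a free $S$-resolution of $\cc$; tensoring with $R$ gives $H_1(X_1)=\on{Tor}_1^S(\cc,R)$, and the long exact sequence attached to $0\to I\to S\to R\to 0$, together with $I\subseteq\mathfrak m$, yields $\on{Tor}_1^S(\cc,R)\cong I/\mathfrak m I$. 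Under this isomorphism $z_j$ is carried to the class of $r_j=\sum_i c_{j,i}x_i$ in $I/\mathfrak m I$. I would also record the concrete boundary identity that makes the module structure transparent: for every $l$, $\partial_2\big(\sum_i c_{j,i}\,T_lT_i\big)=\bar x_l z_j-\bar r_j T_l=\bar x_l z_j$ (using $T_lT_l=0$ and $\bar r_j=0$), so $\mathfrak m\cdot[z_j]=0$ and $H_1(X_1)$ is annihilated by the maximal ideal of $R$; consequently ``generate as an $R$-module'' coincides with ``span as a $\cc$-vector space'' here.

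With the isomorphism in hand the lemma becomes a statement about $I/\mathfrak m I$: by graded Nakayama, a family of elements of $I$ generates $I$ as an ideal precisely when their images span $I/\mathfrak m I$. It has already been established immediately before the lemma that $r_1,r_2,r_5,\dots,r_{11}$ (that is, the $r_j$ with $j\neq 3,4$) minimally generate $I$; hence their classes span $I/\mathfrak m I$, and so the cycles $z_j$ with $j\neq 3,4$ generate $H_1(X_1)$. For completeness I would exhibit why $z_3$ and $z_4$ may be dropped, by showing $r_3,r_4\in\mathfrak m I$ explicitly: from $r_7=\omega^pH$ and $r_8=H^2-C_p\omega^{2p-1}$ one gets $r_3=H^3=H\,r_8+C_p\omega^{p-1}r_7$ and $r_4=\omega^{3p-1}=C_p^{-1}(H\,r_7-\omega^p r_8)$, both with coefficients in $\mathfrak m$ (here $p\geq 2$, so $\omega^{p-1}\in\mathfrak m$), whence $[z_3]=[z_4]=0$ in $H_1(X_1)$.

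If one prefers to avoid $\on{Tor}$, the same conclusion follows by a direct argument: given a cycle $\zeta=\sum_i \bar a_i T_i$ with $\sum_i a_i x_i\in I$, write $\sum_i a_i x_i=\sum_j b_j r_j$; then $\zeta-\sum_j \bar b_j z_j$ already has vanishing $\partial_1$ over $S$, hence is a Koszul boundary over $S$ by regularity of $(E,F,H,\omega)$, and a fortiori a boundary over $R$, so the $z_j$ generate $H_1(X_1)$, and the boundary identity above together with $r_3,r_4\in\mathfrak m I$ removes $z_3,z_4$. The routine parts are the Koszul/regular-sequence machinery and the bookkeeping that tracks $z_j\mapsto[r_j]$; the only genuine input is the minimal-generation fact for $I$, which I expect to be the real content and which is supplied by the verification preceding the lemma.
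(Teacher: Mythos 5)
Your proposal is correct. The paper's own proof is precisely the hands-on version of your ``if one prefers to avoid $\on{Tor}$'' paragraph: it takes an arbitrary cycle $z=\sum z_iT_i$, expands $\sum z_ix_i$ as a polynomial in the $r_j$'s, absorbs the higher-order terms into the coefficients so that $\sum_i\bigl(z_i-\sum_ja_jc_{j,i}\bigr)x_i=0$ holds in $\cc[x_1,\dots,x_4]$, and then invokes the Koszul resolution of $\cc$ over the polynomial ring (i.e., regularity of $(x_1,\dots,x_4)$) to conclude that this difference lies in $\sum R(x_iT_j-x_jT_i)=\on{Im}\partial_2$; it never mentions $\on{Tor}$ or $I/\mathfrak{m}I$. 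Your primary route --- identifying $H_1(X_1)$ with $\on{Tor}_1^S(\cc,R)\cong I/\mathfrak{m}I$, observing that the connecting map sends $z_j$ to $[r_j]$, and then quoting graded Nakayama together with the generation statement established just before the lemma --- is a genuinely more conceptual repackaging resting on the same two inputs (regularity of $(E,F,H,\omega)$ in $S$ and generation of $I$ by the $r_j$ with $j\neq 3,4$). It buys two things the paper leaves implicit: it makes transparent that the class $[z_j]$ in $H_1$ is independent of the choice of the coefficients $c_{j,i}$, and it explains structurally why $j=3,4$ may be dropped, your identities $r_3=Hr_8+C_p\omega^{p-1}r_7$ and $r_4=C_p^{-1}(Hr_7-\omega^p r_8)$ exhibiting $r_3,r_4\in\mathfrak{m}I$, which is exactly the redundancy encoded in the paper's minimality claim. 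The paper's computation, by contrast, stays entirely inside the Tate complex and needs no homological machinery beyond the Koszul resolution of $\cc$ over $S$; your boundary identity $\partial_2\bigl(\sum_i c_{j,i}T_lT_i\bigr)=x_lz_j$ is also a useful addition, as it shows $\mathfrak{m}\cdot[z_j]=0$ and justifies passing between $R$-module generation and $\cc$-linear spanning.
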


\begin{proof}
Let $z=\sum_{i=1}^4 z_i T_i$ with $z_i \in \cc[x_1,\dots, x_4]$. If $z \in \on{Ker}\partial_1$, then $\sum_{i=1}^4 z_i x_i=p(r_1, r_2, r_5, \dots, r_{11})$ in $\cc[x_1,\dots, x_4]$ with $p$ a polynomial without constant term. We write $p(r_1, \dots, r_{11})=\sum_{j=1}^{11} a_j r_j+\sum_{k, l}a_{k, l}r_k r_l+\sum_{k, l, m}a_{k, l, m}r_k r_l r_m+ \dots $ where the $a_i$, $a_{k, l}, \dots$ are in $\cc[x_1,\dots, x_4]$. By regrouping the terms we obtain:
\[
\sum_{i=1}^4 \big(z_i -\sum_{j=1}^{11}a_j c_{j,i}-\sum_{k, l}a_{k, l}r_k c_{l, i}-\sum_{k, l, m}a_{k, l, m}r_k r_l c_{m, i}+\dots \big)x_i=0
\]
in $\cc[x_1, \dots, x_4]$. We see that we can reduce the $z_i$ modulo $(r_1, \dots, r_{11})$ so that $p$ has only monomials of degree $1$. So after modding the $z_i$'s we get:
\[
\sum_{i=1}^4 (z_i -\sum_{j=1}^{11}a_j c_{j,i})x_i=0
\]
in $\cc[x_1, \dots, x_4]$. Furthermore, we know that the projective resolution of $\cc$ as a $\cc[x_1, \dots, x_4]$-module is given by the Koszul complex with generators $t_1, \dots, t_4$, so in order to have $\partial_1(\sum_{j=1}^4 p_ j t_j)=p_1x_1+\dots+p_4 x_4=0$ for polynomials $p_i$ in $\cc[x_1, \dots, x_4]$, we need $\sum_{j=1}^4 p_j t_j \in \on{Ker}\partial_1=\on{Im}\partial_2$, which is generated by the $x_i t_j-x_j t_i$. It follows that in the complex $X_1$, we have:
\[
\sum_{i=1}^4 (z_i -\sum_{j=1}^{11}a_j c_{j,i})T_i \in \sum_{1 \leq i \neq j \leq 4}R(x_i T_j-x_j T_i).
\]
We have found that $z \in \on{Ker}\partial_1$ if and only if there exist polynomials $a_j$ such that in $P_1/\on{Im}\partial_2$ we have $z=\sum_{j=1}^{11} a_j (\sum_{i=1}^4 c_{j, i}T_i)$. The assertion of the lemma follows.
\end{proof}

We adjoin $9$ variables $S_1,S_2, S_5 \dots, S_{11}$ of degree 2 such that for any $1 \leq j \leq 11$, $j \neq 3,4$, $\partial_2(S_j)=\sum_{i=1}^4 c_{j, i} T_i$, and we obtain $P_2= \bigoplus_{\begin{subarray}{l} j=1 \\ j \neq 3,4 \end{subarray}}^{11} RS_j \oplus \bigoplus_{1 \leq i < j \leq 4} R T_i T_j$. So the beginning of the minimal resolution of the $R$-module $\cc$ is:
 \begin{center}
  \begin{tikzpicture}[scale=0.9,  transform shape]
  \tikzset{>=stealth}
  
\node (1) at ( -8,0){$\dots$};
\node (2) at ( -4,0){$\displaystyle \bigoplus_{j=1}^{11} RS_j \oplus \bigoplus_{1 \leq i < j \leq 4} R T_i T_j$};
\node (3) at ( 0,0){$\displaystyle \bigoplus_{i=1}^4 RT_i$};
\node (4) at ( 2,0) {$R$};
\node (5) at ( 4,0){$\cc$};
\node (6) at ( 6,0){0};
\node (7) at ( -5.6,-0.7){{\scriptsize $j \neq 3,4$}};

\draw [decoration={markings,mark=at position 1 with
    {\arrow[scale=1.2,>=stealth]{>}}},postaction={decorate}] (1) --  (2) node[midway, above] {$\partial_3$};
\draw [decoration={markings,mark=at position 1 with
    {\arrow[scale=1.2,>=stealth]{>}}},postaction={decorate}] (2)  --  (3) node[midway, above] {$\partial_2$};
\draw [decoration={markings,mark=at position 1 with
    {\arrow[scale=1.2,>=stealth]{>}}},postaction={decorate}] (3)  --  (4) node[midway, above] {$\partial_1$};
\draw [decoration={markings,mark=at position 1 with
    {\arrow[scale=1.2,>=stealth]{>}}},postaction={decorate}] (4)  --  (5) node[midway, above] {$\epsilon$};
\draw [decoration={markings,mark=at position 1 with
    {\arrow[scale=1.2,>=stealth]{>}}},postaction={decorate}] (5)  --  (6) ;
\end{tikzpicture}
 \end{center}
where we keep the notation $\epsilon$ for the augmentation map. As $R$ is no longer a complete intersection, we would need to add variables of higher degrees in order to obtain the minimal resolution. However for our purpose, we do not need to know these other variables. 

We call $\psi$ the lift between the minimal resolutions of $\cc$ as an $R$-module and as an $\widetilde{R}$-module:
 \begin{center}
  \begin{tikzpicture}[scale=0.9,  transform shape]
  \tikzset{>=stealth}
  
\node (1) at ( -8,0){$\dots$};
\node (2) at ( -4,0){$\displaystyle \bigoplus_{j=1}^8 \widetilde{R}s_j \oplus \bigoplus_{1 \leq i < j \leq 8} \widetilde{R} t_i t_j$};
\node (3) at ( 1,0){$\displaystyle \bigoplus_{i=1}^{8} \widetilde{R}t_i$};
\node (4) at ( 3,0) {$\widetilde{R}$};
\node (5) at ( 5,0){$\cc$};
\node (6) at ( 7,0){0};
  
\node (7) at ( -8,-2.5){$\dots$};
\node (8) at ( -4,-2.5){$\displaystyle \bigoplus_{j=1}^{11} RS_j \oplus \bigoplus_{1 \leq i < j \leq 4} R T_i T_j$};
\node (9) at ( 1,-2.5){$\displaystyle \bigoplus_{i=1}^4 RT_i$};
\node (10) at ( 3,-2.5) {$R$};
\node (11) at ( 5,-2.5){$\cc$};
\node (12) at ( 7,-2.5){0};
\node (13) at ( -5.6,-3.2){{\scriptsize $j \neq 3,4$}};

\draw [decoration={markings,mark=at position 1 with
    {\arrow[scale=1.2,>=stealth]{>}}},postaction={decorate}] (1) --  (2) node[midway, above] {$\partial_3$};
\draw [decoration={markings,mark=at position 1 with
    {\arrow[scale=1.2,>=stealth]{>}}},postaction={decorate}] (2)  --  (3) node[midway, above] {$\partial_2$};
\draw [decoration={markings,mark=at position 1 with
    {\arrow[scale=1.2,>=stealth]{>}}},postaction={decorate}] (3)  --  (4) node[midway, above] {$\partial_1$};
\draw [decoration={markings,mark=at position 1 with
    {\arrow[scale=1.2,>=stealth]{>}}},postaction={decorate}] (4)  --  (5) node[midway, above] {$\epsilon$};
\draw [decoration={markings,mark=at position 1 with
    {\arrow[scale=1.2,>=stealth]{>}}},postaction={decorate}] (5)  --  (6);
    
\draw [decoration={markings,mark=at position 1 with
    {\arrow[scale=1.2,>=stealth]{>}}},postaction={decorate}] (7) --  (8) node[midway, above] {$\partial_3$};
\draw [decoration={markings,mark=at position 1 with
    {\arrow[scale=1.2,>=stealth]{>}}},postaction={decorate}] (8)  --  (9) node[midway, above] {$\partial_2$};
\draw [decoration={markings,mark=at position 1 with
    {\arrow[scale=1.2,>=stealth]{>}}},postaction={decorate}] (9)  --  (10) node[midway, above] {$\partial_1$};
\draw [decoration={markings,mark=at position 1 with
    {\arrow[scale=1.2,>=stealth]{>}}},postaction={decorate}] (10)  --  (11) node[midway, above] {$\epsilon$};
\draw [decoration={markings,mark=at position 1 with
    {\arrow[scale=1.2,>=stealth]{>}}},postaction={decorate}] (11)  --  (12);  
    
\draw [decoration={markings,mark=at position 1 with
    {\arrow[scale=1.2,>=stealth]{>}}},postaction={decorate}] (2) --  (8) node[midway, right] {$\psi_2$};
\draw [decoration={markings,mark=at position 1 with
    {\arrow[scale=1.2,>=stealth]{>}}},postaction={decorate}] (3)  --  (9) node[midway, right] {$\psi_1$};
\draw [decoration={markings,mark=at position 1 with
    {\arrow[scale=1.2,>=stealth]{>}}},postaction={decorate}] (4)  --  (10) node[midway, right] {$\pi$};
\draw [decoration={markings,mark=at position 1 with
    {\arrow[scale=1.2,>=stealth]{>}}},postaction={decorate}] (5)  --  (11) node[midway, right] {$\on{id}$};
\end{tikzpicture}
 \end{center}
where the arrows going down are homomorphisms of $\widetilde{R}$-modules. Indeed, there exists an action of $\widetilde{R}$ on the minimal resolution of $\cc$ as an $R$-module induced by the homomorphism $\pi: \widetilde{R} \longrightarrow R$. We call $P_i$ the space of degree $i$ of the bottom resolution, the space of degree $i$ of the top resolution remains $Q_i$,  and we keep the same notation for the differentials. In order to make the diagram commute, it is possible to define $\psi_1$ and $\psi_2$ as:
\begin{align*}
\resizebox{\hsize}{!}{$
\setlength{\arraycolsep}{0.3cm}\begin{array}{cc}
\setlength{\arraycolsep}{6pt}\begin{array}[t]{cccl}
\psi_1: & Q_1& \longrightarrow & P_1 \\
           & t_i & \longmapsto & \left\{\begin{array}{cl}
        T_i  & \text{if }  1 \leq  i \leq 4,\\
        0  & \text{if }  5 \leq i \leq 8,
        \end{array}\right.
\end{array}   &  \setlength{\arraycolsep}{6pt}\begin{array}[t]{cccl}
\psi_2: & Q_2& \longrightarrow & P_2 \\
           & t_i t_j & \longmapsto & \left\{\begin{array}{cl}
        T_i T_j & \text{ if } 1 \leq i, j \leq 4,\\
        0 & \text{ otherwise},
        \end{array}\right. \\
        & s_j & \longmapsto &  \left\{\begin{array}{cl}
        S_j  & \text{if }  j \neq 3,4,\\
        H S_8 & \text{if }  j=3, \\
        -\frac{1}{C_p} \omega^p S_8 & \text{if }  j=4.
        \end{array}\right.
\end{array}
\end{array}$}
\end{align*}

The next proposition gives an explicit realisation of $\psi_n$ for any $n \in \mathbb{N}^*$.

\begin{proposition}\label{LiftPsi}
Let $m \in \mathbb{N}^*$ and set $t_1^{a_1}\dots t_{8}^{a_8}s_1^{b_1}\dots s_8^{b_8} \in Q_{m}$. Then:
\begin{align*}
\psi_m(t_1^{a_1}\dots t_{8}^{a_8}s_1^{b_1}\dots s_8^{b_8})=T_1^{a_1} \dots T_4^{a_4}0^{a_5} \dots 0^{a_8}S_1^{b_1}S_2^{b_2}(HS_8)^{b_3}(-\frac{1}{C_p} \omega^p S_8)^{b_4}S_5^{b_5}\dots S_8^{b_8}.
\end{align*}
\end{proposition}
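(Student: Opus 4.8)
The plan is to prove the formula by induction on the homological degree $m$, after first reducing the statement to a multiplicativity property of $\psi$. Recall that Tate's acyclic closure endows $Q$ with the structure of a free graded-commutative DG $\widetilde{R}$-algebra on the odd (degree $1$) generators $t_1,\dots,t_8$ and the even (degree $2$) generators $s_1,\dots,s_8$, and endows the minimal resolution $P_\bullet$ of $\cc$ over $R$ with a DG $R$-algebra structure, which we view as a DG $\widetilde{R}$-algebra via $\pi$. Since $Q$ is free as a graded-commutative algebra on the $t_i,s_j$, any assignment of these generators to elements of $P$ of the same homological degree extends uniquely to a homomorphism of graded $\widetilde{R}$-algebras $\psi\colon Q\to P$; the right-hand side of the asserted formula is exactly this multiplicative extension of the values $\psi(t_i)=T_i$ $(1\le i\le 4)$, $\psi(t_i)=0$ $(5\le i\le 8)$, $\psi(s_1)=S_1$, $\psi(s_2)=S_2$, $\psi(s_3)=HS_8$, $\psi(s_4)=-\frac{1}{C_p}\omega^pS_8$, and $\psi(s_j)=S_j$ $(5\le j\le 8)$. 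The factor $0^{a_i}$ for $5\le i\le 8$ simply records that any monomial divisible by one of $t_5,\dots,t_8$ is sent to $0$. Because $P$ is itself graded-commutative, with $T_iT_j=-T_jT_i$, $T_i^2=0$ and the even generators central, this extension is well defined; note that these $\psi(s_j)$ agree with the choices of $\psi_1,\psi_2$ fixed above.

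It then remains to show that this $\psi$ is a chain map, where I write $\partial$ for the differential of either complex. The base of the induction consists of the generator identities $\partial\psi(g)=\psi(\partial g)$ for $g\in\{t_i,s_j\}$. For the odd generators these are immediate from $\partial T_i=x_i$ $(i\le 4)$ and $\pi(x_i)=0$ $(i\ge 5)$. For the even generators one uses $\partial S_j=\sum_i c_{j,i}T_i$ together with the defining relations of $R$. The cases $s_1,s_2,s_5,s_6,s_7,s_8$ are direct; the delicate cases are $s_3$ and $s_4$, for which $S_3,S_4$ do not occur in $P$ (the relations $r_3,r_4$ being non-minimal). Here one computes
\[
\partial(HS_8)=H^2T_3-C_pH\omega^{2p-2}T_4,\qquad \partial\bigl(-\tfrac{1}{C_p}\omega^pS_8\bigr)=-\tfrac{1}{C_p}\omega^pHT_3+\omega^{3p-2}T_4,
\]
and invokes the relation $\omega^pH=0$ in $R$: since $p\ge 2$ we have $\omega^{2p-2}=\omega^p\omega^{p-2}$, so $H\omega^{2p-2}=\omega^{p-2}(\omega^pH)=0$, and likewise $\omega^pH=0$ kills the spurious terms, leaving $\partial(HS_8)=H^2T_3=\psi_1(\partial s_3)$ and $\partial(-\frac{1}{C_p}\omega^pS_8)=\omega^{3p-2}T_4=\psi_1(\partial s_4)$.

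For the inductive step I would factor a degree-$m$ monomial $u$ as $u=g\,u'$ with $g$ a single generator and $u'\in Q_{m-\deg g}$, so that $\partial u=(\partial g)\,u'\pm g\,(\partial u')$. Applying the Leibniz rule in $P$ and the multiplicativity of $\psi$ gives
\[
\partial\psi(u)=\bigl(\partial\psi(g)\bigr)\psi(u')\pm\psi(g)\bigl(\partial\psi(u')\bigr)=\psi(\partial g)\,\psi(u')\pm\psi(g)\,\psi(\partial u'),
\]
where the second equality uses the generator identities and the inductive hypothesis $\partial\psi(u')=\psi(\partial u')$. Collecting terms via multiplicativity once more yields $\psi\bigl((\partial g)u'\pm g\,\partial u'\bigr)=\psi(\partial u)$, completing the induction. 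Writing out $\psi(u)=\psi(t_1)^{a_1}\cdots\psi(t_8)^{a_8}\psi(s_1)^{b_1}\cdots\psi(s_8)^{b_8}$ in the fixed order then gives precisely the stated expression.

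I expect the main obstacle to be the bookkeeping around $s_3$ and $s_4$. Because $r_3=H^3$ and $r_4=\omega^{3p-1}$ are not minimal generators of the defining ideal of $R$, the degree-$2$ variables $S_3,S_4$ are unavailable in $P$, and one must show both that the replacements $\psi(s_3)=HS_8$ and $\psi(s_4)=-\frac{1}{C_p}\omega^pS_8$ are consistent with $\partial$ and that they then propagate correctly through the multiplicative extension. This is exactly where the specific relations of $\on{gr}A(\mathcal{W}(p))$—above all $\omega^pH=0$—carry the argument, and it is the only place where the non-complete-intersection nature of $R$ genuinely intervenes.
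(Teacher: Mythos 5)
Your proposal is correct and follows essentially the same route as the paper: both arguments fix the same generator images (your $\psi(t_i),\psi(s_j)$ are exactly the paper's $\psi_1,\psi_2$) and propagate them to all monomials by an induction whose substance is the Leibniz rule, the only real content being the chain-map identities on generators. The one point where you go beyond the paper's write-up is in explicitly verifying the delicate base cases $\partial(HS_8)=H^2T_3$ and $\partial(-\tfrac{1}{C_p}\omega^pS_8)=\omega^{3p-2}T_4$ via the relation $\omega^pH=0$, a computation the paper absorbs into its unproved assertion that the stated $\psi_1,\psi_2$ make the diagram commute.
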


\begin{proof}
We write $\mathcal{P}(m)$ for the statement of the proposition for a fixed $m \in \mathbb{N}^*$. We have just seen that $\mathcal{P}(1)$ and $\mathcal{P}(2)$ are true. As with the previous proof, we will proceed by induction.

Assume $\mathcal{P}(m)$ is true for some $m \in \mathbb{N}^*$. We fix $t_1^{a_1}\dots t_{8}^{a_8}s_1^{b_1}\dots s_8^{b_8} \in Q_{m+1}$ and directly compute:
\begin{equation*}
\resizebox{\hsize}{!}{$
\begin{array}{l}
\psi_n \partial_{m+1}(t_1^{a_1}\dots t_8^{a_8}s_1^{b_1}\dots s_8^{b_8}) \\
 =\displaystyle  \sum_{i=1}^4 (-1)^{a_1+a_2+\dots +a_{i-1}} a_ix_iT_1^{a_1} \dots \widehat{T_i^{a_i}} \dots T_4^{a_4}0^{a_5} \dots 0^{a_8}S_1^{b_1}S_2^{b_2}(HS_8)^{b_3}\dots S_8^{b_8}  \\
+ (-1)^{a_1+\dots a_8} T_1^{a_1}\dots T_4^{a_4}0^{a_5} \dots 0^{a_8} \Bigg[ \displaystyle \sum_{\substack{j=1 \\ j \neq 3,4}}^8 b_j \partial_2(S_j)S_1^{b_1}S_2^{b_2}(HS_8)^{b_3}\dots S_j^{b_j-1} \dots S_8^{b_8}  \\
 +b_3 \partial_2(HS_8)S_1^{b_1}S_2^{b_2}(HS_8)^{b_3-1}\dots S_8^{b_8} + b_4 \partial_2(-\frac{1}{C_p} \omega^p S_8)S_1^{b_1}S_2^{b_2}(HS_8)^{b_3}(-\frac{1}{C_p} \omega^p S_8)^{b_4-1}\dots S_8^{b_8} \Bigg], \\
 = \partial_{m+1}(T_1^{a_1} \dots T_4^{a_4}0^{a_5} \dots 0^{a_8}S_1^{b_1}S_2^{b_2}(HS_8)^{b_3}(-\frac{1}{C_p} \omega^p S_8)^{b_4}S_5^{b_5}\dots S_8^{b_8}).
 \end{array}$}
\end{equation*}
Because all elements of the basis of $Q_{m+1}$ are of the form $t_1^{a_1}\dots t_8^{a_8}s_1^{b_1}\dots s_8^{b_8}$, we can define:
\begin{align*}
\resizebox{\hsize}{!}{$
\begin{array}[t]{cccc}
\psi_{n+1}: & Q_{m+1} & \longrightarrow & P_{m+1} \\
           & t_1^{a_1}\dots t_8^{a_8}s_1^{b_1}\dots s_8^{b_8} & \longmapsto & T_1^{a_1} \dots T_4^{a_4}0^{a_5} \dots 0^{a_8}S_1^{b_1}S_2^{b_2}(HS_8)^{b_3}(-\frac{1}{C_p} \omega^p S_8)^{b_4}S_5^{b_5}\dots S_8^{b_8}.
\end{array}$}
\end{align*}
This makes the diagram commute and so $\mathcal{P}(m+1)$ is true. As $\mathcal{P}(1)$ and $\mathcal{P}(2)$ are true, this concludes the proof.
\end{proof}

The induced homomorphism $\pi^{\#}: H^{*}(R) \longrightarrow H^{*}(\widetilde{R})$ is defined through the lift $\psi$ by $\pi^{\#}(f)=f \circ \psi$. As we have just found a description of $\psi$, we can compute the image of $\pi^{\#}$ for small degrees. For $1 \leq i \leq 4$ and $1 \leq j \leq 11$ ($j \neq 3,4$), let $\gamma_i$ and $\delta_j$ be the duals of $T_i$ and $S_j$. 

We know from the construction of the beginning of the free resolution $P$ that $\partial_1^*=\partial_2^*=0$. It follows that $H^1(R)=\text{Hom}_R(P_1,\cc)$, and $H^2(R)=\text{Ker}(\partial_3^*)$. We cannot express $\text{Ker}(\partial_3^*)$ explicitly because we do not know the variables we may need to add to $P_3$. However, we can prove the following lemma:

\begin{lemma}
The $\delta_j$'s defined above are elements of $H^2(R)$.
\end{lemma}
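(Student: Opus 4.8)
The plan is to verify that each $\delta_j \in \on{Hom}_R(P_2,\cc)$ is a cocycle, since that is exactly the content of the assertion $\delta_j \in H^2(R)$. As already noted, $\partial_1^\ast=\partial_2^\ast=0$, so $H^1(R)=\on{Hom}_R(P_1,\cc)$ and $H^2(R)=\ker(\partial_3^\ast)$; thus it suffices to show $\partial_3^\ast(\delta_j)=\delta_j\circ\partial_3=0$ for each $j$, and then $\delta_j\in H^2(R)$ is immediate.

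The single ingredient I would use is that $\cdots\to P_2\to P_1\to P_0=R\to\cc$ is a \emph{minimal} free resolution, so that every differential satisfies $\partial_n(P_n)\subseteq\mathfrak{m}P_{n-1}$, where $\mathfrak{m}=(E,F,H,\omega)$ is the nilpotent maximal ideal of the local algebra $R$. This is visible for the differentials written down explicitly: $\partial_1(T_i)=x_i\in\mathfrak{m}$; $\partial_2(T_iT_j)=x_iT_j-x_jT_i\in\mathfrak{m}P_1$; and $\partial_2(S_j)=\sum_i c_{j,i}T_i$ with each $c_{j,i}\in\mathfrak{m}$, because each relation $r_j$ has vanishing constant and linear part, forcing the constant term of every $c_{j,i}$ in the identity $r_j=\sum_i c_{j,i}x_i$ to be zero. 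The same inclusion is what I would need for $\partial_3$, and it is the point requiring care.

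Granting $\partial_3(P_3)\subseteq\mathfrak{m}P_2$, the cocycle condition follows at once: for $z\in P_3$ write $\partial_3(z)=\sum_k m_k b_k$ with $m_k\in\mathfrak{m}$ and $b_k$ running over the free basis of $P_2$, so that
\[
\delta_j(\partial_3(z))=\sum_k m_k\,\delta_j(b_k)=0,
\]
since $\delta_j(b_k)\in\cc$ and $\mathfrak{m}$ annihilates the trivial module $\cc=R/\mathfrak{m}$. Hence $\delta_j\circ\partial_3=0$ and $\delta_j\in\ker(\partial_3^\ast)=H^2(R)$.

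The main obstacle is justifying $\partial_3(P_3)\subseteq\mathfrak{m}P_2$ without an explicit description of $P_3$ and $\partial_3$ (which, as remarked, is unavailable because $R$ is not a complete intersection). I would resolve this by appealing to uniqueness and minimality rather than to formulas: the terms $P_0,P_1,P_2$ with the stated differentials form the beginning of the minimal free resolution of $\cc$ --- here one uses that $r_1,r_2,r_5,\dots,r_{11}$ minimally generate the defining ideal of $R$, so that the $S_j$ correspond to a minimal system of first syzygies --- and in a minimal resolution all differentials, including $\partial_3$, have entries in $\mathfrak{m}$. Equivalently, the dual complex $\on{Hom}_R(P_\bullet,\cc)$ has all differentials zero, so in fact \emph{every} element of $\on{Hom}_R(P_2,\cc)$, and in particular each $\delta_j$, is a cocycle.
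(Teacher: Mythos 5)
Your reduction is correct and ends up resting on exactly the same input as the paper, but you package it differently. The paper does not invoke minimality of the resolution at stage $3$; instead it argues directly that $\delta_j$ vanishes on $\on{Im}(\partial_3)=\on{Ker}(\partial_2)$ by showing that no element of $\on{Ker}(\partial_2)$ can have a unit coefficient on $S_j$: assuming such an element exists, an explicit computation with the coefficients $c_{j,i}$ and the Koszul syzygies produces an identity $(1+\widetilde{a_j})r_j=p(r_1,\dots,\widehat{r_j},\dots,r_{11})$, contradicting the minimal generation of the defining ideal. Your route is to observe that this vanishing would follow from $\partial_3(P_3)=\on{Ker}(\partial_2)\subseteq\mathfrak{m}P_2$, i.e.\ from $P_2\twoheadrightarrow\on{Ker}(\partial_1)$ being a minimal cover, and then to assert that minimality holds because the $r_j$'s minimally generate the ideal, so the $S_j$'s ``correspond to a minimal system of first syzygies.'' That assertion is true (it is the classical description of $\on{Tor}_2^R(\cc,\cc)$ over $R=S/I$ with $I\subseteq\mathfrak{m}_S^2$: Koszul relations plus a minimal generating set of $I$), but note that it is precisely the statement the paper's proof establishes by hand --- the preceding lemma only gives that the $\partial_2(S_j)$'s \emph{generate} $H_1$, not that they do so minimally, so the implication ``minimal generators of $I$ $\Rightarrow$ no syzygy has a unit $S_j$-coefficient'' is the entire content of the lemma. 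Your write-up therefore either needs a citation for that standard fact or the explicit contradiction argument the paper supplies; with that caveat, the proof is sound, and it has the mild advantage of isolating the conceptual reason (minimality of the resolution through degree $2$) and of yielding the slightly stronger conclusion that \emph{every} element of $\on{Hom}_R(P_2,\cc)$ is a cocycle, not just the $\delta_j$'s.
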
 

\begin{proof}
We need to show that for all $1 \leq j \leq 11$ ($j \neq 3,4$), we have $0=\partial_3^*(\delta_j)=\delta_j \circ \partial_3$. But when the complex is made acyclic, we know that $\on{Im}(\partial_3)=\text{Ker}(\partial_2)$. So $\delta_j \in H^2(R)$ if and only if $\delta_j$ is zero on $\text{Ker}(\partial_2)$. As $\delta_j$ is the dual of $S_j$, this is equivalent to saying that no element of $\text{Ker}(\partial_2)$ contains a non zero scalar multiple of $S_j$.

Let us assume that for $1 \leq l \leq 11$ ($l \neq 3,4$), $1 \leq i < m \leq 4$ there exist $a_l, a_{i, m} \in \cc[x_1, \dots, x_4]$ such that $a_j S_j+\sum_{l \neq j}a_l S_l+\sum_{i <m}a_{i, m}T_iT_m$ is in $\text{Ker}(\partial_2)$, and with $a_j \equiv 1 \text{ mod } (x_1, \dots, x_4)$. It follows that:
\begin{align*}
\renewcommand{\arraystretch}{2}
\begin{array}{cl}
0 &\displaystyle  =\sum_{i=1}^4 a_j c_{j, i}T_i+\sum_{l \neq j}a_l (\sum_{i=1}^4c_{l, i}T_i)+\sum_{i <m}a_{i, m}(x_iT_m-x_mT_i), \\
 & \displaystyle = \sum_{i=1}^4 \big(a_j c_{j, i}+\sum_{l \neq j}a_l c_{l, i}+\sum_{m<i}x_m a_{m, i}-\sum_{i<m}x_m a_{i, m} \big)T_i.
 \end{array}
\end{align*}
As the $T_i$'s are free, we get that for all $1 \leq i \leq 4$:
\begin{align*}
a_j c_{j, i}+\sum_{l \neq j}a_l c_{l, i}+\sum_{m<i}x_m a_{m, i}-\sum_{i<m}x_m a_{i, m}=0 \text{ in } R.
\end{align*}
Thus for all $1 \leq i \leq 4$, there exists a polynomial $p_i \in \cc[x_1, \dots, x_4][y_1, \dots, y_9]$ without constant term such that:
\begin{align*}
a_j c_{j, i}+\sum_{l \neq j}a_l c_{l, i}+\sum_{m<i}x_m a_{m, i}-\sum_{i<m}x_m a_{i, m}=p_i(r_1,r_2, r_5 \dots, r_{11}) \text{ in } \cc[x_1, \dots, x_4].
\end{align*}
By multiplying each expression by $x_i$, and then summing over $i$, the last two terms cancel each other and we get 
\begin{align*}
a_j r_j+\sum_{l \neq j}a_l r_l=\sum_{i=1}^n x_i p_i(r_1, r_2, r_5, \dots, r_{11}).
\end{align*}
The coefficient of $r_j$ in the right hand side is clearly in $(x_1, \dots, x_4)$. We pass it to the left side and regroup all the $r_l$ with $l \neq j$ to the right hand side and obtain:
\begin{align*}
(1+\widetilde{a_j})r_j=p(r_1, r_2, r_5, \dots, \widehat{r_j}, \dots, r_{11}).
\end{align*}
with $\widetilde{a_j} \in(x_1, \dots, x_4)$ and $p \in \cc[x_1, \dots, x_4][y_1, \dots, y_8]$. But as the $r_l$'s are homogeneous, this implies that $r_j=\widetilde{p}(r_1, r_2, r_5, \dots, \widehat{r_j}, \dots, r_{11})$ for some polynomial $\widetilde{p}$ without a constant term, contradicting the minimality of the $r_l$'s as generators of $(r_1, \dots, r_{11})$. Therefore no element of $\text{Ker}(\partial_2)$ is of the assumed form.
\end{proof}

We can directly compute:
\begin{align*}
\setlength{\arraycolsep}{1cm}
\begin{array}{cc}
\pi^{\#}(\gamma_i)=\gamma_i \circ \psi_1=\alpha_i,  \ \forall \ 1 \leq i \leq 4, &
\pi^{\#}(\delta_i) = \left\{\setlength{\arraycolsep}{6pt}\begin{array}{cl}
        \alpha_i^2  & \text{ if }  i=1,2 ,\\
        -\alpha_i^2  & \text{ if }  5 \leq i \leq 8, \\
        0  & \text{ if }  9 \leq i \leq 11. \\
        \end{array}\right.
\end{array}
\end{align*}
We do not know the basis of $P_3$ and $P_4$ so we cannot determine explicitly the commutation relations between $\gamma_i$ and $\delta_j$. However, we can obtain the following relations:
\begin{itemize}
\item $\gamma_j \gamma_i \equiv -\gamma_i \gamma_j \text{ mod }\text{Ker}(\pi^{\#})$ for $1 \leq i < j \leq 4$,
\item $\gamma_4^2=0$, 
\item $\gamma_i \delta_j \equiv \delta_j \gamma_j \text{ mod }\text{Ker}(\pi^{\#})$ for $1 \leq i \leq 4, \ 1 \leq  j \leq 11, \ j \neq 3,4$, 
\item $\delta_j \delta_i \equiv \delta_i \delta_j \text{ mod }\text{Ker}(\pi^{\#})$ for $1 \leq i < j \leq 11, \  i, j \neq 3,4$.
\end{itemize}
We can compute that $\gamma_1^2=\delta_1, \gamma_2^2=\delta_2, \gamma_3^2=\delta_8$. Moreover, modulo $\text{Ker}(\pi^{\#})$, $\gamma_i \gamma_j$ is the dual of $T_i T_j$ in $P_2$.

It is known that the induced homomorphism $\pi^{\#}$ is a homomorphism of algebras preserving the Yoneda product (it is easily seen when the product is done by splicing $n$-fold exact sequences), thus 
\begin{align*}
\cc[\alpha_5^2, \alpha_6^2, \alpha_7^2]  \otimes \left(\cc\langle \alpha_1, \dots, \alpha_4 \rangle /\left( \begin{array}{c} \alpha_i \alpha_j+\alpha_j \alpha_i \ \forall i \neq j, \\ \alpha_4^2 \end{array}\right) \right) \subseteq \on{Im}(\pi^{\#}). 
\end{align*}
We intend to prove that these two spaces are in fact equal.

Let $a_1, \dots ,a_4 \in \{0,1\}$, $b_1,b_2, b_5,  \dots, b_8 \in \mathbb{N}$ such that $a_1+ \dots +a_4+2( b_1+ \dots+ b_8)=m$. We define:
\begin{align*}
\begin{array}[t]{cccl}
(T_1^{a_1}\dots T_4^{a_4}S_1^{b_1}\dots S_8^{b_8})^*: & P_m & \longrightarrow & \cc \\
           & T_1^{a_1}\dots T_4^{a_4}S_1^{b_1}\dots S_8^{b_8} & \longmapsto & 1, \\
           & \text{other} & \longmapsto & 0.
\end{array}
\end{align*}

\begin{proposition}\label{MonomialContainingTheDual}
For any $m \in \mathbb{N}^*$, let $T_1^{a_1}\dots T_4^{a_4}S_1^{b_1}\dots S_8^{b_8} \in P_m$. There exists $z \in \text{Ker}(\pi^{\#})$ such that the corresponding basis element of $H^{m}(R)$ is a non-trivial multiple of $\gamma_1^{a_1}\dots \gamma_4^{a_4}\delta_1^{b_1}\dots \delta_8^{b_8}+z$.
\end{proposition}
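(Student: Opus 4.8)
The plan is to transport the whole question into $H^{*}(\widetilde{R})$ by means of the algebra homomorphism $\pi^{\#}\colon H^{*}(R)\to H^{*}(\widetilde{R})$, since on the $\widetilde{R}$ side both the Tate resolution $Q$ and the comparison map $\psi$ are completely understood through Propositions~\ref{LiftPsi} and~\ref{ExtRtilde}. Because the acyclic closure obtained by killing cycles is a minimal resolution, every differential $\partial_{n}$ lands in the maximal ideal, so all dual differentials vanish and $H^{m}(R)=\on{Hom}_{R}(P_{m},\cc)$ has the duals of the monomial basis of $P_{m}$ as a basis; in particular $(T_{1}^{a_{1}}\cdots T_{4}^{a_{4}}S_{1}^{b_{1}}\cdots S_{8}^{b_{8}})^{*}$ is a genuine element of $H^{m}(R)$ (here $b_{3}=b_{4}=0$, since $S_{3},S_{4}$ were never adjoined). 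I would then show that this dual basis element and the Yoneda monomial $\gamma_{1}^{a_{1}}\cdots\gamma_{4}^{a_{4}}\delta_{1}^{b_{1}}\cdots\delta_{8}^{b_{8}}$ have the same image under $\pi^{\#}$ up to a nonzero scalar; their suitably normalised difference is then the required $z\in\on{Ker}\pi^{\#}$.

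First I would compute $\pi^{\#}$ on the dual basis element via $\pi^{\#}(f)=f\circ\psi_{m}$ and the explicit formula of Proposition~\ref{LiftPsi}. Evaluating on a basis monomial $q=t_{1}^{c_{1}}\cdots t_{8}^{c_{8}}s_{1}^{d_{1}}\cdots s_{8}^{d_{8}}$ of $Q_{m}$, the image $\psi_{m}(q)$ vanishes as soon as one of $c_{5},\dots,c_{8}$ is positive, and otherwise equals $H^{d_{3}}\bigl(-\tfrac{1}{C_{p}}\omega^{p}\bigr)^{d_{4}}$ times the monomial $T_{1}^{c_{1}}\cdots T_{4}^{c_{4}}S_{1}^{d_{1}}S_{2}^{d_{2}}S_{5}^{d_{5}}S_{6}^{d_{6}}S_{7}^{d_{7}}S_{8}^{d_{3}+d_{4}+d_{8}}$. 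The crucial point is that the dual map is $R$-linear into the trivial module, so the scalar factor passes through the augmentation $\epsilon$, and since $\epsilon(H)=\epsilon(\omega)=0$ it survives only when $d_{3}=d_{4}=0$. Matching the remaining exponents then forces $q=t_{1}^{a_{1}}\cdots t_{4}^{a_{4}}s_{1}^{b_{1}}s_{2}^{b_{2}}s_{5}^{b_{5}}\cdots s_{8}^{b_{8}}$, with value $1$. Hence $\pi^{\#}\bigl((T_{1}^{a_{1}}\cdots S_{8}^{b_{8}})^{*}\bigr)$ is exactly the dual in $H^{m}(\widetilde{R})$ of that single $Q$-monomial, which by Proposition~\ref{ExtRtilde} is a nonzero multiple $\kappa\,u$ of $u:=\alpha_{1}^{a_{1}}\cdots\alpha_{4}^{a_{4}}(\alpha_{1}^{2})^{b_{1}}(\alpha_{2}^{2})^{b_{2}}(-\alpha_{5}^{2})^{b_{5}}\cdots(-\alpha_{8}^{2})^{b_{8}}$.

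Second, since $\pi^{\#}$ is a homomorphism of graded algebras, I would evaluate it on the Yoneda monomial directly from the values $\pi^{\#}(\gamma_{i})=\alpha_{i}$, $\pi^{\#}(\delta_{1})=\alpha_{1}^{2}$, $\pi^{\#}(\delta_{2})=\alpha_{2}^{2}$, and $\pi^{\#}(\delta_{j})=-\alpha_{j}^{2}$ for $5\le j\le 8$ recorded just before the statement. This gives $\pi^{\#}(\gamma_{1}^{a_{1}}\cdots\gamma_{4}^{a_{4}}\delta_{1}^{b_{1}}\cdots\delta_{8}^{b_{8}})=u$, literally the same $u$ (graded-commutativity makes the ordered product unambiguous, the even $\delta$'s being central and $a_{i}\in\{0,1\}$). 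Therefore $\pi^{\#}\bigl(\kappa^{-1}(T_{1}^{a_{1}}\cdots S_{8}^{b_{8}})^{*}-\gamma_{1}^{a_{1}}\cdots\delta_{8}^{b_{8}}\bigr)=u-u=0$, so $w:=\kappa^{-1}(T_{1}^{a_{1}}\cdots S_{8}^{b_{8}})^{*}-\gamma_{1}^{a_{1}}\cdots\delta_{8}^{b_{8}}$ lies in $\on{Ker}\pi^{\#}$. Rearranging gives $(T_{1}^{a_{1}}\cdots S_{8}^{b_{8}})^{*}=\kappa\bigl(\gamma_{1}^{a_{1}}\cdots\delta_{8}^{b_{8}}+w\bigr)$ with $\kappa\neq 0$, which is precisely the claimed identity with $z=w$.

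Apart from the routine bookkeeping (matching exponents, and noting $u\neq 0$ by Proposition~\ref{ExtRtilde}), the one genuinely delicate step is the computation of $\pi^{\#}$ on the dual basis element in the second paragraph: it hinges on the precise shape of $\psi_{m}$ from Proposition~\ref{LiftPsi}, in particular that $s_{3}$ and $s_{4}$ are sent to the non-scalar multiples $HS_{8}$ and $-\tfrac{1}{C_{p}}\omega^{p}S_{8}$ of $S_{8}$, and on the fact that these non-scalar factors are annihilated by the augmentation. This is what makes $\pi^{\#}$ collapse the image onto a single $Q$-monomial, thereby pinning down the dual basis of $H^{*}(R)$ in terms of the Yoneda monomials modulo $\on{Ker}\pi^{\#}$. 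No separate induction is required, the inductive work having already been carried out in Propositions~\ref{LiftPsi} and~\ref{ExtRtilde}.
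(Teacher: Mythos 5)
Your proof is correct, and it takes a genuinely different route from the paper's. The paper proves the proposition by induction on $m$: it multiplies the class of $S_1^{b_1}\cdots S_8^{b_8}$ (or of a shorter monomial) by a generator $\gamma_i$ or $\delta_j$, computes the Yoneda product by explicitly constructing lifts $g_1,g_2,g_3$ inside the only partially known minimal resolution $P$, and observes that all correction terms are duals of monomials containing variables other than the $T_i,S_j$, hence lie in $\on{Ker}(\pi^{\#})$. You instead bypass the induction entirely: you compute $\pi^{\#}\bigl((T_1^{a_1}\cdots S_8^{b_8})^*\bigr)=(T_1^{a_1}\cdots S_8^{b_8})^*\circ\psi_m$ directly from the closed formula of Proposition~\ref{LiftPsi}, use the key observation that the non-scalar factors $H$ and $-\tfrac{1}{C_p}\omega^p$ attached to $s_3,s_4$ die under the augmentation so that the image collapses onto the dual of a single $Q$-monomial, identify that dual via Proposition~\ref{ExtRtilde}, and match it against $\pi^{\#}(\gamma_1^{a_1}\cdots\delta_8^{b_8})$ using the multiplicativity of $\pi^{\#}$ and the recorded values on generators. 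Since the conclusion is precisely the assertion that the two classes agree modulo $\on{Ker}(\pi^{\#})$ up to a nonzero scalar, this settles the statement. What your approach buys is brevity and the avoidance of any further computation inside the unknown higher-degree part of $P$; what it costs is that you lean on two external facts: the minimality of the acyclic closure in \emph{all} degrees (needed so that every monomial dual is a cocycle and the duals form a basis of $H^m(R)$ — the paper only verifies this by hand through degree $2$, though it is implicit in the very statement of the proposition, and is covered by the Gulliksen--Schoeller theorem, which you should cite), and the fact that $\pi^{\#}$ is an algebra map for the Yoneda product, which the paper does state. The paper's induction, while longer, re-derives the relevant products from first principles and exhibits the correction terms $z$ explicitly as spans of duals of monomials involving the extra variables, information it reuses immediately afterwards to describe $\on{Ker}(\pi^{\#})$ and $\on{Im}(\pi^{\#})$.
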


\begin{proof}
See Section~\ref{Appendix}.
\end{proof}
 
We know that $\on{Im}(\psi)$ is generated by the $T_i, S_j$ ($j \leq 8$), so any element of $H^*(R)$ corresponding to a monomial containing variables different from $T_i, S_j$ is in the kernel of $\pi^\#$. If we call $P$ the $R$-algebra giving the free resolution of $\cc$ as an $R$-module, we have:
\begin{align*}
\begin{array}{rcl}
\text{Ker}(\pi^{\#}) & = & \on{Span} \{\text{duals of monomials in } P \text{ containing at least one}  \\
 & & \quad \quad \quad \text{ variable different from }T_i, S_j \ (j \leq 8)\}.
\end{array}
\end{align*}
\indent A consequence of Proposition~\ref{MonomialContainingTheDual} is that 
\begin{align*}
\begin{array}{rcl}
\pi^\#((T_1^{a_1}\dots T_4^{a_4}S_1^{b_1}\dots S_8^{b_8})^*) & = & \pi^\#(a\gamma_1^{a_1}\dots \gamma_4^{a_4}\delta_1^{b_1}\dots \delta_8^{b_8}), \\[5pt]
& = & a\alpha_1^{a_1}\dots \alpha_4^{a_4}(\alpha_1^2)^{b_1}\dots (-\alpha_8^2)^{b_8}
\end{array}
\end{align*}
for some $a \in \cc^*$, and so:
\begin{align*}
\begin{array}[t]{l}
\on{Im}(\pi^{\#})= \cc[\alpha_5^2, \alpha_6^2, \alpha_7^2]  \otimes \left(\cc\langle \alpha_1, \dots, \alpha_4 \rangle /\left( \begin{array}{c} \alpha_i \alpha_j+\alpha_j \alpha_i, 1 \leq i \neq j \leq 4, \\ \alpha_4^2 \end{array}\right) \right).
\end{array}
\end{align*}
As $\on{Im}(\pi^{\#})$ can be seen as the quotient of $H^*(R)$ by $\text{Ker}(\pi^{\#})$, we have proved the following:

\begin{proposition}\label{QuotientofExt}
There exists a two sided ideal $I$ of $\on{Ext}_{\on{gr}A(\mathcal{W}(p))}^*(X_{\on{gr}A}, X_{\on{gr}A})$ such that:
\begin{align*}
\resizebox{\hsize}{!}{$
\on{Ext}_{\on{gr}A(\mathcal{W}(p))}^*(X_{\on{gr}A}, X_{\on{gr}A})/I \cong \cc[\alpha_5^2, \alpha_6^2, \alpha_7^2]  \otimes \left(\cc\langle \alpha_1, \dots, \alpha_4 \rangle /\left(\begin{array}{c} \alpha_i \alpha_j+\alpha_j \alpha_i,  1 \leq i \neq j \leq 4, \\ \alpha_4^2 \end{array}\right) \right).$}
\end{align*}
\end{proposition}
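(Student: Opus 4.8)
The plan is to read the proposition off as the first isomorphism theorem applied to the algebra map $\pi^{\#}$, once its image has been pinned down; essentially all of the substance is already contained in the explicit description of $\pi^{\#}$ and in Proposition~\ref{MonomialContainingTheDual}. First I would record that $\pi^{\#}\colon H^{*}(R)\longrightarrow H^{*}(\widetilde{R})$ is a homomorphism of graded algebras for the Yoneda product; this was noted above and follows from the functoriality of $\on{Ext}$ under the ring map $\pi\colon\widetilde{R}\to R$, since the Yoneda product is computed by splicing $n$-fold extensions and $\pi^{\#}$ is realised by the lift $\psi$ of the augmented resolutions (Proposition~\ref{LiftPsi}). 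Consequently $I:=\text{Ker}(\pi^{\#})$ is a homogeneous two-sided ideal of $H^{*}(R)=\on{Ext}_{\on{gr}A(\mathcal{W}(p))}^{*}(X_{\on{gr}A},X_{\on{gr}A})$, and the induced map yields a graded algebra isomorphism $H^{*}(R)/I\stackrel{\cong}{\longrightarrow}\on{Im}(\pi^{\#})$. Thus the proposition reduces to identifying $\on{Im}(\pi^{\#})$ with the stated target algebra.

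To identify the image I would argue two inclusions. For the inclusion $\supseteq$, the computations $\pi^{\#}(\gamma_i)=\alpha_i$ for $1\le i\le 4$ and $\pi^{\#}(\delta_j)=-\alpha_j^{2}$ for $j=5,6,7$ show that all generators of the target lie in $\on{Im}(\pi^{\#})$; the defining relations $\alpha_i\alpha_j+\alpha_j\alpha_i=0$ $(i\ne j)$ and $\alpha_4^{2}=0$ hold in $H^{*}(\widetilde{R})$ by Proposition~\ref{ExtRtilde}, and there $\alpha_5^{2},\alpha_6^{2},\alpha_7^{2}$ are genuine non-nilpotent, algebraically independent degree-$2$ generators (they are, up to sign, the classes dual to $s_5,s_6,s_7$ in the complete-intersection description). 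For the inclusion $\subseteq$, I would invoke Proposition~\ref{MonomialContainingTheDual}: every basis element of $H^{m}(R)$ is, modulo $\text{Ker}(\pi^{\#})$, a nonzero multiple of a monomial $\gamma_1^{a_1}\cdots\gamma_4^{a_4}\delta_1^{b_1}\cdots\delta_8^{b_8}$, whose image is $a\,\alpha_1^{a_1}\cdots\alpha_4^{a_4}(\alpha_1^{2})^{b_1}(\alpha_2^{2})^{b_2}(-\alpha_5^{2})^{b_5}\cdots(-\alpha_8^{2})^{b_8}$ for some $a\in\cc^{*}$. Using $\gamma_1^{2}=\delta_1$, $\gamma_2^{2}=\delta_2$, $\gamma_3^{2}=\delta_8$ together with $\alpha_8^{2}=-\alpha_3^{2}$, every such image collapses into the subalgebra generated by $\alpha_1,\dots,\alpha_4$ and $\alpha_5^{2},\alpha_6^{2},\alpha_7^{2}$, so $\on{Im}(\pi^{\#})$ is contained in the target. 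Combining the two inclusions gives the asserted equality, and with $I=\text{Ker}(\pi^{\#})$ the isomorphism of the proposition follows.

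The genuinely laborious input is not this final bookkeeping but the two structural results on which it rests: Proposition~\ref{ExtRtilde}, which makes $H^{*}(\widetilde{R})$ fully explicit from Tate's resolution of the complete intersection $\widetilde{R}$, and Proposition~\ref{MonomialContainingTheDual}, which controls $H^{*}(R)$ well enough—even though $R$ is not a complete intersection and its minimal resolution is not known in full—to guarantee that nothing beyond the listed monomials survives modulo $\text{Ker}(\pi^{\#})$. The main obstacle I anticipate is exactly this last point: one must be certain that the duals of the monomials in $T_i,S_j$ with $j\le 8$ map to linearly independent classes in $H^{*}(\widetilde{R})$, so that the quotient is no larger than claimed. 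This is precisely what the explicit lift $\psi$ of Proposition~\ref{LiftPsi} and the non-triviality assertion of Proposition~\ref{MonomialContainingTheDual} are designed to supply; once injectivity of $\pi^{\#}$ on the span of these monomials is secured, the identification of $\on{Im}(\pi^{\#})$, and hence the proposition, is immediate.
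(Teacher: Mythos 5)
Your proposal is correct and follows essentially the same route as the paper: the paper also takes $I=\on{Ker}(\pi^{\#})$, uses that $\pi^{\#}$ respects the Yoneda product to get $H^{*}(R)/I\cong\on{Im}(\pi^{\#})$, and pins down the image via the explicit values $\pi^{\#}(\gamma_i)=\alpha_i$, $\pi^{\#}(\delta_j)=\pm\alpha_j^{2}$ together with Proposition~\ref{MonomialContainingTheDual} and the description of $\on{Ker}(\pi^{\#})$ as the span of duals of monomials involving variables other than the $T_i$ and $S_j$ ($j\le 8$). The two inclusions you spell out are exactly the paper's argument.
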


\begin{remark}\label{rem:6.19}
\begin{enumerate}
\item We know that $kQ_{\on{gr}A(\mathcal{W}(p))}=\cc \langle E,F,H,\omega \rangle$ is a free algebra, so there is a surjection:
\begin{equation*}
kQ_{\on{gr}A(\mathcal{W}(p))}  \  \raisebox{2pt}{\begin{tikzpicture}[scale=1, transform shape] \tikzset{>=stealth} \draw[->]  (0.4,0) -- (1,0); \draw[->]  (0.4,0) -- (0.9,0); \end{tikzpicture}} \  \cc[E,F,H,\omega]/(r_1, \dots, r_{11}) \cong \on{End}_{\on{gr}A(\mathcal{W}(p))}(\mathcal{P}_{\on{gr}A}). 
\end{equation*}
However the Ext algebra behaves differently than those of $A(\mathcal{W}(p))$ and $\mathcal{W}(p)$. The natural homomorphism $kQ_{\on{gr}A(\mathcal{W}(p))} \longrightarrow \on{Ext}_{\on{gr}A(\mathcal{W}(p))}^*(X_{\on{gr}A}, X_{\on{gr}A})$ sending each arrow to its corresponding element of the basis of $\on{Ext}_{\on{gr}A(\mathcal{W}(p))}^1(X_{\on{gr}A}, X_{\on{gr}A})$ (see Lemma~\ref{kQandExt}) is not surjective anymore. Indeed, for it to be surjective it is necessary that $\on{Ext}_{\on{gr}A(\mathcal{W}(p))}^*(X_{\on{gr}A}, X_{\on{gr}A})$ is generated by elements of degree $1$ over $\on{Ext}_{\on{gr}A(\mathcal{W}(p))}^0(X_{\on{gr}A}, X_{\on{gr}A})$. But we know that for example $\delta_5$ is of degree 2 and is not generated by elements of lower degrees. Therefore such a natural surjective homomorphism does not exist.

\item We have seen that the known presentation of $R(\mathcal{W}(p))$ is not a complete intersection, making the Ext algebra difficult to determine explicitly. This raises the following question: what type of vertex operator algebra $V$ satisfies the property that $R(V)$ is a complete intersection? We know that for $\mathfrak{g}$ a Lie algebra of dimension $n$ and $k \in \cc$ we have $R(V_{\hat{\mathfrak{g}}}(k,0)) \cong \cc[x_1, \dots, x_n]$ and so it is a complete intersection. However it is proved in \cite[Proposition 6.4]{Caradot-Jiang-Lin} that for all $k \in \mathbb{N}^*$, $\on{Spec}(R(L_{\hat{\mathfrak{g}}}(k,0)))$ is not a complete intersection as a subspace of $\mathfrak{g}^*$. Therefore the relationship between the properties of the vertex operator algebra and the complete intersection nature of the $C_2$-algebra is unclear.
\end{enumerate}
\end{remark}

We see that $\on{Im}(\pi^{\#})$ contains $\cc[\alpha_1]$, therefore $\on{Ext}_{\on{gr}A(\mathcal{W}(p))}^*(X_{\on{gr}A}, X_{\on{gr}A})$ contains $\cc[\gamma_1]$, meaning that for all $n \geq 1$, we have $0 \neq \gamma_1^n \in \on{Ext}_{\on{gr}A(\mathcal{W}(p))}^n(X_{\on{gr}A}, X_{\on{gr}A})$. We obtain the following corollary:

\begin{corollary}
The $\on{gr}A(\mathcal{W}(p))$-module $X_{\on{gr}A}$ has infinite projective dimension, and so $\on{gr}A(\mathcal{W}(p))$ has infinite global dimension.
\end{corollary}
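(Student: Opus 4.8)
The plan is to derive the corollary immediately from the non-vanishing of the powers $\gamma_1^n$ established in the paragraph just above it, using nothing more than the standard homological characterization of projective dimension. First I would recall that for any module $M$ over a ring one has $\text{proj.dim}\,M \leq d$ if and only if $\on{Ext}^i(M,N)=0$ for all $i>d$ and all modules $N$; equivalently, $\text{proj.dim}\,M=+\infty$ as soon as there is a single coefficient module $N$ for which $\on{Ext}^i(M,N)\neq 0$ for arbitrarily large $i$. This is the only external fact needed, and it is completely routine.

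Next I would apply this criterion with $M=N=X_{\on{gr}A}$. Since $\pi^{\#}$ is a homomorphism of algebras preserving the Yoneda product and $\pi^{\#}(\gamma_1)=\alpha_1$, we have $\pi^{\#}(\gamma_1^n)=\alpha_1^n$, which is nonzero in $H^*(\widetilde{R})$ by Proposition~\ref{ExtRtilde}. Consequently $\gamma_1^n\neq 0$ in $H^n(R)=\on{Ext}_{\on{gr}A(\mathcal{W}(p))}^n(X_{\on{gr}A},X_{\on{gr}A})$ for every $n\geq 1$, so the polynomial subalgebra $\cc[\gamma_1]$ embeds into the Yoneda algebra and each graded piece $\on{Ext}_{\on{gr}A(\mathcal{W}(p))}^n(X_{\on{gr}A},X_{\on{gr}A})$ is nonzero. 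If $X_{\on{gr}A}$ had finite projective dimension $d$, then $\on{Ext}^{d+1}_{\on{gr}A(\mathcal{W}(p))}(X_{\on{gr}A},X_{\on{gr}A})$ would vanish, contradicting $\gamma_1^{d+1}\neq 0$. Hence $\text{proj.dim}\,X_{\on{gr}A}=+\infty$.

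Finally, the global dimension of an algebra is the supremum of the projective dimensions of all its modules, so it is bounded below by $\text{proj.dim}\,X_{\on{gr}A}$; since the latter is infinite, the global dimension of $\on{gr}A(\mathcal{W}(p))$ is infinite. (One could alternatively invoke that for a finite-dimensional local algebra the global dimension equals $\text{proj.dim}$ of the unique simple module $X_{\on{gr}A}=\cc$, which makes the last step tautological.)

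I do not expect any genuine obstacle here: the substantive work has already been done in constructing the complete intersection approximation $\widetilde{R}$, the induced map $\pi^{\#}$, and the verification that $\cc[\alpha_1]\subseteq \on{Im}(\pi^{\#})$, which is precisely what guarantees the survival of all powers $\gamma_1^n$. The only point requiring a moment's care is to use the criterion in the correct direction—finite projective dimension forces \emph{all} high Ext groups to vanish against \emph{every} coefficient module, so exhibiting one nonzero class $\gamma_1^n$ in each degree against the single module $X_{\on{gr}A}$ already suffices to rule out finiteness.
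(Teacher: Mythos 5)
Your proposal is correct and follows the same route as the paper: the corollary is deduced directly from the observation, made in the paragraph preceding it, that $\cc[\gamma_1]$ embeds in the Yoneda algebra because $\pi^{\#}(\gamma_1^n)=\alpha_1^n\neq 0$ in $H^*(\widetilde{R})$, so $\on{Ext}^n_{\on{gr}A(\mathcal{W}(p))}(X_{\on{gr}A},X_{\on{gr}A})\neq 0$ for all $n$. Your explicit invocation of the vanishing criterion for finite projective dimension and of the local-algebra characterization of global dimension merely spells out what the paper leaves implicit.
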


\begin{remark}
The algebra $\mathcal{E}(\on{gr}A(\mathcal{W}(p)))(\cong \mathcal{E}(\on{gr}A(\mathcal{W}(p)))^{op})$ is a quotient of the path algebra of the four loops quiver. However, as $\on{Ext}_{\on{gr}A(\mathcal{W}(p))}^*(X_{\on{gr}A}, X_{\on{gr}A})$ is not generated in degree $0$ and $1$ by Remark~\ref{rem:6.19}, we conclude that $\mathcal{E}(\on{gr}A(\mathcal{W}(p)))$ is not Koszul.
\end{remark}

From Proposition~\ref{QuotientofExt}, we know that there is a surjection
\[
\on{Ext}_{\on{gr}A(\mathcal{W}(p))}^*(X_{\on{gr}A}, X_{\on{gr}A}) \longrightarrow \cc[\alpha_5^2, \alpha_6^2, \alpha_7^2] \otimes \big( \cc \langle \alpha_1, \alpha_2, \alpha_3 \rangle/(\alpha_i \alpha_j+\alpha_j \alpha_i, 1 \leq i \neq j \leq 3) \big).
\]
We immediately obtain:

\begin{corollary}
The Gelfand-Kirillov dimension of the algebra $\on{Ext}_{\on{gr}A(\mathcal{W}(p))}^*(X_{\on{gr}A}, X_{\on{gr}A})$ is at least $6$.
\end{corollary}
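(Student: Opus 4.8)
The plan is to read off the bound directly from the surjection displayed just before the statement, combined with the elementary fact that Gelfand--Kirillov dimension cannot increase under a surjective algebra homomorphism. Write $A = \on{Ext}_{\on{gr}A(\mathcal{W}(p))}^*(X_{\on{gr}A}, X_{\on{gr}A})$ and let
\[
B = \cc[\alpha_5^2, \alpha_6^2, \alpha_7^2] \otimes \big( \cc\langle \alpha_1, \alpha_2, \alpha_3\rangle/(\alpha_i\alpha_j + \alpha_j\alpha_i,\ 1 \le i \ne j \le 3)\big)
\]
be the target of the surjection coming from Proposition~\ref{QuotientofExt}. It then suffices to prove $\on{GKdim}(B) \ge 6$, since the monotonicity will give $\on{GKdim}(A) \ge \on{GKdim}(B)$. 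For the monotonicity I would argue directly from the definition: given any finite-dimensional $W \subseteq B$, lift a basis of $W$ to obtain a finite-dimensional $V \subseteq A$ whose image is $W$; because $W^{\le n}$ is the image of $V^{\le n}$ under the surjection, one has $\dim_\cc W^{\le n} \le \dim_\cc V^{\le n}$, and taking the supremum over $W$ yields $\on{GKdim}(B) \le \on{GKdim}(A)$ (cf. \cite{Krause-Lenagan}). I emphasize that this reduction does not require $A$ to be finitely generated, which matters here since $A$ has not been computed in full.

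The second step is to produce an explicit $\cc$-basis of $B$ and count. In the factor $C = \cc\langle \alpha_1, \alpha_2, \alpha_3\rangle/(\alpha_i\alpha_j + \alpha_j\alpha_i)$ the only relations are the anticommutation relations, so the squares $\alpha_1^2,\alpha_2^2,\alpha_3^2$ are central and satisfy no relation whatsoever. A diamond-lemma (PBW-type) argument then shows that the ordered monomials $\alpha_1^{n_1}\alpha_2^{n_2}\alpha_3^{n_3}$ with $n_i \in \mathbb{N}$ form a $\cc$-basis of $C$, so $C$ has the same growth as a polynomial ring in three variables. Tensoring with the genuine polynomial ring $\cc[\alpha_5^2, \alpha_6^2, \alpha_7^2]$, the set
\[
\{(\alpha_5^2)^{m_5}(\alpha_6^2)^{m_6}(\alpha_7^2)^{m_7}\alpha_1^{n_1}\alpha_2^{n_2}\alpha_3^{n_3} \ : \ m_5,m_6,m_7,n_1,n_2,n_3 \in \mathbb{N}\}
\]
is a $\cc$-basis of $B$, indexed by six independent nonnegative integer parameters.

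To conclude I would take the generating subspace $V = \cc\text{-span}\{1, \alpha_1, \alpha_2, \alpha_3, \alpha_5^2, \alpha_6^2, \alpha_7^2\}$; then $V^{\le n}$ contains every basis monomial with $m_5+m_6+m_7+n_1+n_2+n_3 \le n$, so $\dim_\cc V^{\le n} = \binom{n+6}{6}$, a polynomial in $n$ of degree $6$. Hence $\on{GKdim}(B) = 6$, and the monotonicity step gives $\on{GKdim}(A) \ge 6$. The statement is phrased as an inequality precisely because $B$ is only a quotient of $A$: the uncontrolled part $\on{Ker}(\pi^{\#})$ can only add further growth. There is essentially no real obstacle beyond the one verification that genuinely carries the weight of the argument, namely that $C$ has no hidden relations among the squares of its generators — this is exactly what guarantees that the non-commutative factor contributes growth of degree $3$ (not less), and therefore that the two polynomial-type factors combine to give total degree $6$.
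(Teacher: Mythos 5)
Your argument is exactly the paper's: the corollary is derived immediately from the surjection of Proposition~\ref{QuotientofExt} onto $B=\cc[\alpha_5^2,\alpha_6^2,\alpha_7^2]\otimes\big(\cc\langle\alpha_1,\alpha_2,\alpha_3\rangle/(\alpha_i\alpha_j+\alpha_j\alpha_i)\big)$, using that Gelfand--Kirillov dimension does not increase under surjective homomorphisms and that $B$ has GK-dimension $6$. Your proposal merely makes explicit the monotonicity step and the ordered-monomial basis count for $B$, both of which are correct.
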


\section{Appendix}\label{Appendix}
\delete{\subsection{Proofs of Section 6.1}\label{Proofs_of_Section_6.1}

\begin{proof}[Proof of Proposition~\ref{RelationsInExt}]
We first show that $\alpha_1\beta_2+\alpha_2\beta_1=0$ in $\on{Ext}_{\mathcal{W}(p)}^{2}(X_s^+, X_s^+)$. An element of $\on{Ext}_{\mathcal{W}(p)}^{1}(X_s^-, X_s^+)$ is represented by an element of $\text{Hom}_{\mathcal{W}(p)}(P_s^+ \oplus P_s^+, X_s^+)$. Because $(\alpha_1,\alpha_2)$ is a basis $\on{Ext}_{\mathcal{W}(p)}^{1}(X_s^-, X_s^+)$, we choose the basis so that $\alpha_1$ and $\alpha_2$ are represented by:
\begin{align*}
\begin{array}[t]{cccc}
\alpha_1: & P_s^+ \oplus P_s^+ & \longrightarrow & X_s^+\\
                 & (x,y) & \longmapsto      & \overline{x}
\end{array} \text{ and }\begin{array}[t]{cccc}
\alpha_2: & P_s^+ \oplus P_s^+ & \longrightarrow & X_s^+\\
                 & (x,y) & \longmapsto      & \overline{y}
\end{array}
\end{align*} 
where $\overline{x}$ and $\overline{y}$ are the class of $x$ and $y$ modulo $\text{Rad}(P_s^+)$ (we keep the same name for the element of $\on{Ext}_{\mathcal{W}(p)}^{1}(X_s^-, X_s^+)$ and its homomorphism representative). Likewise, we choose $\beta_1,\beta_2$ so that they are represented by:
\begin{align*}
\begin{array}[t]{cccc}
\beta_1: & P_s^- \oplus P_s^- & \longrightarrow & X_s^-\\
                 & (x,y) & \longmapsto      & \overline{x}
\end{array}  \text{ and } \begin{array}[t]{cccc}
\beta_2: & P_s^- \oplus P_s^- & \longrightarrow & X_s^-\\
                 & (x,y) & \longmapsto      & \overline{y}.
\end{array}
\end{align*} 

We know that the product $\alpha_i\beta_1$ is defined as $\alpha_i \circ \beta_1'': P_s^+ \oplus P_s^+ \oplus P_s^+\longrightarrow X_s^+$ where $\beta_1''$ is a lift of $\beta_1$ such that the following diagram commutes:
\begin{center}
 \begin{tikzpicture}[scale=1, transform shape]
\tikzset{>=stealth}
\node (1) at (-2,0) []{$P_s^+ \oplus P_s^+ \oplus P_s^+$};
\node (2) at (-2,-2) []{$P_s^+ \oplus P_s^+$};
\node (3) at (2,0) []{$P_s^- \oplus P_s^-$};
\node (4) at (2,-2) []{$P_s^-$};
\node (5) at (4,-2) []{$X_s^-$};
\node (6) at (-2,-3.5) []{$X_s^+$};
\draw[->,dashed]  (1) -- (2) node[midway, right] {$\beta_1''$};
\draw[->,dashed]  (3) -- (4) node[midway, right] {$\beta_1'$};
\draw[->]  (1) -- (3) node[midway, above] {$\partial_2$};
\draw[->]  (2) -- (4) node[midway, above] {$\partial_1$};
\draw[->]  (4) -- (5) node[midway, above] {$\partial_0$};
\draw[->]  (3) -- (5) node[midway, above right] {$\beta_1$};
\draw[->]  (2) -- (6) node[midway, right] {$\alpha_i$};
\end{tikzpicture}
\end{center}

The homomorphism $\partial_2$ can be represented with module diagrams as follows:
\begin{center}
\resizebox{\hsize}{!}{$
 \begin{tikzpicture}[scale=1, transform shape]
\tikzset{>=stealth}
\node (1) at (0,1.5) []{$X_s^+$};
\node (2) at (-1,0) []{$X_s^-$};
\node (3) at (1,0) []{$X_s^-$};
\node (4) at (0,-1.5) []{$X_s^+$};
\draw[->]  (1) -- (2);
\draw[->]  (1) -- (3);
\draw[->]  (2) -- (4);
\draw[->]  (3) -- (4) ;

\node (5) at (3,1.5) []{$X_s^+$};
\node (6) at (2,0) []{$X_s^-$};
\node (7) at (4,0) []{$X_s^-$};
\node (8) at (3,-1.5) []{$X_s^+$};
\draw[->]  (5) -- (6);
\draw[->]  (5) -- (7);
\draw[->]  (6) -- (8);
\draw[->]  (7) -- (8) ;

\node (9) at (6,1.5) []{$X_s^+$};
\node (10) at (5,0) []{$X_s^-$};
\node (11) at (7,0) []{$X_s^-$};
\node (12) at (6,-1.5) []{$X_s^+$};
\draw[->]  (9) -- (10);
\draw[->]  (9) -- (11);
\draw[->]  (10) -- (12);
\draw[->]  (11) -- (12) ;

\draw[-]  (-0.5,1.75) -- (6.5,1.75);
\draw[-]  (1.75,0.25) -- (4.25,0.25);
\draw[-]  (0.5,-0.25) -- (-0.5,1.75);
\draw[-]  (0.5,-0.25) -- (1.25,-0.25) ;
\draw[-]  (1.75,0.25) -- (1.25,-0.25) ;
\draw[-]  (4.25,0.25) -- (4.75,-0.25);
\draw[-]  (4.75,-0.25) --  (5.5,-0.25) ;
\draw[-]  (5.5,-0.25) -- (6.5,1.75);

\node (13) at (10,1.5) []{$X_s^-$};
\node (14) at (9,0) []{$X_s^+$};
\node (15) at (11,0) []{$X_s^+$};
\node (16) at (10,-1.5) []{$X_s^-$};
\draw[->]  (13) -- (14);
\draw[->]  (13) -- (15);
\draw[->]  (14) -- (16);
\draw[->]  (15) -- (16) ;

\node (17) at (13,1.5) []{$X_s^-$};
\node (18) at (12,0) []{$X_s^+$};
\node (19) at (14,0) []{$X_s^+$};
\node (20) at (13,-1.5) []{$X_s^-$};
\draw[->]  (17) -- (18);
\draw[->]  (17) -- (19);
\draw[->]  (18) -- (20);
\draw[->]  (19) -- (20) ;

\draw[-]  (8.5,0.25) -- (14.5,0.25) ;
\draw[-]  (9.5,-1.75) -- (13.5,-1.75) ;
\draw[-]  (8.5,0.25) -- (9.5,-1.75) ;
\draw[-]  (14.5,0.25) -- (13.5,-1.75) ;

\draw[->]  (7.25,0.5) -- (8.25,0.25) node[midway, above] {$\partial_2$} ;
\end{tikzpicture}$}
\end{center}
such that $\partial_2 \left[ \begin{pmatrix} x &  & y & & z \\  & w &  & t & \end{pmatrix} \right]=\begin{pmatrix} x &  & y & & -y & & z  \\  & w & & & & t &  \end{pmatrix}$ (here the matrices represent the coordinates in the diagram above). Likewise $\partial_1$ is given by:
\begin{center}
 \begin{tikzpicture}[scale=1, transform shape]
\tikzset{>=stealth}
\node (1) at (0,1.5) []{$X_s^+$};
\node (2) at (-1,0) []{$X_s^-$};
\node (3) at (1,0) []{$X_s^-$};
\node (4) at (0,-1.5) []{$X_s^+$};
\draw[->]  (1) -- (2);
\draw[->]  (1) -- (3);
\draw[->]  (2) -- (4);
\draw[->]  (3) -- (4) ;

\node (5) at (3,1.5) []{$X_s^+$};
\node (6) at (2,0) []{$X_s^-$};
\node (7) at (4,0) []{$X_s^-$};
\node (8) at (3,-1.5) []{$X_s^+$};
\draw[->]  (5) -- (6);
\draw[->]  (5) -- (7);
\draw[->]  (6) -- (8);
\draw[->]  (7) -- (8) ;

\draw[-]  (0.5,-0.25) -- (2.5,-0.25) ;
\draw[-]  (-0.5,1.75) -- (3.5,1.75) ;
\draw[-]  (0.5,-0.25) -- (-0.5,1.75) ;
\draw[-]  (2.5,-0.25) -- (3.5,1.75) ;

\node (9) at (7,1.5) []{$X_s^-$};
\node (10) at (6,0) []{$X_s^+$};
\node (11) at (8,0) []{$X_s^+$};
\node (12) at (7,-1.5) []{$X_s^-$};
\draw[->]  (9) -- (10);
\draw[->]  (9) -- (11);
\draw[->]  (10) -- (12);
\draw[->]  (11) -- (12) ;

\draw[-]  (5.5,0.25) -- (8.5,0.25) ;
\draw[-]  (6.5,-1.75) -- (7.5,-1.75) ;
\draw[-]  (5.5,0.25) -- (6.5,-1.75) ;
\draw[-]  (8.5,0.25) -- (7.5,-1.75) ;

\draw[->]  (4,0.5) -- (5.25,0.25) node[midway, above] {$\partial_1$} ;
\end{tikzpicture}
\end{center}
such that $\partial_1 \left[ \begin{pmatrix} x &  &  & y  \\  & w & t &   \end{pmatrix} \right]=\begin{pmatrix} x &  & y   \\   & \frac{1}{2}(w+t) &   \end{pmatrix}$. Define:
\begin{align*}
\begin{array}[t]{cccc}
\beta_1': & P_s^- \oplus P_s^- & \longrightarrow & P_s^-\\
                 & (x,y) & \longmapsto      & x
\end{array}, \quad \begin{array}[t]{cccc}
\beta_2': & P_s^- \oplus P_s^- & \longrightarrow & P_s^-\\
                 & (x,y) & \longmapsto      & y
\end{array} 
\end{align*}
as well as $\beta_1'', \beta_2'': P_s^+ \oplus P_s^+ \oplus P_s^+ \longrightarrow P_s^+ \bigoplus P_s^+ $ such that 
\begin{align*}
\beta_1'' \left[  \begin{pmatrix} x &  & y & & z \\  & w &  & t & \end{pmatrix}  \right]= \begin{pmatrix} x &  &  & y  \\  & w & w &   \end{pmatrix} \text{ and } \beta_2'' \left[  \begin{pmatrix} x &  & y & & z \\  & w &  & t & \end{pmatrix}  \right]= \begin{pmatrix} -y &  &  & z  \\  & t & t &   \end{pmatrix} 
\end{align*}
for all $x,y,z \in X_s^+$, $w, t \in X_s^-$.

We see that $\beta_1' \circ \partial_2=\partial_1 \circ \beta_1''$ and that $\beta_1=\partial_0 \circ \beta_1'$. Hence $\alpha_1 \beta_1$ is represented by the homomorphism $\alpha_1 \circ \beta_1''$ and $\alpha_2 \beta_1$ by the homomorphism $\alpha_2 \circ \beta_1''$. With a similar verification, we check that $\alpha_1 \beta_2$ and $\alpha_2 \beta_2$ are represented by the homomorphisms $\alpha_1 \circ \beta_2''$ and $\alpha_2 \circ \beta_2''$ respectively. We can then verify that:
\[
\alpha_1 \circ \beta_2''+\alpha_2 \circ \beta_1''=0,
\]
which means that $\alpha_1 \beta_2+\alpha_2 \beta_1=0$ in $\on{Ext}_{\mathcal{W}(p)}^{2}(X_s^+, X_s^+)$ and so $\alpha_1\otimes \beta_2+\alpha_2 \otimes \beta_1 \in \text{Ker}(\pi)$. It is straightforward to check that any linear combination of the $\alpha_i\beta_j$ is zero if and only if it is proportional to $\alpha_1 \beta_2+\alpha_2 \beta_1$. It follows that $ \text{Ker}(\pi)=\cc(\alpha_1\otimes \beta_2+\alpha_2 \otimes \beta_1)$, and so $\text{dim Im}(\pi)=3=\on{dim}\on{Ext}_{\mathcal{W}(p)}^{2}(X_s^+, X_s^+)$ by Theorem~\ref{dimExtn}. Therefore $\pi$ is surjective. The other case is obtained by a symmetric construction.
\end{proof}

\begin{proof}[Proof of Theorem~\ref{GeneratorsOfExt}]
Set $i \geq 1$ and let $f \in \on{Ext}_{\mathcal{W}(p)}^{2i}(X_s^+, X_s^+)$ be an element of the basis represented by $f : \bigoplus_{k=1}^{2i+1}P_s^+ \longrightarrow X_s^+$ ($f$ is the quotient of one coordinate by $\text{Rad}(P_s^+)$). The module $X_s^+$ is simple hence $f$ is surjective, and $\bigoplus_{k=1}^{2i-1}P_s^+ \subseteq \text{Ker }f$. Furthermore, based on the construction of the projective resolution fo $X_s^+$, we know that $\Omega_{2i}=\text{Ker }\partial_{2i}$ sits in $\bigoplus_{k=1}^{2i+1}P_s^+$ like:
\begin{center}
 \begin{tikzpicture}[scale=1, transform shape]
\tikzset{>=stealth}
\node (1) at (0,1.5) []{$X_s^+$};
\node (2) at (-1,0) []{$X_s^-$};
\node (3) at (1,0) []{$X_s^-$};
\node (4) at (0,-1.5) []{$X_s^+$};
\draw[->]  (1) -- (2);
\draw[->]  (1) -- (3);
\draw[->]  (2) -- (4);
\draw[->]  (3) -- (4) ;

\node (5) at (4,1.5) []{$X_s^+$};
\node (6) at (3,0) []{$X_s^-$};
\node (7) at (5,0) []{$X_s^-$};
\node (8) at (4,-1.5) []{$X_s^+$};
\draw[->]  (5) -- (6);
\draw[->]  (5) -- (7);
\draw[->]  (6) -- (8);
\draw[->]  (7) -- (8) ;

\node (9) at (2,0) []{$\dots \dots$};

\draw[-]  (-1.5,0.25) -- (-0.5,0.25);
\draw[-]  (-0.5,0.25) -- (0.5,-0.25);
\draw[-]  (0.5,-0.25) -- (1.5,-0.25);

\draw[-]  (-1.5,0.25) -- (-0.5,-1.75) ;
\draw[-]  (-0.5,-1.75) --  (4.5,-1.75) ;
\draw[-]  (4.5,-1.75) -- (5.5,0.25);
\draw[-]   (5.5,0.25) -- (4.5,0.25);

\draw[-]  (4.5,0.25) -- (3.5,-0.25);
\draw[-]  (3.5,-0.25) -- (2.5,-0.25);

\node (10) at (2,-0.25) []{$\dots$};
\node (11) at (2,-1) []{$\Omega_{2i}$};

\end{tikzpicture}
\end{center}
Because $\bigoplus_{k=1}^{2i-1}P_s^+ \subseteq \text{Ker }f$, there exists  $\delta : P_s^+ \oplus P_s^+ \longrightarrow X_s^+$ such that $f=\delta \circ \varpi$ where $\varpi$ is the projection $\bigoplus_{k=1}^{2i+1}P_s^+ \longrightarrow (\bigoplus_{k=1}^{2i+1}P_s^+)/(\bigoplus_{k=1}^{2i-1}P_s^+)\cong P_s^+ \oplus P_s^+$. We also write $\delta$ for its class in $\on{Ext}_{\mathcal{W}(p)}^1(X_s^-, X_s^+)$.

We have the following diagram:
\begin{center}
 \begin{tikzpicture}[scale=1, transform shape]
\tikzset{>=stealth}
\node (1) at (0,0) []{$0$};
\node (2) at (3,0) []{$\Omega_{2i}$};
\node (3) at (6,0) []{$\displaystyle\bigoplus_{k=1}^{2i+1}P_s^+$};
\node (4) at (9,0) []{$\displaystyle\bigoplus_{k=1}^{2i}P_s^-$};
\node (5) at (12,0) []{$\displaystyle\bigoplus_{k=1}^{2i-1}P_s^+$};
\draw[->]  (1) -- (2);
\draw[->]  (2) -- (3);
\draw[->]  (3) -- (4)  node[midway, above] {$\partial_{2i}$};
\draw[->]  (4) -- (5)  node[midway, above] {$\partial_{2i-1}$};

\node (6) at (0,-2) []{$0$};
\node (7) at (3,-2) []{$\varpi(\Omega_{2i})$};
\node (8) at (6,-2) []{$P_s^+ \oplus P_s^+$};
\node (9) at (9,-2) []{$P_s^-$};
\node (10) at (12,-2) []{$X_s^-$};
\node (11) at (14,-2) []{$0$};

\draw[->]  (6) -- (7);
\draw[->]  (7) -- (8);
\draw[->]  (8) -- (9) node[midway, below] {$\partial_{1}$};
\draw[->]  (9) -- (10) node[midway, below] {$\partial_{0}$};
\draw[->]  (10) -- (11) ;

\draw[->]  (2) -- (7) node[midway, right] {$\varpi_{| \Omega_{2i}}$};
\draw[->,dashed]  (3) -- (8);
\draw[->,dashed]  (4) -- (9);
\draw[->,dashed]  (4) -- (10);
\end{tikzpicture}
\end{center}
The first line of the diagram is acyclic because of the construction of the projective resolution of $X_s^+$. In order for the second line to be a complex, we need to check that $\varpi(\Omega_{2i}) \subseteq \text{Ker }\partial_1$. The map $\varpi$ can be illustrated as follows:
\begin{center}
 \begin{tikzpicture}[scale=1, transform shape]
\tikzset{>=stealth}
\node (1) at (0,1.5) []{$X_s^+$};
\node (2) at (-1,0) []{$X_s^-$};
\node (3) at (1,0) []{$X_s^-$};
\node (4) at (0,-1.5) []{$X_s^+$};
\draw[->]  (1) -- (2);
\draw[->]  (1) -- (3);
\draw[->]  (2) -- (4);
\draw[->]  (3) -- (4) ;

\node (5) at (4,1.5) []{$X_s^+$};
\node (6) at (3,0) []{$X_s^-$};
\node (7) at (5,0) []{$X_s^-$};
\node (8) at (4,-1.5) []{$X_s^+$};
\draw[->]  (5) -- (6);
\draw[->]  (5) -- (7);
\draw[->]  (6) -- (8);
\draw[->]  (7) -- (8) ;

\node (9) at (2,0) []{$\dots \dots$};

\draw[-]  (-1.5,0.25) -- (-0.5,0.25);
\draw[-]  (-0.5,0.25) -- (0.5,-0.25);
\draw[-]  (0.5,-0.25) -- (1.5,-0.25);

\draw[-]  (-1.5,0.25) -- (-0.5,-1.75) ;
\draw[-]  (-0.5,-1.75) --  (4.5,-1.75) ;
\draw[-]  (4.5,-1.75) -- (5.5,0.25);
\draw[-]   (5.5,0.25) -- (4.5,0.25);

\draw[-]  (4.5,0.25) -- (3.5,-0.25);
\draw[-]  (3.5,-0.25) -- (2.5,-0.25);

\node (10) at (2,-0.25) []{$\dots$};
\node (11) at (2,-1) []{$\Omega_{2i}$};

\node (12) at (8,1.5) []{$X_s^+$};
\node (13) at (7,0) []{$X_s^-$};
\node (14) at (9,0) []{$X_s^-$};
\node (15) at (8,-1.5) []{$X_s^+$};
\draw[->]  (12) -- (13);
\draw[->]  (12) -- (14);
\draw[->]  (13) -- (15);
\draw[->]  (14) -- (15) ;

\node (16) at (11,1.5) []{$X_s^+$};
\node (17) at (10,0) []{$X_s^-$};
\node (18) at (12,0) []{$X_s^-$};
\node (19) at (11,-1.5) []{$X_s^+$};
\draw[->]  (16) -- (17);
\draw[->]  (16) -- (18);
\draw[->]  (17) -- (19);
\draw[->]  (18) -- (19) ;

\draw[-]  (6.5,0.25) -- (7.5,0.25);
\draw[-]  (7.5,0.25) -- (8.5,-0.25);
\draw[-]  (8.5,-0.25) -- (10.5,-0.25);
\draw[-]  (10.5,-0.25) -- (11.5,0.25);
\draw[-]  (11.5,0.25) -- (12.5,0.25);
\draw[-]  (12.5,0.25) -- (11.5,-1.75);
\draw[-]  (11.5,-1.75) -- (7.5,-1.75);
\draw[-]  (7.5,-1.75) -- (6.5,0.25);
\node (20) at (9.5,-1) []{$\varpi(\Omega_{2i})$};

\draw[->]  (5.5,0) -- (6.5,0) node[midway, above] {$\varpi$} ;
\end{tikzpicture}
\end{center}
where $\varpi$ removes $(2i-1)$ factors $P_s^+$. It follows that $\varpi(\Omega_{2i}) \subseteq \text{Ker }\partial_1$ by construction of $\partial_1$, and the second line is a complex. Moreover it was shown in \cite{Nagatomo-Tsuchiya} that $P_s^+$ and $P_s^-$ are injective $\mathcal{W}(p)$-modules. So the homomorphism $\varpi$ can be pushed down the diagram and give the dashed homomorphisms, the last one being $\gamma : \bigoplus_{k=1}^{2i}P_s^- \longrightarrow X_s^-$, hence representing an element in $\on{Ext}_{\mathcal{W}(p)}^{2i-1}(X_s^+, X_s^-)$. Therefore we see that the homomorphism $f=\delta \circ \varpi$ represents the product $\delta\gamma$ in $\on{Ext}_{\mathcal{W}(p)}^{2i}(X_s^+, X_s^+)$, and it is true for all elements of the basis of $\on{Ext}_{\mathcal{W}(p)}^{2i}(X_s^+, X_s^+)$. Thus the homomorphism $\on{Ext}_{\mathcal{W}(p)}^{1}(X_s^-, X_s^+) \otimes \on{Ext}_{\mathcal{W}(p)}^{2i-1}(X_s^+, X_s^-) \longrightarrow \on{Ext}_{\mathcal{W}(p)}^{2i}(X_s^+, X_s^+)$ is surjective, and with the same type of reasoning, we show that the following homomorphisms are also surjective:
\begin{equation*}
\begin{cases}
\on{Ext}_{\mathcal{W}(p)}^{1}(X_s^+, X_s^-) \otimes \on{Ext}_{\mathcal{W}(p)}^{2i-1}(X_s^-, X_s^+) \longrightarrow \on{Ext}_{\mathcal{W}(p)}^{2i}(X_s^-, X_s^-), \\[5pt]
\on{Ext}_{\mathcal{W}(p)}^{1}(X_s^+, X_s^-) \otimes \on{Ext}_{\mathcal{W}(p)}^{2i}(X_s^+, X_s^+) \longrightarrow \on{Ext}_{\mathcal{W}(p)}^{2i+1}(X_s^+, X_s^-), \\[5pt]
\on{Ext}_{\mathcal{W}(p)}^{1}(X_s^-, X_s^+) \otimes \on{Ext}_{\mathcal{W}(p)}^{2i}(X_s^-, X_s^-) \longrightarrow \on{Ext}_{\mathcal{W}(p)}^{2i+1}(X_s^-, X_s^+).
\end{cases}       
\end{equation*}
Any element of $\on{Ext}_{\mathcal{W}(p)}^*(X_{\mathcal{W}(p)}, X_{\mathcal{W}(p)})$ of degree $n$ can then be expressed as a sum of products of elements of degree $1$ and elements of degree $n-1$. By repeating this process, we see that $\on{Ext}_{\mathcal{W}(p)}^*(X_{\mathcal{W}(p)}, X_{\mathcal{W}(p)})$ is generated in degree $0$ and $1$.
\end{proof}

\begin{proof}[Proof of Proposition~\ref{kQandExtforW(p)}]
We have seen that there is a well-defined homomorphism $kQ_{\mathcal{W}(p)}/\mathcal{I}_{\mathcal{W}(p)} \longrightarrow \on{Ext}_{\mathcal{W}(p)}^*(X_{\mathcal{W}(p)}, X_{\mathcal{W}(p)})$ that sends $(\alpha_{s,1},\alpha_{s,2})$ (resp. $(\beta_{s,1},\beta_{s,2})$) to $(\alpha^s_1, \alpha^s_2)$ (resp. $(\beta^s_1, \beta^s_2)$). Based on Theorem~\ref{W(p)_Yoneda}, the homomorphism is surjective, and it induces a surjection between homogeneous subspaces of the same degree. By proving that these subspaces have the same dimension, we will have an isomorphism.

We already know the dimensions of the homogeneous subspaces of $\on{Ext}_{\mathcal{W}(p)}^*(X_{\mathcal{W}(p)}, X_{\mathcal{W}(p)})$ from \cite{GSTF}. The algebra $\mathcal{A}=kQ_{\mathcal{W}(p)}/\mathcal{I}_{\mathcal{W}(p)}$ can be written as a direct sum:
\begin{align*}
\mathcal{A}=\bigoplus_{s=1}^{p-1}\left[\bigoplus_{i=1}^\infty\mathcal{A}_{\alpha_{s}}^i \oplus \bigoplus_{i=1}^\infty\mathcal{A}_{\beta_{s}}^i \oplus \cc e_{s}^+ \oplus \cc e_{s}^- \right] \oplus \cc e_{p}^+ \oplus \cc e_{p}^- ,
\end{align*}
where $\alpha_{s, k},\beta_{s, k}$ are of degree 1, $e_{j}^\pm$ are of degree 0 for $1 \leq j \leq p$, and $\mathcal{A}_{\alpha_{s}}^i$ (resp. $\mathcal{A}_{\beta_{s}}^i$)  is the subspace of $\mathcal{A}$ spanned by elements of degree $i$ and ending with $\alpha_{s, 1}$ or $\alpha_{s, 2}$ (resp. $\beta_{s, 1}$ or $\beta_{s, 2}$). 

From the relations defining $\mathcal{I}_{\mathcal{W}(p)}$, we see that from a monomial in $\alpha_{s, i},\beta_{s, j}$ we can replace $\alpha_{s, 2}\beta_{s, 1}$ (resp. $\beta_{s, 2}\alpha_{s, 1}$) by $-\alpha_{s, 1}\beta_{s, 2}$ (resp. by $-\beta_{s, 1}\alpha_{s, 2}$) so that the indices are non-decreasing. We construct all independent elements of $\mathcal{A}_{\alpha_s}$ successively by multiplying $\alpha_{s, 1}$ and $\alpha_{s, 2}$ on the left so that the indices do not decrease. We obtain the following trees:
\begin{center}
 \begin{tikzpicture}[scale=1, transform shape]
\tikzset{>=stealth}
\node (1) at (0,0) []{$\alpha_{s, 2}$};
\node (2) at (1,2) []{$\beta_{s, 1}\alpha_{s, 2}$};
\node (3) at (1,-2) []{$\beta_{s, 2}\alpha_{s, 2}$};
\node (4) at (3.5,2) []{$\alpha_{s, 1}\beta_{s, 1}\alpha_{s, 2}$};
\node (5) at (7,2) []{$\beta_{s, 1}\alpha_{s, 1}\beta_{s, 1}\alpha_{s, 2}$};
\node (6) at (11,2) []{$\alpha_{s,1}\beta_{s, 1}\alpha_{s, 1}\beta_{s, 1}\alpha_{s, 2}$};
\node (22) at (14,2) []{$\dots$};
\draw[->]  (1) -- (2);
\draw[->]  (1) -- (3);
\draw[->]  (2) -- (4);
\draw[->]  (4) -- (5);
\draw[->]  (5) -- (6);
\draw[->]  (6) -- (22);

\node (7) at (3.5,0) []{$\alpha_{s, 1}\beta_{s, 2}\alpha_{s, 2}$};
\node (8) at (3.5,-4) []{$\alpha_{s, 2}\beta_{s, 2}\alpha_{s, 2}$};
\node (9) at (7,0) []{$\beta_{s,1}\alpha_{s, 1}\beta_{s, 2}\alpha_{s, 2}$};
\node (10) at (11,0) []{$\alpha_{s,1}\beta_{s,1}\alpha_{s, 1}\beta_{s, 2}\alpha_{s, 2}$};
\node (11) at (14,0) []{$\dots$};
\draw[->]  (3) -- (7);
\draw[->]  (3) -- (8);
\draw[->]  (7) -- (9);
\draw[->]  (9) -- (10);
\draw[->]  (10) -- (11);

\node (12) at (7,-2) []{$\beta_{s,1}\alpha_{s, 2}\beta_{s, 2}\alpha_{s, 2}$};
\node (13) at (11,-2) []{$\alpha_{s,1}\beta_{s,1}\alpha_{s, 2}\beta_{s, 2}\alpha_{s, 2}$};
\node (14) at (14,-2) []{$\dots$};
\draw[->]  (8) -- (12);
\draw[->]  (12) -- (13);
\draw[->]  (13) -- (14);

\node (15) at (7,-5) []{$\beta_{s,2}\alpha_{s, 2}\beta_{s, 2}\alpha_{s, 2}$};
\node (16) at (11,-3.5) []{$\alpha_{s,1}\beta_{s,2}\alpha_{s, 2}\beta_{s, 2}\alpha_{s, 2}$};
\node (17) at (14,-3.5) []{$\dots$};
\node (18) at (14,-3.5) []{$\dots$};
\draw[->]  (8) -- (15);
\draw[->]  (15) -- (16);
\draw[->]  (16) -- (17);
\draw[->]  (16) -- (17);

\node (19) at (11,-7) []{$\alpha_{s,2}\beta_{s,2}\alpha_{s, 2}\beta_{s, 2}\alpha_{s, 2}$};
\node (20) at (14,-6) []{$\dots$};
\node (21) at (14,-8) []{$\dots$};
\draw[->]  (15) -- (19);
\draw[->]  (19) -- (20);
\draw[->]  (19) -- (21);

\node (23) at (-0.2,-10) []{$\alpha_{s, 1}$};
\node (24) at (1.4,-10) []{$\beta_{s, 1}\alpha_{s, 1}$};
\node (25) at (3.7,-10) []{$\alpha_{s, 1}\beta_{s, 1}\alpha_{s, 1}$};
\node (26) at (7,-10) []{$\beta_{s, 1}\alpha_{s, 1}\beta_{s, 1}\alpha_{s, 1}$};
\node (27) at (11,-10) []{$\alpha_{s,1}\beta_{s, 1}\alpha_{s, 1}\beta_{s, 1}\alpha_{s, 1}$};
\node (28) at (14,-10) []{$\dots$};
\draw[->]  (23) -- (24);
\draw[->]  (24) -- (25);
\draw[->]  (25) -- (26);
\draw[->]  (26) -- (27);
\draw[->]  (27) -- (28);

\end{tikzpicture}
\end{center}
By counting the number of terms on the $i$-th column, we get that $\on{dim} \mathcal{A}_{\alpha_s}^i=i+1$, and by the same reasoning $\on{dim} \mathcal{A}_{\beta_s}^i=i+1$. Furthermore, we know of the following surjections:
\begin{align*}
\renewcommand\arraystretch{1.5}
\left\{\begin{array}{lcc}
\mathcal{A}_{\alpha_s}^{2i} & \longrightarrow & \on{Ext}_{\mathcal{W}(p)}^{2i}(X_s^-, X_s^-), \\
\mathcal{A}_{\alpha_s}^{2i+1} & \longrightarrow & \on{Ext}_{\mathcal{W}(p)}^{2i+1}(X_s^-, X_s^+), \\
\mathcal{A}_{\beta_s}^{2i} & \longrightarrow & \on{Ext}_{\mathcal{W}(p)}^{2i}(X_s^+, X_s^+), \\
\mathcal{A}_{\beta_s}^{2i+1} & \longrightarrow & \on{Ext}_{\mathcal{W}(p)}^{2i+1}(X_s^+, X_s^-).
\end{array}\right.
\end{align*}
Using the dimension of the summands of $\on{Ext}^*_{\mathcal{W}(p)}(X_{\mathcal{W}(p)}, X_{\mathcal{W}(p)})$ and the above computations, each of these maps is an isomorphism. Finally, for each $1 \leq s \leq p$, the element $e_s^{\pm}$ is sent to $e^\pm_s \in \on{Ext}_{\mathcal{W}(p)}^0(X_s^\pm, X_s^\pm)$. It follows that the homomorphism $kQ_{\mathcal{W}(p)}/\mathcal{I}_{\mathcal{W}(p)} \longrightarrow \on{Ext}_{\mathcal{W}(p)}^*(X_{\mathcal{W}(p)}, X_{\mathcal{W}(p)})$ is an isomorphism on homogeneous components and preserves the product, so it is an isomorphism of algebras. As $\mathcal{I}_{\mathcal{W}(p)}$ is generated by homogeneous quadratic relations in $kQ_{\mathcal{W}(p)}$, the Ext algebra is indeed generated in degree $0$ and $1$ with relations in degree $2$.
\end{proof}}


\begin{proof}[Proof of Proposition~\ref{ExtRtilde}]
We write $\mathcal{P}(m)$ for the first statement of the proposition for a fixed $m \in \mathbb{N}^*$. We have just seen that $\mathcal{P}(1),\mathcal{P}(2)$ and $\mathcal{P}(3)$ are true. We will proceed by induction.

$\bullet$ \underline{odd case:} Assume $\mathcal{P}(m)$ is true up to some $m$ odd. The basis of $Q_{m+1}$ is composed of the following elements: $s_1^{b_1} \dots s_8^{b_8}$, $t_{i_1} t_{i_2}s_1^{b_1} \dots s_8^{b_8}$, $t_{i_1} t_{i_2} t_{i_3} t_{i_4}s_1^{b_1} \dots s_8^{b_8}$, $t_{i_1} \dots t_{i_6}s_1^{b_1} \dots s_8^{b_8}$, $t_1 \dots t_8 s_1^{b_1} \dots s_8^{b_8}$, where for each element the sum of the degrees is $m+1$. 

For $t_{i_1} t_{i_2}s_1^{b_1} \dots s_8^{b_8} \in Q_{m+1}$, we define $X=t_{i_1}s_1^{b_1} \dots s_8^{b_8} \in Q_{m}$. By the induction hypothesis, it corresponds to $f_X=a\alpha_{i_1}(\alpha_1^2)^{b_1} \dots (-\alpha_8^2)^{b_8}$, $a \in \cc^*$. We now lift this homomorphism:
 \begin{center}
  \begin{tikzpicture}[scale=0.9,  transform shape]
  \tikzset{>=stealth}
  
\node (1) at ( 0,0){$Q_{m+1}$};
\node (2) at ( 3,0){$Q_m$};
\node (3) at ( 0,-2){$Q_1$};
\node (4) at ( 3,-2) {$\widetilde{R}$};
\node (5) at ( 5,-2){$\cc$};
\node (6) at ( 7,-2){0};

\node (7) at ( -2,0){$\dots$};
\node (8) at ( -2,-2){$\dots$};

\draw [decoration={markings,mark=at position 1 with
    {\arrow[scale=1.2,>=stealth]{>}}},postaction={decorate}] (1) --  (2) node[midway, above] {$\partial_{m+1}$};
\draw [decoration={markings,mark=at position 1 with
    {\arrow[scale=1.2,>=stealth]{>}}},postaction={decorate}] (1)  --  (3) node[midway, left] {$g_2$};

\draw [decoration={markings,mark=at position 1 with
    {\arrow[scale=1.2,>=stealth]{>}}},postaction={decorate}] (2)  --  (4) node[midway, left] {$g_1$};
\draw [decoration={markings,mark=at position 1 with
    {\arrow[scale=1.2,>=stealth]{>}}},postaction={decorate}] (3)  --  (4) node[midway, above] {$\partial_1$};

\draw [decoration={markings,mark=at position 1 with
    {\arrow[scale=1.2,>=stealth]{>}}},postaction={decorate}] (2)  --  (5) node[midway, above right] {$f_X$};
\draw [decoration={markings,mark=at position 1 with
    {\arrow[scale=1.2,>=stealth]{>}}},postaction={decorate}] (4)  --  (5) node[midway, above] {$\epsilon$};
    
\draw [decoration={markings,mark=at position 1 with
    {\arrow[scale=1.2,>=stealth]{>}}},postaction={decorate}] (5)  --  (6);
    
\draw [decoration={markings,mark=at position 1 with
    {\arrow[scale=1.2,>=stealth]{>}}},postaction={decorate}] (7)  --  (1) node[midway, above] {$\partial_{m+2}$};
\draw [decoration={markings,mark=at position 1 with
    {\arrow[scale=1.2,>=stealth]{>}}},postaction={decorate}] (8)  --  (3) node[midway, above] {$\partial_2$};
\end{tikzpicture}
 \end{center}
By defining $g_1:  Q_m  \longrightarrow  \cc$ such that it sends $X$ to $1$ and everything else is sent to zero, we can check that for all $i_2 \neq i_1$ we have $g_1 \partial_{m+1}(t_{i_1} t_{i_2}s_1^{b_1} \dots s_8^{b_8})=\partial_1(-t_{i_2})$. Furthermore, by looking at the different possible values for $i_1$, we can compute $g_1 \partial_{m+1}(s_1^{b_1} \dots s_j^{b_j+1} \dots s_8^{b_8})$ for all $j$. By a case by case analysis depending on $i_1$, we determine $g_2$ and see that:
\begin{itemize}
\item $t_{i_1}t_{i_2}s_1^{b_1} \dots s_8^{b_8}$ corresponds to $a\alpha_{i_1}\alpha_{i_2}(\alpha_1^2)^{b_1} \dots (-\alpha_8^2)^{b_8}$,
\item $s_1^{b_1+1} \dots s_8^{b_8}$  corresponds to $\frac{a}{b_1+1}(\alpha_1^2)^{b_1+1} \dots (-\alpha_8^2)^{b_8}$, 
\item $s_1^{b_1}s_2^{b_2+1} \dots s_8^{b_8}$  corresponds to $\frac{a}{b_2+1}(\alpha_1^2)^{b_1}(\alpha_2^2)^{b_2+1} \dots (-\alpha_8^2)^{b_8}$, 
\item $s_1^{b_1} \dots s_{i_1}^{b_{i_1}+1} \dots s_8^{b_8}$ corresponds to $\frac{a}{b_{i_1}+1}(\alpha_1^2)^{b_1} \dots(-\alpha_{i_1}^2)^{b_{i_1}+1} \dots (-\alpha_8^2)^{b_8}, \ i_1=5,6,7,8$.
\end{itemize}
Thus $\mathcal{P}(m+1)$ is true for the elements above. If one starts this procedure with $t_{i_1} t_{i_2} t_{i_3} t_{i_4}s_1^{b_1} \dots s_8^{b_8}$, $t_{i_1} \dots t_{i_6}s_1^{b_1} \dots s_8^{b_8}$ or $t_1 \dots t_8 s_1^{b_1} \dots s_8^{b_8}$, then one obtains that $\mathcal{P}(m+1)$ is true for these elements. It remains to check that $\mathcal{P}(m+1)$ is true for $s_1^{b_1} \dots s_8^{b_8} \in Q_{m+1}$. As one of the $b_i$ is at least $1$, we have $s_1^{b_1} \dots s_i^{b_i-1} \dots s_8^{b_8} \in Q_{m-1}$ which,  according to the induction hypothesis, corresponds to $a(\alpha_1^2)^{b_1} \dots \beta_i^{b_i-1} \dots (-\alpha_8^2)^{b_8}$ where $\beta_i$ is the dual of $s_i$. By lifting this homomorphism twice, we can verify that $\mathcal{P}(m+1)$ is true for $s_1^{b_1} \dots s_8^{b_8}$. Thus $\mathcal{P}(m+1)$ is true for every element of the basis of $Q_{m+1}$.

$\bullet$ \underline{even case:} Assume $\mathcal{P}(m)$ is true up to some $m$ even. The basis of $Q_{m+1}$ is composed of the following elements: $t_{i_1} s_1^{b_1} \dots s_8^{b_8}$, $t_{i_1} t_{i_2} t_{i_3}s_1^{b_1} \dots s_8^{b_8}$, $t_{i_1} \dots t_{i_5}s_1^{b_1} \dots s_8^{b_8}$, $t_{i_1} \dots t_{i_7}s_1^{b_1} \dots s_8^{b_8}$, where for each element the sum of the degrees is $m+1$. By doing the same procedure as above, we can verify that $\mathcal{P}(m+1)$ is true for every element of the basis of $Q_{m+1}$. Hence $\mathcal{P}(m+1)$ is true for $m$ even.

So we proved that the property $\mathcal{P}(m)$ is hereditary. As it is true for $m=1,2,3$, it is true for all $m \in \mathbb{N}^*$. This settles the first statement. Using this first statement and the commutation relations between the $\alpha_i$'s and $\beta_j$'s we found earlier, we obtain a presentation of $H^*(\widetilde{R})$.
\end{proof}

\begin{proof}[Proof of Proposition~\ref{MonomialContainingTheDual}]
 Let us call $\mathcal{P}(m)$ the first statement of the proposition for a fixed $m \in \mathbb{N}^*$. We already know that $\mathcal{P}(1)$ and $\mathcal{P}(2)$ are true.
 
$\bullet$ \underline{even case:} Assume $\mathcal{P}(m)$ is true up to some $m$ even. Take $Y=T_1^{a_1}\dots T_4^{a_4}S_1^{b_1}\dots S_8^{b_8} \in P_{m+1}$. As $m$ is even, the possibilities are $Y=T_iS_1^{b_1}\dots S_8^{b_8}$ or $Y=T_{i_1}T_{i_2} T_{i_3}S_1^{b_1}\dots S_8^{b_8}$. 

Set $Y=T_i S_1^{b_1}\dots S_8^{b_8} \in P_{m+1}$. We have $S_1^{b_1}\dots S_8^{b_8} \in P_{m}$, and by the induction hypothesis it corresponds to $a(\delta_1^{b_1} \dots \delta_8^{b_8}+z)$ for some $z \in \text{Ker}(\pi^{\#})$, $a \in \cc^*$. We now lift this homomorphism. Define $g_1 \in \text{Hom}_{R}(Q_m,R)$ such that $g_1(S_1^{b_1}\dots S_8^{b_8})=1$, $g_1(\text{other})=0$. Then $g_1 \partial_{m+1}(T_j S_1^{b_1}\dots S_8^{b_8})=\partial_1(T_j)$. There might be other $X \in P_{m+1}$ such that $S_1^{b_1}\dots S_8^{b_8} \in \partial_{m+1}(X)$, but they would contain variables different from $T_i$ ($1 \leq i \leq 4$), $S_j$ ($1 \leq j \leq 8$, $j \neq 3,4$). We can thus define:
\begin{align*}
\begin{array}[t]{cccl}
g_2: & P_{m+1} & \longrightarrow & \cc \\
           & T_j S_1^{b_1}\dots S_8^{b_8}  & \longmapsto & T_j   \ \forall \ 1 \leq j \leq 4, \\
           & X & \longmapsto & g_2(X), \\
           & \text{other} & \longmapsto & 0. \\
\end{array} 
\end{align*}
It follows that $a\gamma_j(\delta_1^{b_1} \dots \delta_8^{b_8}+z)=(T_j S_1^{b_1}\dots S_8^{b_8})^*+\sum_X X^*$, where the sum is taken over a subset (or the whole of) the $X$'s above. Therefore each monomial in this sum contains a variable different from $T_i, S_j$.

The kernel of $\pi^{\#}$ is an ideal of $H^{*}(R)$ so $\gamma_j z \in \text{Ker}(\pi^{\#})$. Because each $X$ in $\sum_X X^*$ is composed of monomials with a variable different from $T_i, S_j$, and the fact that $\on{Im}(\psi)$ is generated by the $T_i, S_j$ (Proposition~\ref{LiftPsi}), we know that $\sum_X X^* \in \text{Ker}(\pi^{\#})$.

By taking $j=i$ we obtain that:
\begin{align*}
(T_i S_1^{b_1}\dots S_8^{b_8})^*= a(\gamma_i \delta_1^{b_1}\dots \delta_8^{b_8}+z')
\end{align*}
where $z'=\gamma_i z-\frac{1}{a}\sum_X X^* \in \text{Ker}(\pi^{\#})$. 

We repeat this process for the elements $Y=T_{i_1}T_{i_2} T_{i_3}S_1^{b_1}\dots S_8^{b_8}$ and see that $\mathcal{P}(m+1)$ is true.

$\bullet$ \underline{odd case:} Assume $\mathcal{P}(m)$ is true up to some $m$ odd. A monomial $Y \in P_{m+1}$ is of the form $ S_1^{b_1}\dots S_8^{b_8}$, $T_{i_1}T_{i_2}S_1^{b_1}\dots S_8^{b_8}$, or $T_1 T_2 T_3 T_4 S_1^{b_1}\dots S_8^{b_8}$. For the last two expressions, we can apply the same procedure as above and we see that $\mathcal{P}(m+1)$ is true for these elements. 

Set $Y=S_1^{b_1}\dots S_8^{b_8} \in P_{m+1}$. There exists $1 \leq j \leq 8$ such that $b_j \geq 1$. Using the assumption the $\mathcal{P}(m)$ is true up to $m$, we know that there exist $a \in \cc^*$ and $z \in \text{Ker}(\pi^{\#})$ such that $a(\delta_1^{b_1}\dots \delta_j^{b_j-1} \dots \delta_8^{b_8}+z)$ corresponds to $S_1^{b_1}\dots S_j^{b_j-1} \dots  S_8^{b_8} \in P_{m-1}$. By lifting the homomorphism and computing $g_1$, $g_2$ and $g_3$, we find that:
\begin{align*}
\setlength{\arraycolsep}{0.3cm}
\resizebox{\hsize}{!}{ $\begin{array}{cc}
\setlength{\arraycolsep}{6pt}\begin{array}[t]{cccl}
g_1: & P_{m-1} & \longrightarrow & R\\
           & S_1^{b_1}\dots S_j^{b_j-1} \dots  S_8^{b_8} & \longmapsto & 1,\\
           & \text{other} & \longmapsto & 0 \\
\end{array}  &  \setlength{\arraycolsep}{6pt}\begin{array}[t]{cccl}
g_2: & P_{m} & \longrightarrow & P_1 \\
           & T_iS_1^{b_1}\dots S_j^{b_j-1} \dots  S_8^{b_8} & \longmapsto & T_i \ \forall 1 \leq i \leq 4,  \\
           & X & \longmapsto & g_2(X), \\
           & \text{other} & \longmapsto & 0 \\
\end{array}
\end{array}$}
\end{align*}
\begin{align*}
\begin{array}[t]{cccl}
g_3: & P_{m+1} & \longrightarrow & P_2 \\
           & T_iT_kS_1^{b_1}\dots S_j^{b_j-1} \dots  S_8^{b_8} & \longmapsto & T_iT_k \ \forall 1 \leq i < k \leq 4,  \\
           & S_1^{b_1} \dots S_i^{b_i+1}\dots S_j^{b_j-1} \dots  S_8^{b_8} & \longmapsto & (b_i+1)S_i \ \forall  i \neq j, \\
           &S_1^{b_1} \dots S_j^{b_j} \dots  S_8^{b_8} & \longmapsto & b_j S_j, \\
           & Y & \longmapsto & g_3(Y), \\
           & W & \longmapsto & g_3(W), \\
           & \text{other} & \longmapsto & 0, \\
\end{array} 
\end{align*}
where the $X$'s are monomials of $P_m$ such that $\partial_m(X)$ contains $S_1^{b_1}\dots S_j^{b_j-1} \dots  S_8^{b_8}$, where the $Y$'s and $W$'s are monomials of $P_{m+1}$ such that $\partial_{m+1}(Y)$ contains $T_iS_1^{b_1}\dots S_j^{b_j-1} \dots  S_8^{b_8}$ for some $1 \leq i \leq 4$, and such that $\partial_{m+1}(W)$ contains one of the $X$'s in the definition of $g_2$. We see that in all cases, $X$, $Y$ and $W$ contain at least one variable different from $T_i, S_j$. We have thus found:
\begin{align*}
\frac{a}{b_j}\delta_j(\delta_1^{b_1}\dots \delta_j^{b_j-1} \dots \delta_8^{b_8}+z)=(S_1^{b_1}\dots S_j^{b_j} \dots S_8^{b_8})^*+\sum_Y Y^* +\sum_W W^*,
\end{align*}
where the sums are taken over subsets (or the whole of) the $Y$'s and $W$'s above. Therefore each monomial in these sums contains a variable different from $T_i, S_j$. 

We know that the $\delta_i$'s commute modulo $\text{Ker}(\pi^{\#})$, so there exists $z' \in \text{Ker}(\pi^{\#})$ such that $\delta_j\delta_1^{b_1}\dots \delta_j^{b_j-1} \dots \delta_8^{b_8} =\delta_1^{b_1}\dots \delta_j^{b_j} \dots \delta_k^{b_k} +z'$. Furthermore, because each $Y$ and $W$ in $\sum_Y Y^* +\sum_W W^*$ is composed of monomials with at least one variable different from $T_i, S_j$, we know that $\sum_Y Y^* +\sum_W W^* \in \text{Ker}(\pi^{\#})$. As a consequence:
\begin{align*}
(S_1^{b_1}\dots S_j^{b_j} \dots S_8^{b_8})^*= \frac{a}{b_j}(\delta_1^{b_1}\dots \delta_j^{b_j} \dots \delta_8^{b_8}+z'')
\end{align*}
where $z''=z'+\delta_j z-\frac{b_j}{a}(\sum_Y Y^*+\sum_W W^*) \in \text{Ker}(\pi^{\#})$, so $\mathcal{P}(m+1)$ is true.

We have found that $\mathcal{P}$ is hereditary and is true for $m=1,2$. Therefore $\mathcal{P}(m)$ is true for all $m \in \mathbb{N}^*$. 
\end{proof}

\section*{Acknowledgements}
The authors would like thank Antun Milas for providing detailed information and comments on some references. They also express their gratitude to the reviewer for bringing to their attention some literature on the restricted quantum group as well as providing detailed comments helping to improve the paper.
 
 A. Caradot was supported by Shanghai Jiao Tong University as a postdoctoral researcher.

\setlength\bibitemsep{7pt}

\end{document}